\newcommand{\R}{{\mathbb R}}
\newcommand{\pR}{{\mathbb{R}_+^{n+1}}}
\newcommand{\oR}{{\mathbb{R}^{n}}}
\newcommand{\cA}{{\mathcal A}}
\newcommand{\nablas}{{\nabla_s^*}}
\theoremstyle{plain}
\newtheorem{theorem}{Theorem}[section]
\newtheorem{remark}[theorem]{Remark}
\newtheorem{definition}[theorem]{Definition}
\newtheorem{corollary}[theorem]{Corollary}
\newtheorem{lemma}[theorem]{Lemma}
\newtheorem{proposition}[theorem]{Proposition}
\numberwithin{equation}{section}
\title{Liouville-Type Theorems on the Hyperbolic Space}
\author{Sanghoon Lee }
\address{School of Mathematics, Korea Institute For Advanced Study, 85 Hoegiro, Dongdaemungu, Seoul, 02455, South Korea }
\email{sl29@kias.re.kr}
\begin{document}
\let\thefootnote\relax\footnotetext{This work was supported by KIAS Individual Grant (Grant No. MG096101).}


\maketitle
\begin{abstract}

In this paper, we establish Liouville-type theorems for a one-parameter family of elliptic PDEs on the standard upper half-plane model of the hyperbolic space, under specific geometric assumptions. Our results indicate that the Euclidean half-plane is the only compactification of the hyperbolic space when the scalar curvature of the compactified metric has a designated sign.

\smallskip
\noindent \textbf{Keywords.} conformally compact Einstein manifold, Liouville-type theorem, subharmonic function
\end{abstract}

\section{Introduction}
\subsection{Background and setup}
Let $X^{n+1}$ be a  smooth $(n+1)$-dimensional manifold with boundary $\partial X = M$. The interior of $X$ is denoted as $\mathring{X}$. Consider  a smooth boundary defining function $\rho$ that satisfies the following properties: 
\begin{equation*}
0 \le \rho \in C^\infty (X),\, \rho>0 \text{ in } \mathring{X},\, \rho = 0 \text{ on } M,\, D\rho \ne 0 \text{ on } M.
\end{equation*}
by definition.
Given a conformal class of Riemannian metrics $[h]$, we define $(X, \partial X, g_+)$ as a conformally compact Einstein (CCE) manifold  with conformal infinity $(M, [h])$ if $g_+$ is a complete metric on $\mathring{X}$ such that
\begin{equation*}
\begin{cases}
Ric_{g_+} = - n g_+ \\
(\rho^2g_+ )|_{M} \in [h].
\end{cases}
\end{equation*}
for a defining function $\rho$.

The adapted compactification of the metric $g_+$, denoted as $g_s^* \coloneqq \rho_s^2 g_+$, where $s$ is a real parameter,  is   commonly used in the study of CCE manifolds. The conformal factor $\rho_s$ was introduced by Case and Chang in \cite{CC} and is derived by applying scattering theory. For the range $\frac{n}{2} < s \le n+1$ and $s \ne n$, the conformal factor $\rho_s$ is defined as $\rho_s \coloneqq v_s^{\frac{1}{n-s}}$ where $v_s$ is a positive function that satisfies an elliptic PDE with Dirichlet boundary data:
\begin{equation*}
\begin{cases}\label{eq:cce}
-\Delta_+ v_s - s(n-s)v_s = 0  \text{ on } \mathring{X} \\
(\rho_s ^2g_+)|_{M} = (v_s^{\frac{2}{n-s}} g_+ )|_M = h \text{ on } M.
\end{cases}
\end{equation*}


In recent decades, numerous studies have been dedicated to addressing geometric problems of CCE manifolds, including issues related to existence and uniqueness. For more information on these topics, readers are referred to works such as \cite{A, BH, CDLS, CLW, DM, FG2, GH, GHS, GL, GS, LSQ}, to name just a few. This article is particularly relevant to the recent compactness results achieved by Chang and Ge \cite{CG}, Chang-Ge-Qing \cite{CGQ}, and Chang-Ge-Jin-Qing \cite{CGJQ}.

To control the curvature of adapted metrics near the boundary for a sequence of CCE manifolds, sophisticated blow-up analysis techniques are employed. This process results in a scenario involving compactification on the standard upper half-plane model of hyperbolic space, where the boundary has the standard Euclidean metric. In this setting, the limiting conformal factor, defined on the upper half-plane and providing such compactification, satisfies a PDE resembling equation (\ref{eq:cce}) with appropriate boundary conditions.  Thus, establishing Liouville-type theorems for these PDEs becomes an essential step towards achieving compactness results, as well as in gaining a deeper understanding of the boundary geometry of CCE manifolds.

We note that the specific case when $s = \frac{n+3}{2}$ has been previously addressed in the work of Chang and Ge \cite{CG} by using a Liouville-type theorem for harmonic functions.

Our goal is to prove Liouville-type theorems for adapted metrics on the upper half-plane model of the standard hyperbolic space $(\pR, \oR, g_+)$ for a broader range of the parameter $s$, while specifying the sign of the scalar curvature. We denote 
$\R_+^{n+1} \coloneqq \{ z= (x, y) \in \R^{n+1} | x \in \R^n, y>0 \}$, $\oR \coloneqq \{ z= (x, 0) \in \R^{n+1} | x \in \R^n \}$ and $g_+ \coloneqq \frac{1}{y^2}|dz|^2$. Since scattering theory does not apply in the non-compact manifolds case, the adapted metric  for $(\pR, \oR, g_+)$ is defined by $g_s^* \coloneqq \rho_s^2 g_+$, where $\rho_s \coloneqq v_s^{\frac{1}{n-s}}$ for a positive solution $v_s$ of the following PDE:
\begin{equation}
\begin{cases}
-\Delta_+ v_s - s(n-s)v_s = 0 \text{ in } \pR \\
\lim_{y \rightarrow 0} \frac{v_s}{y^{n-s}} = 1 \text{ on } \oR \iff g_s^*|_{\oR} \equiv |dx|^2.
\end{cases}
\end{equation}
We aim to prove that $g_s^* \equiv |dz|^2$, assuming a certain sign for the scalar curvature of the metric $g_s^*$. This assumption about the scalar curvature's sign is supported by the observed behavior of scalar curvature in CCE manifolds when their conformal infinities have positive scalar curvature. It is important to note that in the absence of this assumption on scalar curvature, $v_s = y^{n-s} + \beta y^s$ could represent other solutions for any positive real number $\beta$.

Let us define $s \coloneqq \frac{n}{2} + \gamma$, and $a \coloneqq 1-2\gamma$ for $0<\gamma < \frac{n}{2}+1$.  We then define $w_s \coloneqq 1-\frac{v_s}{y^{n-s}}$ which leads to the expression $v_s = y^{n-s}(1-w_s)$. The equation $-\Delta_+ v_s - s(n-s)v_s = 0$ on $(\pR, \oR, g_+)$ can be rewritten with respect to the Euclidean metric by using the conformal translation rule for the Laplacian. This transformation yields the following equation:

\begin{equation} \label{eq:degenerate1}
\Delta_a w_s = 0
\end{equation}
where the degenerate elliptic operator $\Delta_a$ is defined as $\Delta_a u \coloneqq \mathrm{div} ( y^a \nabla u)$ for a function $u$. We assume that $w_s \equiv 0$ on $\oR$, which is equivalent to stating that $g_s^* \coloneqq \rho_s^2 g_+= v_s^{\frac{2}{n-s}} g_+$ satisfies $g_s|_{\oR} \equiv |dx|^2$. With this boundary condition and an assumption regarding the sign of the scalar curvature, we prove Liouville-type theorems for the PDE (\ref{eq:degenerate1}).

\subsection{Main results}

The first main result of this article examines the sign of the scalar curvature of adapted metrics of CCE manifolds in the case $\gamma \in (0, 1)$. We prove that the sign of the scalar curvature is the same as the sign of $\gamma -\frac{1}{2}$ when the conformal infinity has positive scalar curvature.  Previous work in \cite{CC} demonstrated that the scalar curvature of the adapted metric is positive  for $\gamma \in [1, \frac{n}{2}+1]$ if the scalar curvature of the conformal infinity is positive. Our result extends this finding to $\gamma \in (0, \frac{n}{2}+1]$. We denote the covariant derivative and scalar curvature of the adapted metric $g_s^*$  by $\nablas$ and $R_s^*$.

\begin{theorem} \label{mainthm2.1}
Let $(X^{n+1}, \partial X = M , g_+)$ be a CCE manifold with conformal infinity $(M, [h])$. Assume that the scalar curvature $R_h$ of $(M, h)$ is positive. For $0<\gamma<1$, it follows that $|\nablas \rho_s|^2 < 1$ on $\mathring{X}$, or equivalently, $\mathrm{sgn}(R_s^*) = \mathrm{sgn}(\gamma - \frac{1}{2})$.
\end{theorem}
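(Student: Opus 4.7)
The plan is to reduce the theorem to a pointwise gradient bound, analyze the bound near the boundary using the scattering expansion of $v_s$, and then globalize it via a maximum-principle argument in the interior.

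\emph{Reduction via the scalar-curvature identity.} Applying the conformal transformation formula for scalar curvature to $g_s^* = \rho_s^2 g_+$, together with $R_{g_+} = -n(n+1)$ (since $g_+$ is Einstein) and the equation $\rho_s\Delta_{g_+}\rho_s + (n-s-1)|\nabla^{g_+}\rho_s|^2_{g_+} + s\rho_s^2 = 0$ (obtained by rewriting $-\Delta_{g_+} v_s = s(n-s)v_s$ in terms of $\rho_s = v_s^{1/(n-s)}$), I eliminate the Laplacian of $\rho_s$ and convert $|\nabla^{g_+}\rho_s|^2_{g_+} = \rho_s^2|\nablas\rho_s|^2_{g_s^*}$. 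The calculation simplifies to the pointwise identity
\[
R_{g_s^*} = \frac{n(2\gamma-1)}{\rho_s^2}\bigl(1 - |\nablas\rho_s|^2_{g_s^*}\bigr) \qquad \text{on } \mathring X.
\]
This identity makes the stated equivalence immediate, so it suffices to prove $|\nablas\rho_s|^2_{g_s^*} < 1$ on $\mathring X$.

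\emph{Boundary analysis.} The Graham--Zworski scattering expansion for $v_s$ in a geodesic normal collar near $M$ takes the form $v_s = r^{n-s}(1 + O(r^2)) + S(s)[1]\,r^s(1 + O(r^2))$, and for $\gamma \in (0,1)$ the leading correction comes from the $r^s$ term since $2\gamma < 2$. Substituting $\rho_s = v_s^{1/(n-s)}$ and expanding both $(\partial_r\rho_s)^2$ and the conformal factor $(\rho_s/r)^2$ appearing in $g_s^* = (\rho_s/r)^2(dr^2 + h_r)$, I obtain
\[
|\nablas\rho_s|^2_{g_s^*} = 1 + \frac{4\gamma}{n-s}\,S(s)[1]\,r^{2\gamma} + \text{h.o.t.}
\]
The scattering coefficient $S(s)[1]$ is related to the fractional $Q$-curvature of $h$ by the Graham--Zworski identity $Q_{2\gamma}^h = d_\gamma\,S(s)[1]$ with $d_\gamma = 2^{2\gamma}\Gamma(\gamma)/\Gamma(-\gamma) < 0$ for $\gamma \in (0,1)$. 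The positivity of $Q_{2\gamma}^h$ under the assumption $R_h > 0$ in the range $\gamma \in (0,1)$ then forces $S(s)[1] < 0$, so $|\nablas\rho_s|^2_{g_s^*} < 1$ in an open collar of $M$.

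\emph{Global extension and main obstacle.} Extending the strict inequality from the collar to the entirety of $\mathring X$ is the main technical step. The plan is to combine Bochner's identity for $\Phi := |\nablas\rho_s|^2_{g_s^*}$ with the auxiliary identity $\Delta^{g_s^*}\rho_s = s(\Phi-1)/\rho_s$ (derived by applying the conformal Laplacian formula to $\rho_s$ together with the scattering PDE) and the conformal formula for $\mathrm{Ric}^{g_s^*}$ coming from the Einstein condition on $g_+$. This produces a linear elliptic equation for $F := 1 - \Phi$ of the schematic form
\[
\Delta^{g_s^*} F - \frac{2s}{\rho_s}\langle \nabla^{g_s^*}\rho_s, \nabla^{g_s^*} F\rangle_{g_s^*} + \frac{2s\Phi}{\rho_s^2}\,F = -2\bigl|\nabla^{g_s^*,2}\rho_s\bigr|^2_{g_s^*} - 2\,\mathrm{Ric}^{g_s^*}(\nabla^{g_s^*}\rho_s, \nabla^{g_s^*}\rho_s).
\]
The main difficulty is to extract a sign-definite elliptic inequality for $F$ from this formula: while the Hessian term is manifestly non-negative, the Ricci term must be re-expressed using the conformal change formula and the equation for $\rho_s$ before its sign becomes tractable, and it is here that the restriction $\gamma \in (0,1)$ enters decisively. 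Once the correct sign is in place, the boundary data $F|_M = 0$, the collar positivity from the previous step, and the Hopf lemma combine via the strong maximum principle to rule out interior non-positive minima of $F$, yielding $F > 0$ throughout $\mathring X$ and completing the proof.
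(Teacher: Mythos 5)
Your reduction to the pointwise inequality $|\nablas\rho_s|^2 < 1$, your boundary expansion via the scattering matrix and the positivity of $Q_{2\gamma}$, and your plan to propagate strict positivity of $F := 1 - |\nablas\rho_s|^2$ into the interior via a Bochner/maximum-principle computation all track the paper's proof closely. However, there is a real gap in the global step, and a structural ingredient you have not supplied.

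After eliminating the Ricci term using the Einstein condition and $\Delta_s^*\rho_s = -sF/\rho_s$, the equation you are aiming for is (this is the paper's equation for $f_s = F$)
\[
\Delta_s^* F - (2s-n+1)\,\frac{\nablas\rho_s}{\rho_s}\cdot\nablas F + 2(2s-n)\frac{|\nablas\rho_s|^2}{\rho_s^2}\,F = -2|\nabla_s^{*\,2}\rho_s|^2 \le 0,
\]
and the zeroth-order coefficient here is $c = 2(2s-n)|\nablas\rho_s|^2/\rho_s^2 > 0$ since $2s-n = 2\gamma > 0$ (your schematic form shows the same sign, $+2s\Phi/\rho_s^2 > 0$). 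This sign is exactly the wrong one for what you want. With $c>0$, the strong minimum principle cannot exclude an interior \emph{negative} minimum of $F$: if $F(p) < 0$ at an interior point $p$, the term $cF$ is negative there and you cannot conclude $(\Delta_s^* - b\cdot\nablas)F \le 0$ near $p$. The inequality does control the case $F \ge 0$, $F(p) = 0$, because then $cF \ge 0$ can be absorbed onto the right, but it gives no a priori reason why $F$ should be $\ge 0$ in the first place. The Hopf lemma does not rescue this: $F \sim c_0\,y^{2\gamma}$ vanishes identically on $M$, is not $C^1$ up to $M$ when $\gamma < 1/2$, and the boundary is not where an offending minimum would sit anyway.

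The paper bridges this gap with a continuity argument in $s$ that you do not have. It works with the renormalized quantity $f_s/\rho_s^{2\gamma}$, which by the scattering expansion is \emph{strictly positive on $M$} (not merely near $M$), so positivity up to the closed boundary is a nondegenerate condition. For the anchor point $\gamma = 1/2$, positivity is supplied by Lemma~\ref{lemma:wz} (Wang--Zhou). Continuity of $s \mapsto v_s$ gives openness of $\{s : f_s/\rho_s^{2\gamma} > 0\}$; for closedness, one assumes $f_s/\rho_s^{2\gamma} \ge 0$ with an interior zero and applies Serrin's strong maximum principle on $X\setminus\{y<\epsilon\}$ to the equation
\[
\Delta_s^*\Big(\frac{f_s}{\rho_s^{2\gamma}}\Big) = -\frac{2|\nabla_s^{*\,2}\rho_s|^2}{\rho_s^{2\gamma}} + (1-2\gamma)\,\nablas\Big(\frac{f_s}{\rho_s^{2\gamma}}\Big)\cdot\frac{\nablas\rho_s}{\rho_s} + 2\gamma s\,\frac{f_s^2}{\rho_s^{2\gamma+2}},
\]
where at an interior zero the last term is quadratic (not merely linear) in the vanishing quantity, so it does not spoil the argument. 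This combination --- the renormalization so that the boundary value is strictly positive, plus the continuity-in-$s$ method seeded at $\gamma=1/2$ --- is the missing idea. Without it, ``collar positivity $+$ max principle'' does not close, precisely because of the sign of $c$.

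As a smaller point, you should also record that $X$ is compact with boundary, so that the extrema you appeal to are actually achieved; and you have left the derivation of the final pointwise PDE for $F$ (elimination of the Ricci term using the Einstein condition together with the identities for $\Delta_s^*\rho_s$ and for the traceless part of $\nabla_s^{*\,2}\rho_s$) as a declared intention rather than a computation --- this is Lemma~\ref{lemma:formulas}, quoted from Wang--Zhou, in the paper.
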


The aforementioned theorem validates the assumption $|\nablas \rho_s | \le 1$ for the adapted metrics $g_s^*$ on  $(\pR, \oR, g_+)$, and the sign of $R_s^*$ is deduced from the identity $R_s^*  = 2n(s- \frac{n+1}{2}) \cdot \frac{(1-|\nablas \rho_s|^2)}{\rho_s^2}$. Assuming  $|\nablas \rho_s | \le 1$, we have the following Liouville-type theorem for $\gamma \in (0,1)$  which is a consequence of the work by Sire, Terracini, and Vita \cite[Theorem 3.1]{STV2}.
\begin{theorem} \label{mainthm2.2}
Let $\gamma \in (0,1)$. Suppose $v_s$ is a positive solution to the PDE $-\Delta_+ v_s -s(n-s)v_s = 0$ on $\pR$.   Define $w_s \coloneqq 1-\frac{v_s}{y^{n-s}}$. Assume the following conditions:

(1)  $w_s \in \tilde{H}^{1,a}(\pR)$.

(2) $|\nablas \rho_s | \le 1$. This is equivalent to $\mathrm{sgn}(R^*_s) = \mathrm{sgn}(\gamma - \frac{1}{2})$.

Then, $v_s = y^{n-s}$ or equivalently, $g_s^* \equiv |dz|^2$.
\end{theorem}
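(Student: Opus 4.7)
The plan is to translate hypothesis (2) into a one-sided pointwise bound on $w_s$ and then invoke the Liouville-type theorem of Sire--Terracini--Vita. Setting $u := v_s/y^{n-s} = 1-w_s > 0$ gives $\rho_s = y\, u^{1/(n-s)}$ and $g_s^* = u^{2/(n-s)} |dz|^2$, so a direct Euclidean-gradient computation will yield
\begin{equation*}
|\nablas \rho_s|^2 = 1 + \frac{2y}{(n-s)\,u}\,\partial_y u + \frac{y^2}{(n-s)^2\, u^2}\,|\nabla u|^2.
\end{equation*}
Hypothesis (2), $|\nablas \rho_s|^2 \le 1$, is therefore equivalent (after multiplying by the positive factor $(n-s)^2 u^2/y^2$) to
\begin{equation*}
\frac{2(n-s)\,u}{y}\,\partial_y u + |\nabla u|^2 \le 0 \quad \text{on } \pR.
\end{equation*}
Since $u,y>0$ and $|\nabla u|^2 \ge 0$, this forces $(n-s)\,\partial_y u \le 0$ pointwise in $\pR$. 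Combined with $\lim_{y \to 0} u = 1$, integration in $y$ yields $(n-s)(u-1) \le 0$, equivalently $(n-s)\,w_s \ge 0$ throughout $\pR$, so $w_s$ carries a definite sign determined by the sign of $n-s$.

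With this one-sided bound in hand, I would apply \cite[Theorem 3.1]{STV2} directly to $w_s$. From (\ref{eq:degenerate1}), $w_s$ solves $\Delta_a w_s = 0$ in $\pR$; the boundary normalization $g_s^*|_{\oR} = |dx|^2$ ensures $w_s|_{\oR} = 0$; hypothesis (1) places $w_s$ in $\tilde H^{1,a}(\pR)$; and the first paragraph provides the one-sided bound $(n-s)\,w_s \ge 0$. These are the hypotheses of the cited Liouville theorem, whose conclusion is $w_s \equiv 0$. Substituting back into $v_s = y^{n-s}(1-w_s)$ then gives $v_s = y^{n-s}$, hence $\rho_s = y$ and $g_s^* \equiv |dz|^2$, as claimed.

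The gradient computation in the first paragraph is routine; the main obstacle I anticipate is matching the weighted-Sobolev regularity and the sign information to the precise formulation of \cite[Theorem 3.1]{STV2}. In particular, one must verify that the global $\tilde H^{1,a}$ integrability, together with the one-sided bound $(n-s)\, w_s \ge 0$, rules out the homogeneous scale family $w_s = c\, y^{2\gamma}$ of $\Delta_a$-harmonic solutions with zero trace, which is precisely the source of non-uniqueness of the Dirichlet problem on the half-space in the absence of these assumptions.
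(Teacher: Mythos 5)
Your approach matches the paper's: you redo the argument of its Lemma~\ref{lemma:ydirection} (translating $|\nablas\rho_s|\le 1$ into $\partial_y w_s\ge 0$, hence $w_s\ge 0$), then hand $w_s$ to \cite[Theorem~3.1]{STV2}. The gradient identity you derive is correct, and your route to $\partial_y u\le 0$ (keeping the full equality $|\nablas\rho_s|^2 = 1 + \tfrac{2y}{(n-s)u}\partial_y u + \tfrac{y^2}{(n-s)^2u^2}|\nabla u|^2$ and dropping only $|\nabla u|^2\ge 0$) is essentially the same computation as the paper's, which instead uses $|\nabla\rho_s|\ge\partial_y\rho_s$.

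There is a real gap, though, and your own last paragraph puts a finger on it without closing it. You apply the STV Liouville theorem with only the one-sided bound $(n-s)w_s\ge 0$, but that theorem requires a two-sided growth bound $|w(z)|\le C(1+|z|^d)$ with $d\in[0,1-a)=[0,2\gamma)$. The one-sided bound alone does not satisfy this hypothesis, and it genuinely does not rule out the nontrivial family you worry about: $w=c\,y^{2\gamma}$ ($c>0$) is $\Delta_a$-harmonic, has zero trace, lies in $\tilde H^{1,a}(\pR)$ (since $y^{2\gamma-1}$ is locally $y^a$-integrable for $\gamma>0$), and is nonnegative, so your stated hypotheses do not exclude it. What rules it out is the \emph{upper} bound, and it is sitting in your own computation: $u=v_s/y^{n-s}>0$ gives $w_s=1-u<1$, hence $0\le w_s<1$, which is boundedness ($d=0$) and makes \cite[Theorem~3.1]{STV2} applicable. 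This is precisely the conclusion the paper extracts from its Lemma~\ref{lemma:ydirection}, namely $0\le w_s<1$, before invoking the Liouville theorem; you need to state and use the upper bound explicitly rather than invoke only the one-sided inequality.
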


The precise definition of the functional space $\tilde{H}^{1,a}(\pR)$ is given in the Definition \ref{def:wfunctional}. In essence, $w \in \tilde{H}^{1,a}(\pR)$ if the $y^a$-weighted $W^{1,2}$-Sobolev norm of $w$ is locally bounded and the trace of $w_s$ on $\oR$ is zero.  For $\gamma \in [\frac{1}{2}, \frac{n}{2})$, we prove the following Liouville-type theorem under condition that the scalar curvature of the adapted metric is non-negative.

\begin{theorem} \label{mainthm:1}
Let $s \coloneqq \frac{n}{2} + \gamma$ for some $\gamma \in [\frac{1}{2}, \frac{n}{2})$. Suppose $v_s$ is a positive solution to the PDE $-\Delta_+ v_s -s(n-s)v_s = 0$  on $\pR$. Define $w_s \coloneqq 1-\frac{v_s}{y^{n-s}}$. Assume the following conditions:

(1) $w_s \in \tilde{H}^{1,0}(\pR)$.

(2) $R^*_s $ is non-negative.

Then, $v_s = y^{n-s}$ or equivalently, $g_s^* \equiv |dz|^2$.
\end{theorem}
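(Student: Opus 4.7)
The plan is to combine three ingredients: monotonicity of $w_s$ in $y$ extracted from the scalar curvature hypothesis, a Caccioppoli-type estimate obtained by testing the equation against $w_s\phi^{2}$, and a Hardy-inequality argument exploiting the global $H^{1}$-integrability encoded in $\tilde H^{1,0}(\pR)$. The first step unpacks $R_s^{*}\ge 0$: since $s\ge(n+1)/2$ and $R_s^{*}=2n(s-\tfrac{n+1}{2})(1-|\nablas\rho_s|^{2})/\rho_s^{2}$, the hypothesis is equivalent to $|\nablas\rho_s|^{2}\le 1$; expanding this with $\rho_s=y(1-w_s)^{1/(n-s)}$ and using the conformal structure $g_s^{*}=(\rho_s/y)^{2}|dz|^{2}$, a direct computation produces the pointwise inequality
\[
\frac{y}{n-s}\,|\nabla w_s|^{2}\ \le\ 2(1-w_s)\,\partial_y w_s\qquad\text{on }\pR.
\]
Because $n-s>0$ and $v_s>0$ forces $1-w_s>0$, this yields $\partial_y w_s\ge 0$; together with the vanishing trace on $\oR$, integration in $y$ gives $w_s\ge 0$ on $\pR$.

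Next, rewrite (\ref{eq:degenerate1}) as $\Delta w_s+(a/y)\partial_y w_s=0$ in $\{y>0\}$ with $a=1-2\gamma\le 0$, test against $w_s\phi^{2}$ for a compactly supported smooth cutoff $\phi$, and integrate by parts. The boundary contribution on $\oR$ vanishes by the zero trace of $w_s$ (aided by the decay $w_s\partial_y w_s=o(1)$ at $y=0$ coming from the indicial expansion $w_s\sim c(x)y^{2\gamma}$), and one obtains the identity
\[
\int_{\pR}\phi^{2}|\nabla w_s|^{2}\ +\ 2\int_{\pR}\phi\,w_s\,\nabla w_s\cdot\nabla\phi\ =\ a\int_{\pR}\frac{\phi^{2}\,w_s\,\partial_y w_s}{y}.
\]
The right-hand side is $\le 0$ because $a\le 0$ and $w_s,\,\partial_y w_s\ge 0$; Cauchy--Schwarz on the cross term then gives the Caccioppoli estimate $\int\phi^{2}|\nabla w_s|^{2}\le 4\int w_s^{2}|\nabla\phi|^{2}$.

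For the concluding step, plug in a standard cutoff $\phi_R\equiv 1$ on $B_R$, supported in $B_{2R}$, with $|\nabla\phi_R|\le C/R$ localised on the shell $A_R=B_{2R}\setminus B_R$, and use $y\le 2R$ on $A_R$ to upgrade the bound to
\[
\int_{B_R\cap\pR}|\nabla w_s|^{2}\ \le\ 16C^{2}\int_{A_R\cap\pR}\frac{w_s^{2}}{y^{2}}.
\]
The one-dimensional Hardy inequality applied slicewise in $y$, together with the zero trace of $w_s$ on $\oR$, yields $\int_{\pR}w_s^{2}/y^{2}\le 4\int_{\pR}|\nabla w_s|^{2}<\infty$, the finiteness coming from reading $\tilde H^{1,0}(\pR)$ as a global integrability condition (cf.\ Definition~\ref{def:wfunctional}). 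Dominated convergence then forces the right-hand side above to $0$ as $R\to\infty$, while the left-hand side converges to $\int_{\pR}|\nabla w_s|^{2}$; hence $\nabla w_s\equiv 0$, and the zero trace gives $w_s\equiv 0$, i.e.\ $v_s=y^{n-s}$. The chief obstacle is Step~1, namely tracking the sign computations in the expansion of $|\nablas\rho_s|^{2}\le 1$ to extract a clean pointwise consequence and verifying that the positive factor $1-w_s$ may be divided out everywhere on $\pR$; the remainder is fairly standard once $\tilde H^{1,0}(\pR)$ is interpreted globally.
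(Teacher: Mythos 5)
The first two steps of your proposal are sound and largely parallel the paper's reasoning. Step~1 matches Lemma~\ref{lemma:ydirection}: from $|\nablas\rho_s|\le 1$ one obtains $\partial_y w_s\ge 0$ and hence $0\le w_s\le 1$, and the sign $\gamma\ge\tfrac12$ makes the first-order term in $\Delta w_s=\frac{2\gamma-1}{y}\partial_y w_s$ nonnegative, so that $w_s$ is subharmonic (this is exactly what the paper records before invoking Theorem~\ref{thm:liouville}). The Caccioppoli estimate in Step~2 is also correctly derived.

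The gap is in Step~3, where you invoke the one-dimensional Hardy inequality to conclude $\int_{\pR}w_s^{2}/y^{2}\le 4\int_{\pR}|\nabla w_s|^{2}<\infty$ and then let $R\to\infty$ by dominated convergence. This is circular \emph{and} relies on a misreading of the hypothesis: Definition~\ref{def:wfunctional} of $\tilde H^{1,0}(\pR)$ is a \emph{local} condition (finiteness of $\int_{B_r^{+}}(|\nabla u|^{2}+u^{2})\,dz$ for every fixed $r>0$, together with an approximation property), not a global integrability assumption. There is no hypothesis that $\int_{\pR}|\nabla w_s|^{2}$ is finite, so the dominated convergence step has nothing to dominate, and the argument does not close. (The Caccioppoli inequality alone gives $\int_{B_R}|\nabla w_s|^{2}\lesssim\int_{A_R}w_s^{2}/y^{2}$, and one cannot bound the right-hand side by $\lesssim 1$ using only $0\le w_s\le 1$, since $\int_{A_R}y^{-2}$ diverges near $y=0$.)

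The paper avoids this entirely by proving a Liouville theorem for nonnegative subharmonic functions (Theorem~\ref{thm:liouville}) that works only with the boundedness $0\le w_s\le 1$ and the local integrability implicit in $\tilde H^{1,0}(\pR)$. Testing subharmonicity against the annular cutoff $\eta_{r,\e}w$ and combining with the sharp Poincar\'e inequality on half-balls gives the differential inequality $\bigl(r^{-2}H(r)\bigr)'\ge 0$ for $H(r)=r^{-n}\int_{(\partial B_r)^+}w^2$; since boundedness of $w$ forces $H(r)\le C(r^{2d}+1)$ with $d<1$, letting $r\to\infty$ gives $H\equiv 0$ and hence $w\equiv 0$. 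To repair your proof along your lines you would either need to establish global $L^2$-finiteness of $\nabla w_s$ (which is not given and is not obviously true a priori), or replace the final limiting step by a growth/monotonicity argument such as the one the paper uses.
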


While proving Theorem \ref{mainthm:1}, we found a Liouville-type theorem(refer to Theorem \ref{thm:liouville}) for non-negative subharmonic functions defined on $\overline{\mathbb{R}}_+^{n+1}$, which, to the best of the author's knowledge, was previously unrecognized. This implies that the conformal factor does not necessarily have to be a solution to a PDE, although being a solution can provide certain advantages in terms of establishing regularity properties. We articulate this observation in the following statement:

\begin{theorem}
Let $\rho$ be a positive function defined on $(\pR, g_+ \coloneqq \frac{1}{y^2}|dz|^2)$ with following properties:

(1) $\rho \in C^2(\overline{\mathbb{R}}_+^{n+1})$

(2) $\rho(x, 0) = 0 $ and $\partial_y \rho(x, 0) = \lim_{y\rightarrow 0 }\frac{\rho}{y}(x, y) = 1$, or equivalently, the induced boundary metric on $\oR$ of the compactified manifold $(\overline{\mathbb{R}}_+^{n+1}, g^* \coloneqq \rho^2 g_+)$ is the Euclidean metric.

(3) The scalar curvature of the compactified metric, $R^*$, is non-negative.

(4) $|\nabla^* \rho| \le 1$.

Then, $\rho \equiv y$, or equivalently, $g^* \equiv |dz|^2$.

\end{theorem}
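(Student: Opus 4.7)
The plan is to construct an auxiliary function $u$ on $\pR$ that is non-negative, bounded, Euclidean-subharmonic, and vanishes on $\oR$, so that the Liouville-type statement for subharmonic functions (Theorem \ref{thm:liouville}) forces $u \equiv 0$ and hence $\rho \equiv y$. The natural candidate, suggested by the Yamabe conformal change on an $(n+1)$-manifold with $n\ge 2$, is
\[ u := 1 - (\rho/y)^{(n-1)/2} = 1 - \phi/\phi_0, \]
with $\phi := \rho^{(n-1)/2}$ and reference $\phi_0 := y^{(n-1)/2}$. The boundary condition (2) ensures $\rho/y \to 1$ and hence $u \to 0$ as $y \to 0^+$.

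First I would exploit condition (3) to establish subharmonicity of $u$. Applying the scalar curvature transformation law for $g^* = \phi^{4/(n-1)} g_+$ with $R_{g_+}=-n(n+1)$ gives
\[ R^*\, \phi^{(n+3)/(n-1)} \;=\; -\frac{4n}{n-1}\Delta_+ \phi \;-\; n(n+1)\phi, \]
so $R^* \ge 0$ yields $\Delta_+\phi \le -\frac{n^2-1}{4}\phi$, while a direct computation shows that $\phi_0$ saturates this inequality with equality. Expanding $\Delta_+(f \phi_0) = \Delta_+ \phi$ for $f := \phi/\phi_0$ via Leibniz and substituting both relations produces the cancellation
\[ \phi_0^{-2}\,\mathrm{div}_+(\phi_0^2\, \nabla_+ f) \;\le\; 0. \]
A short calculation in Euclidean coordinates shows that the weight $\phi_0^2 = y^{n-1}$ cancels exactly against the hyperbolic volume factor $y^{-(n+1)}$ hidden in $\mathrm{div}_+$, reducing the inequality to $\Delta f \le 0$ in the Euclidean Laplacian; equivalently $u = 1 - f$ is Euclidean-subharmonic on $\pR$.

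Next I would use condition (4) to force $u \ge 0$. Rewriting $|\nabla^* \rho|^2 \le 1$ as $y^2|\nabla\rho|^2 \le \rho^2$ and passing to $\log f = \tfrac{n-1}{2}(\log\rho - \log y)$ yields, after direct computation,
\[ |\nabla \log f|^2 \;+\; \frac{n-1}{y}\,\partial_y \log f \;\le\; 0, \]
so $\partial_y f \le 0$ everywhere on $\pR$. Combined with $f \to 1$ at $y = 0^+$, integration in $y$ gives $f \le 1$, hence $0 \le u \le 1$. The function $u$ is therefore non-negative, bounded, subharmonic on $\pR$, and vanishes on $\oR$; Theorem \ref{thm:liouville} then delivers $u \equiv 0$, so $\rho \equiv y$ and $g^* \equiv |dz|^2$. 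The main technical hurdle is the algebraic cancellation producing $\Delta f \le 0$ in Euclidean form, which depends crucially on both the choice of exponent $\phi = \rho^{(n-1)/2}$ dictated by the Yamabe equation and on the fact that the reference $\phi_0$ saturates the conformal eigenvalue equation; the use of condition (4) is then confined to the short monotonicity argument that supplies the non-negativity required to invoke the subharmonic Liouville theorem.
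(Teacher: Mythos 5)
Your proof is correct and follows essentially the same route as the paper: both define the auxiliary function $w = (\rho/y)^{(n-1)/2}$ (so that $u = 1-w$), use condition (4) to deduce $\partial_y \log(\rho/y) \le 0$ and hence $0 < w \le 1$, use condition (3) to deduce $\Delta w \le 0$ in the Euclidean Laplacian, and then apply Theorem~\ref{thm:liouville} to $1-w$. Your derivation of $\Delta w \le 0$ via the Yamabe-form scalar curvature transformation for $\phi = \rho^{(n-1)/2}$ and the saturating reference $\phi_0 = y^{(n-1)/2}$ is an equivalent repackaging of the paper's derivation via the conformal factor $e^{2\psi}$ with $\psi = \log(\rho/y)$; both produce the identical inequality.
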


A direct consequence of the above Theorem  is  Liouville-type theorem for adapted metrics with $\gamma \in [\frac{n}{2}, \frac{n}{2}+1]$, including the Fefferman-Graham compactification.

\subsection{Organization of the paper}
We breifly outline the organization of this paper. In Section 2, we revisit some fundamental results from scattering theory on CCE manifolds and cover basic analytic preliminaries for $y^a$-weighted Sobolev spaces. In section 3, we define the weak solutions of a one-parameter family of degenrate PDEs on the upper half-plane and prove a Liouville theorem for non-negative subharmonic functions defined in this domain.

 In Section 4, we explore geometric properties of CCE manifolds with adapted metrics, particularly when $0<\gamma<1$ and the conformal infinity has positive scalar curvature. We then prove the first main theorem, which determines the sign of the scalar curvature of the adapted metric. In Section 5, we establish Liouville-type Theorems for the adapted metrics. Finally, in Section 6, we give an upper bound on the weighted Sobolev norm of the conformal factors that define the adapted metrics.

\section*{Acknowledgement}
The author wishes to express sincere gratitude to Professor Sun-Yung A. Chang for introducing him to the subject and for providing ongoing encouragement. Special thanks are also extended to Professor Yuxin Ge for the enlightening discussions.  


\section{Preliminaries}

In this section, we discuss pertinent facts from geometry and analysis that are essential for the content of this article.
\subsection{Scattering theory on CCE manifolds and adapted metrics}

Given the vast and comprehensive nature of scattering theory, our focus here is limited to aspects that are directly relevant to our research.

This subsection introduces the concept of adapted metrics in the context of scattering theory on CCE manifolds.  For more comprehensive discussions, readers are directed to references \cite{GZ} and \cite{MM}. In this paper, $s$ and $\gamma$ are real numbers such that $s = \frac{n}{2} + \gamma$.

We begin by revisiting the fact that for a CCE manifold $(X, M, g_+)$ with conformal infinity $(M, [h])$, there exists a defining function $y$ such that
$$g_+ = y^{-2}(dy^2 + g_y), \text{ with } g_0 = h $$
in a collar neighborhood of $M$.  In this expression, $g_y$ represents a metric on the level sets of $y$, which are diffeomorphic to $M$ when $y$ is sufficiently small. This $y$ is called the geodesic defining function. Other literature might denote this function by $x$ or $r$, but we use $y$ as it also represents a geodesic defining function on $\pR$. In this paper, $r$ typically denotes the radius of geodesic balls.

For $s \in \mathbb{C}$ such that $s(n-s) \notin \sigma_{pp}(-\Delta_+)$ and $f \in C^\infty(M)$, there exists  a solution $\mathcal{P}(s)f$ to the Poisson equation
$$(-\Delta_+  - s(n-s))(\mathcal{P}(s)f) = 0 \, \text{ in } X$$
taking the form
$$\begin{cases}
\mathcal{P}(s)f = y^{n-s} F + y^s G \,\,\text{ if }\, s \notin n/2 + \mathbb{N} \\
\mathcal{P}(s)f = y^{n/2-l/2} F + G y^{n/2+l/2} \log y  \,\,\text{ if }\, s = n/2 +l, \, l \in \mathbb{N}
\end{cases}$$
where $F, G \in C^{\infty} (\overline{X})$ and $F|_M = f$.
The operator $\mathcal{P}(s)$ is known as the Poisson operator and is meromorphic in the region $\{\, \mathrm{Re}\, s > n/2 \,\}$, having poles only for $s$ such that $s(n-s) \in \sigma_{pp}(-\Delta_+)$. The scattering operator, denoted $S(s)$, is defined by $S(s) f= G|_M$. The fractional GJMS operator is defined as
$$P(\gamma) \coloneqq 2^{2\gamma} \frac{\Gamma(\gamma)}{\Gamma(-\gamma)} S(\frac{n}{2} + \gamma )$$
where $\Gamma$ represents the Gamma function. Furthermore, it is established that $F$ has an aymptotic expansion $F = f + f_{(2)} y^2 + f_{(4)} y^4 + \cdots$ near $M$.

The fractional $Q_{2\gamma}$ curvature on the boundary is defined as $Q_{2\gamma} \coloneqq \frac{2}{n-2\gamma} P_{2\gamma}(1)$. For more detailed exploration of the conformal fractional Laplacian and fractional $Q_{2\gamma}$ curvature, we direct readers to \cite{CG2} and \cite{CC}. 

We now introduce a family of compactifications, each parametrized by a real parameter $s \coloneqq \frac{n}{2} + \gamma $, for a given CCE manifold $(X, M, g_+)$.
\begin{definition}
For $s>\frac{n}{2}$, $s \ne n$, and when $s(n-s) \notin \sigma_{pp}(-\Delta_+) $, define $v_s \coloneqq \mathcal{P}(s)1$. According to  \cite[Lemma 6.1]{CC}, It is established that $v_s>0$. Let $\rho_s = v_s^{\frac{1}{n-s}}$. The resulting  compactified metric $g_s^* \coloneqq \rho_s^2 g^+$ is termed the adapted metric. 

For the case $s = n$, define $v \coloneqq  -\frac{d}{ds}\big|_{s=n} \mathcal{P}(s)1$. By \cite[Theorem 4.1]{FG}, $v$ satisfies the PDE $-\Delta_+ v = n$. Additionally, the function $v-\log y$ is within $C^3(X)$ and vanishes on $M$. The metric $g^* \coloneqq e^{2v} g_+$ is known as the Fefferman-Graham compactified metric. 
\end{definition}

When the conformal infinity $(M, h)$ has positive scalar curvature. it is known that $-\Delta_+$  has no pure point specturm. Consequently, this ensures that the adapted metric is well-defined for $s > \frac{n}{2}$.
\begin{theorem}\cite[Theorem A]{L}
Let $(X, M, g_+)$ be a CCE manifold with conformal infinity $(M, [h])$. If the Yamabe constant on the boundary is non-negative, then $-\Delta_+$ has no $L^2$ eigenfunctions.
\end{theorem}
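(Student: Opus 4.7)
The plan is to combine a Hardy--Poincar\'e-type inequality, which controls the bottom of the spectrum, with a positive-barrier argument powered by the Yamabe hypothesis, together with a unique-continuation-at-infinity result for embedded eigenvalues. First I would recall Mazzeo's structural analysis of $-\Delta_+$ on any conformally compact manifold, which identifies the essential spectrum as $[n^2/4, \infty)$; hence any $L^2$ eigenvalue must lie in $[0, n^2/4)$, at $n^2/4$, or be embedded in $(n^2/4, \infty)$, and I would treat these three regimes separately.

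For $\mu \in [0, n^2/4)$ the argument uses no assumption on the boundary geometry. I would invoke the standard Hardy--Poincar\'e inequality $\int_X |\nabla_{g_+}\varphi|^2 \, dv_{g_+} \geq \tfrac{n^2}{4}\int_X \varphi^2 \, dv_{g_+}$, which holds on any asymptotically hyperbolic manifold purely from the $y^{-2}$ behaviour of $g_+$ near $M$. Testing the eigenvalue equation $-\Delta_+\varphi = \mu\varphi$ against $\varphi$ and integrating by parts on an exhaustion $\{\rho \geq \epsilon\}$ (with vanishing boundary terms because $\varphi \in L^2$ and the measure degenerates) yields $\mu \int_X \varphi^2 \geq \tfrac{n^2}{4}\int_X \varphi^2$, forcing $\varphi \equiv 0$.

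The Yamabe hypothesis enters in the borderline case $\mu = n^2/4$. The key input is that non-negativity of the Yamabe constant produces, through scattering theory, a globally positive solution $u > 0$ on $\mathring{X}$ of $-\Delta_+ u - \tfrac{n^2}{4} u = 0$ with boundary asymptotic $u \sim y^{n/2}$ (this is a positive-mass/sharp-Hardy phenomenon in the asymptotically hyperbolic setting). Given such $u$, I would set $\psi := \varphi/u$ and rewrite $-\Delta_+\varphi = \tfrac{n^2}{4}\varphi$ as the divergence-form identity $\mathrm{div}_{g_+}(u^2 \nabla_{g_+}\psi) = 0$. Pairing with $\psi$ against a cut-off and passing to the limit on $\{\rho \geq \epsilon\}$, a careful boundary analysis using the $L^2$ assumption on $\varphi$ and the asymptotic of $u$ makes the boundary contributions vanish, so that $u^2|\nabla_{g_+}\psi|^2$ integrates to zero. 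This forces $\psi$ constant, hence $\varphi \in \mathbb{R}\,u$; but a non-trivial constant multiple of $u$ is not in $L^2$, so $\varphi \equiv 0$.

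The main obstacle is the exclusion of embedded eigenvalues $\mu > n^2/4$, where the Rayleigh and quotient-by-$u$ arguments no longer suffice. Here I would appeal to a Carleman estimate of Mazzeo--Melrose type, adapted to the $0$-differential operator structure of $-\Delta_+$ at the conformal infinity, of the schematic form $\|e^{\tau F(\rho)} (-\Delta_+ - \mu)\varphi\|_{L^2} \geq C\tau \|e^{\tau F(\rho)}\varphi\|_{L^2}$ for all sufficiently large $\tau$. Iterating this estimate upgrades the $L^2$ decay of $\varphi$ near $M$ to super-polynomial decay in $\rho$, and then unique continuation at infinity forces $\varphi \equiv 0$. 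This is the technically most demanding step; the positivity of the Yamabe constant is again used to control the indicial roots and ensure that the Carleman weight can be chosen globally on $X$.
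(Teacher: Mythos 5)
This statement is quoted from Lee's paper \cite{L}; the present paper gives no proof of it, so your proposal has to be measured against Lee's original argument. Your overall architecture (Mazzeo's identification of the essential spectrum as $[n^2/4,\infty)$, a positive barrier coming from the Yamabe hypothesis, and Mazzeo's unique continuation at infinity to exclude embedded eigenvalues) is the right one, but the way you distribute the work among the three spectral regimes contains a genuine error in the first and most important one.

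The gap is your treatment of $\mu\in[0,n^2/4)$. You assert that the sharp Hardy--Poincar\'e inequality $\int_X|\nabla_{g_+}\varphi|^2\,dv_{g_+}\ge\frac{n^2}{4}\int_X\varphi^2\,dv_{g_+}$ holds on \emph{any} asymptotically hyperbolic manifold ``purely from the $y^{-2}$ behaviour of $g_+$ near $M$,'' with no hypothesis on the boundary. That is false: this inequality is exactly the statement $\lambda_0(-\Delta_+)\ge n^2/4$, which is the nontrivial content of Lee's theorem and which can fail (with genuine $L^2$ eigenvalues appearing below $n^2/4$) when the conformal infinity has negative Yamabe invariant. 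The $y^{-2}$ asymptotics only control the Rayleigh quotient near the boundary; they say nothing about test functions concentrated in the interior, which is why $\inf\sigma_{\mathrm{ess}}=n^2/4$ does not imply $\lambda_0=n^2/4$. So as written, the case that actually requires the Yamabe hypothesis is disposed of by a false unconditional inequality. The repair is essentially contained in your own borderline argument: once you have a global positive solution (or supersolution) $u>0$ of $-\Delta_+u-\frac{n^2}{4}u= 0$ with $u\sim y^{n/2}$ --- whose construction from the non-negative Yamabe constant and the Einstein condition is the real work in \cite{L} --- then $(-\Delta_+-\mu)u=(\frac{n^2}{4}-\mu)u\ge 0$ for every $\mu\le n^2/4$, and the Allegretto--Piepenbrink/Barta criterion (equivalently, your ground-state substitution $\psi=\varphi/u$ applied for each such $\mu$) gives $\lambda_0\ge n^2/4$ in one stroke, eliminating all eigenvalues in $[0,n^2/4]$. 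This is in fact how Lee argues: the barrier handles the entire sub-essential range, not just the endpoint. Two smaller points: you should make explicit where the Einstein condition enters (it is needed to verify the supersolution property globally, not just near $M$), and your closing remark that positivity of the Yamabe constant is used to control the indicial roots in the Carleman estimate is incorrect --- Mazzeo's exclusion of embedded eigenvalues in $[n^2/4,\infty)$ requires no hypothesis on the conformal infinity.
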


\subsection{$y^a$-weighted Sobolev spaces}
Let $a$ be a real number. In this subsection, we explore some analytic foundations of $y^a$-weighted Sobolev spaces on $\R_+^{n+1}$. We denote $B_r^+$ as the half-ball $\{z = (x, y) \in \pR |\, |z|<r \}$, and $(\partial B_r)^+$ as the boundary of this half-ball $\{z = (x, y) \in \pR |\, |z|= r \}$  for any $r>0$.
\begin{definition}\label{def:wfunctional}
For a real number $a$, we define the following Sobolev spaces:
\begin{align*}
H^{1, a} (\pR) \coloneqq & \{ u \in W^{1,1}_{\mathrm{loc}} ( \pR) |\int_{B_r^+} (|\nabla u|^2 + u^2 ) y^a dz <\infty \,  \text{ for every } r>0 \}\\
\tilde{H}^{1, a}(\pR) \coloneqq & \{ u \in H^{1, a}( \pR) |\, \text{For every $r>0$}, \exists  \{\phi_k\}_{k \ge 1},  \text{a sequence of functions }\\
&  \text{in $ C_c^1 (\overline{B_r^+}\setminus  \oR)$ s.t.} \int_{B_r^+} (|\nabla (u-\phi_k)|^2 + |u-\phi_k|^2 ) y^a dz \rightarrow 0  \}.	
\end{align*}
\end{definition}

We have the following inclusion property for $\tilde{H}^{1, a} (\pR)$.
\begin{proposition}\label{lemma:inclusion}
Let $a \le b$ be real numbers. Then, it holds that
$$\tilde{H}^{1, a} (\pR) \subset \tilde{H}^{1, b} (\pR).$$
\end{proposition}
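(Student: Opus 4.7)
The plan is to exploit a simple pointwise comparison of the two weights on bounded half-balls. On $B_r^+$ we have $0 < y \le r$, and since $b - a \ge 0$, this gives
\[ y^b \;=\; y^{b-a} \cdot y^a \;\le\; r^{b-a} \cdot y^a \qquad \text{on } B_r^+. \]
Hence the $y^b$-weighted $L^2$-norm on any fixed $B_r^+$ is controlled by the $y^a$-weighted one up to the multiplicative constant $r^{b-a}$, which is independent of the function being integrated.

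Concretely, given $u \in \tilde{H}^{1,a}(\pR)$ and $r > 0$, the definition supplies a sequence $\{\phi_k\}_{k \ge 1} \subset C_c^1(\overline{B_r^+} \setminus \oR)$ with
\[ \int_{B_r^+} \bigl(|\nabla(u - \phi_k)|^2 + |u - \phi_k|^2\bigr) y^a \, dz \longrightarrow 0. \]
Since the admissible test-function class $C_c^1(\overline{B_r^+} \setminus \oR)$ does not depend on the weight, I would reuse the very same sequence. Integrating the pointwise comparison against $|\nabla(u-\phi_k)|^2 + |u-\phi_k|^2$ yields
\[ \int_{B_r^+} \bigl(|\nabla(u - \phi_k)|^2 + |u - \phi_k|^2\bigr) y^b \, dz \;\le\; r^{b-a} \int_{B_r^+} \bigl(|\nabla(u - \phi_k)|^2 + |u - \phi_k|^2\bigr) y^a \, dz \longrightarrow 0. \]
The same estimate with $u$ in place of $u - \phi_k$ shows $\int_{B_r^+}(|\nabla u|^2 + u^2) y^b \, dz < \infty$, so $u \in H^{1,b}(\pR)$. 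Since $r > 0$ was arbitrary, we conclude $u \in \tilde{H}^{1,b}(\pR)$.

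I do not foresee any real obstacle; the whole argument is the pointwise bound $y^{b-a} \le r^{b-a}$ on $B_r^+$ combined with the observation that the approximating sequence can be reused verbatim. The asymmetry in the statement is mirrored by the asymmetry in this bound: the reverse inclusion would require controlling $y^{a-b}$ near $\{y = 0\}$, which is unbounded and generally fails.
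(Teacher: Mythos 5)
Your proof is correct and follows essentially the same approach as the paper: a pointwise comparison $y^b \le C(r)\,y^a$ on $B_r^+$ together with reusing the same approximating sequence from $C_c^1(\overline{B_r^+}\setminus\oR)$. The only difference is cosmetic — you observe directly that $y^{b-a}\le r^{b-a}$ on all of $B_r^+$, whereas the paper splits $B_r^+$ into $\{y\le 1\}$ and $\{1\le y\le r\}$ and arrives at the constant $\max(1,r^{b-a})$; your single-shot bound is in fact slightly cleaner.
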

\begin{proof}
Let $u \in \tilde{H}^{1, a} (\pR)$. For a $r>0$,  there exists a sequence of functions $\{\phi_k \in C_c^1 (\overline{B_r^+}\setminus  \oR)\}_{k \ge 1}$ such that $$\int_{B_r^+} (|\nabla (u-\phi_k)|^2 + |u-\phi_k|^2 ) y^a dz \rightarrow 0 $$ by definition. We now check that $\int_{B_r^+} (|\nabla u|^2 + u^2 ) y^b dz <\infty$, and $\int_{B_r^+} (|\nabla (u-\phi_k)|^2 + |u-\phi_k|^2 ) y^b dz \rightarrow 0 $. This will confirm that $u \in  \tilde{H}^{1, b} (\pR)$.
We compute:
\begin{align*}
\int_{B_r^+} (|\nabla u|^2 + u^2 ) y^b dz & = \int_{B_r^+\cap\{y\le 1\}} (|\nabla u|^2 + u^2 ) y^b dz +  \int_{B_r^+ \cap \{1 \le y \le r \}} (|\nabla u|^2 + u^2 ) y^b dz \\
& \le \int_{B_r^+\cap\{y\le 1\}} (|\nabla u|^2 + u^2 ) y^a dz +  r^{b-a} \int_{B_r^+ \cap \{1 \le y \le r \}} (|\nabla u|^2 + u^2 ) y^a dz \\
&\le \max (1, r^{b-a}) \int_{B_r^+} (|\nabla u|^2 + u^2 ) y^a dz <\infty
\end{align*}
Similarly, we observe:
\begin{align*}
\int_{B_r^+} (|\nabla (u-\phi_k)|^2 + |u-\phi_k|^2 ) y^b dz \le \max (1, r^{b-a}) \int_{B_r^+} (|\nabla (u-\phi_k)|^2 + |u-\phi_k|^2 ) y^a dz.
\end{align*}
Since the right-hand side converges to 0, we conclude that $u$ indeed belongs to $\tilde{H}^{1, b} (\pR)$, thus proving the proposition.
\end{proof}

When $a \in (-1, 1)$, we make a straightforward observation: any function that is a member of $\tilde{H}^{1, a} (\pR) $ has a zero trace on $\oR$.
\begin{proposition}\label{lemma:trace}
Assume $a \in (-1, 1)$. If $u \in \tilde{H}^{1, a} (\pR)$, then $u \in  W^{1,1}(B_r^+)$ for any $r>0$ and $u|_{\oR} \equiv 0$.
\end{proposition}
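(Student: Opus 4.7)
The plan is to reduce the claim to two standard facts: that weighted $L^2$ control with weight $y^a$ implies unweighted $L^1$ control as long as $y^{-a}$ is integrable near the boundary, and that $W^{1,1}$-traces are continuous with respect to $W^{1,1}$-convergence. The pivotal observation is that for $a \in (-1,1)$ we have $-a > -1$, so
\begin{equation*}
C_r \coloneqq \int_{B_r^+} y^{-a}\, dz < \infty
\end{equation*}
for every $r > 0$, since the singular factor $y^{-a}$ is integrable on a slab $\{0 < y < r\}$ of finite thickness.

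First, I would apply the Cauchy--Schwarz inequality with the splitting $|\nabla u| = (|\nabla u| y^{a/2})\cdot y^{-a/2}$ (and the analogous splitting for $u$) to obtain
\begin{equation*}
\int_{B_r^+} \bigl(|\nabla u| + |u|\bigr)\, dz \;\le\; C_r^{1/2}\left(\int_{B_r^+}\bigl(|\nabla u|^2 + u^2\bigr)\, y^a\, dz\right)^{1/2}.
\end{equation*}
Since $u \in H^{1,a}(\pR)$, the right-hand side is finite, which proves $u \in W^{1,1}(B_r^+)$. This takes care of the first assertion.

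For the trace statement, I would use the defining approximation property of $\tilde{H}^{1,a}(\pR)$: given $r > 0$, pick a sequence $\phi_k \in C_c^1(\overline{B_r^+} \setminus \oR)$ with $\int_{B_r^+}(|\nabla(u-\phi_k)|^2 + |u-\phi_k|^2)\, y^a\, dz \to 0$. Applying the same Cauchy--Schwarz bound to the difference $u - \phi_k$ gives
\begin{equation*}
\|u - \phi_k\|_{W^{1,1}(B_r^+)} \;\le\; C_r^{1/2}\left(\int_{B_r^+}\bigl(|\nabla(u-\phi_k)|^2 + |u-\phi_k|^2\bigr)\, y^a\, dz\right)^{1/2} \longrightarrow 0.
\end{equation*}
Thus $\phi_k \to u$ in $W^{1,1}(B_r^+)$. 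Each $\phi_k$ vanishes identically in a neighborhood of $\oR$ and so has zero trace; by continuity of the trace operator $W^{1,1}(B_r^+) \to L^1((\partial B_r^+) \cap \oR)$, the trace of $u$ on $\oR \cap B_r$ is the $L^1$-limit of these zero traces, hence zero. Letting $r \to \infty$ yields $u|_{\oR} \equiv 0$.

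No step here is a real obstacle; the only place one must be careful is verifying that the exponent condition $a \in (-1,1)$ is used precisely to ensure $\int_{B_r^+} y^{-a}\, dz < \infty$ (the upper bound $a < 1$ is what matters for integrability near $\{y=0\}$; the lower bound $a > -1$ is not strictly needed for this particular statement, but is natural in the context of $A_2$-Muckenhoupt weights that the paper works with).
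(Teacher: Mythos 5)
Your proposal is correct and follows essentially the same route as the paper's proof: the same Cauchy--Schwarz splitting against $y^{-a}$ to pass from weighted $L^2$ to unweighted $L^1$, followed by the same $W^{1,1}$-approximation by the compactly supported $\phi_k$ and continuity of the trace operator. Your side remark that only $a<1$ is actually used for the integrability of $y^{-a}$ near $\{y=0\}$ is also accurate.
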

\begin{proof}
By Cauchy-Schwarz inequality, we have:
$$\big(\int_{B_r^+} (|\nabla u| + |u|) \big)^2 \le \int_{B_r^+}2y^a(|\nabla u|^2 + u^2)  \cdot \int_{B_r^+} y^{-a} $$
and the term on the right-hand side is finite given that $a \in ( -1, 1)$.
Using the Cauchy-Schwarz inequality once more, we find that:
$$\big(\int_{B_r^+} (|\nabla(u-\phi_k)| + |u-\phi_k|) \big)^2 \le \int_{B_r^+}2y^a(|\nabla (u-\phi_k)|^2 + (u-\phi_k)^2)  \cdot \int_{B_r^+} y^{-a}  \rightarrow 0$$
for a sequence $\{\phi_k \in C_c^1 (\overline{B_r^+}\setminus  \oR)\}_{k \ge 1}$ such that $$\int_{B_r^+} (|\nabla (u-\phi_k)|^2 + |u-\phi_k|^2 ) y^a dz \rightarrow 0.$$
Consequently, the trace Sobolev inequality ensures that $u$ is zero on the boundary.
\end{proof}

For $a \in (-1, 1)$, the weight $y^a$ belongs to the $\cA_2$-Muckenhoupt class, which means that certain $\cA_2$-weighted inequalities, such as Sobolev embedding theorem and Hardy's inequality, are well-established. The following Hardy-type inequalities, which extend to cover  the range $a \in (-\infty, 1)$, are proved and used in \cite{STV1} and \cite{STV2}. 
\begin{proposition} \cite[Lemmga B.5]{STV2} \label{lemma:whardy}
Let $a \in (-\infty, 1)$ and $r>0$. For $w \in C_c^1(\bar{B}_r^+\setminus \oR)$, we have following inequalities: 

(1) $C_a \int_{ B_r^+} y^{a-2} w^2 \le \int_{ B_r^+} y^a |\nabla w|^2$.

(2) $C_a \int_{(\partial B_r)^+} y^{a-1} w^2 \le \int_{B_r^+} y^a|\nabla w|^2$.

(3) $C_a \int_{(\partial B_r)^+} y^{a} w^2 \le r\int_{B_r^+} y^a|\nabla w|^2$.

In these inequalities, the constants $C_a$ depend only on the value of $a$. 

\end{proposition}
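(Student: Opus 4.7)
The strategy is to prove all three inequalities via Pohozaev-type identities obtained by applying the divergence theorem to $w^2 V$ on $B_r^+$ for two carefully chosen vector fields $V$. The structurally important point is that $w \in C_c^1(\overline{B}_r^+ \setminus \oR)$ vanishes in a neighborhood of the flat boundary $\oR$, so although the weights $y^{a-1}$ and $y^{a-2}$ blow up at $y = 0$ when $a < 1$, the products $w^2 y^{a-1}$ and $w^2 y^{a-2}$ vanish identically there. Consequently all boundary integrals on the flat face are zero, and only the hemisphere $(\partial B_r)^+$ contributes.

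For (1) and (3), I would choose the purely vertical field $V = (0,\ldots,0,y^{a-1})$, so that $\operatorname{div} V = (a-1) y^{a-2}$ and $V \cdot \nu = y^a/r$ on the hemisphere. Applying the divergence theorem to $w^2 V$ and using the product rule yields the identity
\begin{equation*}
(1-a)\int_{B_r^+} y^{a-2} w^2\, dz + \frac{1}{r}\int_{(\partial B_r)^+} y^a w^2\, dS = 2\int_{B_r^+} y^{a-1} w\, \partial_y w\, dz.
\end{equation*}
A weighted Young inequality $2 y^{a-1} w\, \partial_y w \le \varepsilon\, y^{a-2} w^2 + \varepsilon^{-1} y^a (\partial_y w)^2$ with $\varepsilon = (1-a)/2$ absorbs part of the right-hand side into the first LHS term, after which both non-negative LHS terms are bounded by the $y^a$-weighted Dirichlet energy. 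This delivers (1) with a constant of order $(1-a)^2$ and (3) with a constant of order $1-a$ in one stroke.

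For (2), I would use the radial field $V(z) = y^{a-1} z$, for which $\operatorname{div} V = (n+a) y^{a-1}$ and $V \cdot \nu = r\, y^{a-1}$ on the hemisphere. The resulting identity is
\begin{equation*}
r\int_{(\partial B_r)^+} y^{a-1} w^2\, dS = (n+a)\int_{B_r^+} y^{a-1} w^2\, dz + 2\int_{B_r^+} y^{a-1} w\, (z\cdot \nabla w)\, dz.
\end{equation*}
I would then bound $|z| \le r$ and use the pointwise inequality $y^{a-1} \le r\, y^{a-2}$ (valid because $0 < y \le r$) to convert both $y^{a-1}$-bulk integrals into $y^{a-2}$-bulk integrals; the cross term is split via $2 y^{a-1}|w||\nabla w| \le y^{a-2} w^2 + y^a |\nabla w|^2$. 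Dividing through by $r$ and invoking (1) to absorb the resulting $\int y^{a-2} w^2$ into $\int y^a |\nabla w|^2$ yields (2). The sign of $n+a$ poses no issue: for $n+a \ge 0$ the bulk term is absorbed as above, while for $n+a < 0$ it has favorable sign and can simply be dropped after taking absolute values.

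The main obstacle is purely technical: justifying the divergence theorem in the presence of the singular factors $y^{a-1}$ and $y^{a-2}$. The compact-support hypothesis $w \in C_c^1(\overline{B}_r^+ \setminus \oR)$ settles this uniformly, since the integrand $w^2 V$ and its derivatives vanish in a full neighborhood of $\oR$; one may work on $B_r^+ \cap \{y > \delta\}$ for $\delta$ smaller than the distance from $\mathrm{supp}(w)$ to $\oR$, apply the classical divergence theorem on this smooth bounded region, and observe that the integrals are independent of $\delta$. All constants produced in the argument depend only on $a$ and the ambient dimension $n$, as claimed.
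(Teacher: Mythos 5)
Your proof is correct. Note, however, that the paper does not prove this proposition at all --- it is quoted verbatim from \cite[Lemma B.5]{STV2} --- so there is no in-paper argument to compare against; what you have supplied is a self-contained derivation. Your route is the standard one for weighted Hardy--Poincar\'e inequalities: the divergence theorem applied to $w^2V$ with $V=y^{a-1}e_{n+1}$ gives the identity
\[
(1-a)\int_{B_r^+}y^{a-2}w^2+\frac1r\int_{(\partial B_r)^+}y^aw^2=2\int_{B_r^+}y^{a-1}w\,\partial_yw,
\]
and Young's inequality with $\varepsilon=(1-a)/2$ simultaneously yields (1) with $C_a=(1-a)^2/4$ and (3) with $C_a=(1-a)/2$; the radial field $y^{a-1}z$ then gives (2) after absorbing the bulk terms via (1). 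All computations check out ($\operatorname{div}(y^{a-1}z)=(n+a)y^{a-1}$, $V\cdot\nu=ry^{a-1}$ on the hemisphere), the sign discussion for $n+a$ is handled correctly, and the justification of the divergence theorem via the compact support of $w$ away from $\oR$ is exactly the right technical point. The only blemish is that your constant in (2) is of the form $C(n+a+1)/(1-a)^2$, i.e.\ it depends on the dimension $n$ as well as on $a$, whereas the proposition as stated claims dependence on $a$ alone; since $n$ is fixed throughout the paper and the inequality is only used qualitatively, this discrepancy is harmless, but it is worth acknowledging explicitly rather than asserting the claim "as claimed."
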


\section{Some Liouville-type theorems}

In this section, we recall  Liouville-type theorems  for weak solutions of certain degenerate elliptic PDEs. Additionally, we prove a Liouviile-type theorem for subharmonic functions defined in the upper half-space.

\subsection{Liouville-type theorems for degenerate elliptic PDEs}
This subsection focuses on Liouville-type theorems for weak solutions of a specific class of degenerate PDEs as demonstrated in \cite{STV2}. The particular equation under consideration is:
\begin{equation} \label{eq:delta_a}
 \Delta_a u = 0\, \text{ on } \pR
\end{equation}

\begin{definition}
Let $a \in (-\infty, 1)$. A function $w \in \tilde{H}^{1, a}(\pR)$ is a weak solution of the (\ref{eq:delta_a}) if
$$\int_{\pR} y^a \nabla w \cdot \nabla \phi = 0 $$
for every $\phi \in C_c(\pR)$.
\end{definition}
Note that $\tilde{H}^{1, a}(\pR)$ is the natural functional space for weak solutions of the degenerate elliptic PDE (\ref{eq:delta_a}). Particularly when $a$ is within $(-1, 1)$, this equation is used in the celebrated work by Caffarelli and Silvestre \cite{CC} to characterize the fractional Laplacian as the Dirichlet-to-Neumann map.

The following Liouville-type theorems have been established in the work by Sire, Terracini, and Vita \cite{STV2}.

\begin{theorem}\label{thm:stv2} \cite[Theorem 3.1]{STV2}
Let $a \in (-1, 1)$, and $w$ be a solution to 
\begin{equation*}
\begin{cases}
\Delta_a w = 0 \text{ in } \pR \\
w = 0 \text{ on } \oR,
\end{cases}
\end{equation*}
and let us suppose that for some $d \in [0, 1-a)$, $C>0$ it holds
$$|w(z)| \le C(1+|z|^d)$$
for every $z$. Then $w$ is identically zero.
\end{theorem}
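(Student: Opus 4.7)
The plan is to use odd reflection across $\oR$ to convert the problem into a whole-space Liouville problem on $\R^{n+1}$ for the even-weight operator $\mathrm{div}(|y|^a \nabla \cdot)$, and then exclude polynomial growth of order strictly less than $1-a$ by classifying the homogeneous solutions, implemented either via a spherical-harmonic decomposition or via an Almgren-frequency argument.

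First, since $w \in \tilde H^{1,a}(\pR)$ with $a \in (-1,1)$, Proposition \ref{lemma:trace} gives $w|_{\oR} = 0$ in trace sense, so the odd extension
\[
\tilde w(x,y) \coloneqq \begin{cases} w(x,y), & y \geq 0, \\ -w(x,-y), & y<0, \end{cases}
\]
lies in $H^{1,|y|^a}_{\mathrm{loc}}(\R^{n+1})$. For any $\phi \in C_c^\infty(\R^{n+1})$, decomposing $\phi = \phi_e + \phi_o$ into even and odd parts in $y$, the symmetries of $\tilde w$ force $\int |y|^a \nabla \tilde w \cdot \nabla \phi_e \, dz = 0$, while $\phi_o$ vanishes on $\oR$ and can be approximated by functions compactly supported in $\pR$ using Hardy's inequality (Proposition \ref{lemma:whardy}) together with the zero trace of $w$; testing the original equation then yields $\int |y|^a \nabla \tilde w \cdot \nabla \phi_o \, dz = 0$. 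Hence $\tilde w$ is a weak solution of $\mathrm{div}(|y|^a\nabla \tilde w)=0$ on $\R^{n+1}$, and the growth bound $|\tilde w(z)| \leq C(1+|z|^d)$ extends to the full space by symmetry.

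Second, I would classify the odd-in-$y$ homogeneous solutions of $\mathrm{div}(|y|^a \nabla u) = 0$ on $\R^{n+1}$. A direct calculation shows that $u = y^{1-a}$ solves the equation in the upper half-space (the contributions $y^a\partial_{yy}u$ and $a y^{a-1}\partial_y u$ cancel), so its odd extension $\mathrm{sign}(y)|y|^{1-a}$ is a homogeneous solution of degree $1-a$ on the full space. Separation of variables $u = r^\alpha \phi(\omega)$ reduces the equation to a degenerate Sturm–Liouville eigenvalue problem on $\mathbb{S}^n$ with weight $|\omega_y|^a$, whose admissible $\alpha$ form a discrete increasing sequence splitting by parity in $y$: the even sector starts at $\alpha = 0$ (constants), while the odd sector has minimum $\alpha = 1-a$, realized by the candidate above. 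Expanding $\tilde w(r,\omega) = \sum_k c_k(r)\phi_k(\omega)$ and solving the radial Euler ODEs, the growth bound $|\tilde w|\leq C(1+|z|^d)$ with $d < 1-a$ forces $c_k \equiv 0$ for every odd-sector mode, while the oddness of $\tilde w$ in $y$ annihilates the even-sector coefficients, yielding $\tilde w \equiv 0$.

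The principal obstacle is making the spherical expansion fully rigorous for a weak $H^{1,|y|^a}_{\mathrm{loc}}$ solution on $\R^{n+1}$ and pinning down the sharp bottom $1-a$ of the odd-sector spectrum. A cleaner packaging that avoids the explicit spectral decomposition is the Almgren frequency
\[
N(r) = \frac{r\int_{B_r}|y|^a|\nabla\tilde w|^2\,dz}{\int_{\partial B_r}|y|^a\tilde w^2\,d\sigma},
\]
which is monotone non-decreasing for the reflected equation; polynomial growth of order $d$ then forces $N(r)\leq d$ for all large $r$, while for a nonzero odd-in-$y$ solution any blow-down subsequential limit is a nontrivial odd homogeneous solution of degree $N(\infty) \geq 1-a$, giving the contradiction $d \geq 1-a$ unless $\tilde w \equiv 0$.
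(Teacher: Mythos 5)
This theorem is cited from \cite{STV2} and is not proved in the paper, so there is no in-paper proof to compare against directly; the closest internal analogue is the paper's own Theorem \ref{thm:liouville}, which handles the case $a=0$ (and the subharmonic generalization) by the monotonicity of $H(r)/r^2$ with $H(r)=r^{-n}\int_{(\partial B_r)^+}w^2$, combined with the sharp half-ball Poincar\'e inequality of Lemma 3.3.

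Your overall architecture --- odd reflection to the weight $|y|^a$ on $\R^{n+1}$, identification of $y^{1-a}$ as the threshold solution, and an Almgren-type monotonicity argument --- is the right one and matches the STV2 strategy in spirit. However, the gap you flag at the end is genuine and is not actually circumvented by the ``cleaner packaging'' via the frequency function: to conclude $N(\infty)\ge 1-a$ from the blow-down you still need to know that every nontrivial odd-in-$y$, degree-$\alpha$ homogeneous solution of $\mathrm{div}(|y|^a\nabla u)=0$ has $\alpha\ge 1-a$, which is exactly the spherical spectral classification you set aside. So as written, both branches of the argument lean on the same unproved fact. (You also don't need Hardy's inequality to pass the weak formulation from $C_c(\pR)$ test functions to $\phi_o$ vanishing on $\oR$: cut off with $\eta_\delta$, use $|\phi_o|\lesssim y$ near $\oR$, and the error $\delta^{-1}\int_{\{y<\delta\}}y^{a+1}|\nabla w|$ goes to zero by Cauchy--Schwarz and $a>-1$.)

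The missing ingredient has a direct, scale-by-scale proof that bypasses both the spherical decomposition and the blow-down, and it is exactly the weighted generalization of the paper's Lemma 3.3. Take $V=y^{1-a}$, which satisfies $\mathrm{div}(y^a\nabla V)=0$ and $\partial_\nu V/V = (1-a)/r$ on $(\partial B_r)^+$. For $w$ with zero trace on $\oR$ one computes
\begin{equation*}
(1-a)\int_{(\partial B_1)^+} y^a w^2 \;=\; \int_{B_1^+}\mathrm{div}\Bigl(y^a w^2\tfrac{\nabla V}{V}\Bigr)
\;=\;\int_{B_1^+} y^a\Bigl(|\nabla w|^2 - \bigl|\nabla w - \tfrac{w}{V}\nabla V\bigr|^2\Bigr)
\;\le\;\int_{B_1^+} y^a|\nabla w|^2,
\end{equation*}
and by scaling $r\int_{B_r^+}y^a|\nabla w|^2 \ge (1-a)\int_{(\partial B_r)^+}y^a w^2$. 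With $H(r)=r^{-(n+a)}\int_{(\partial B_r)^+}y^a w^2$ and $E(r)=r^{1-(n+a)}\int_{B_r^+}y^a|\nabla w|^2$, the same test-function computation as in Theorem \ref{thm:liouville} gives $E(r)\le\frac{r}{2}H'(r)$, whence $(1-a)H(r)\le E(r)\le\frac{r}{2}H'(r)$ and $\bigl(H(r)/r^{2(1-a)}\bigr)'\ge 0$. The growth bound $|w|\le C(1+|z|^d)$ with $d<1-a$ gives $H(r)\lesssim r^{2d}+1 = o(r^{2(1-a)})$, so $H(r_0)=0$ for every $r_0$ and $w\equiv 0$. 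If you replace the unproved spectral claim with this inequality, your proposal becomes a complete proof along the same lines as STV2 and as the paper's own $a=0$ case.
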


\begin{theorem}\label{stv2} \cite[Theorem 3.2]{STV2}
Let $a \in (-\infty, 1)$, and $w$ be a solution to 
\begin{equation*}
\begin{cases}
\Delta_a w = 0 \text{ in } \pR \\
w = 0 \text{ on } \oR,
\end{cases}
\end{equation*}
and let us suppose that for some $d \in [0, 1)$, $C>0$ it holds
$$|w(z)| \le C y^{2-a}(1+|z|^d)$$
for every $z$. Then $w$ is identically zero.
\end{theorem}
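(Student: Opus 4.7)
The plan is to prove the theorem via an Almgren-type frequency function combined with a blow-down analysis at infinity. The crucial observation is that the hypothesis $|w(z)| \le Cy^{2-a}(1+|z|^d)$ simultaneously encodes polynomial growth at infinity of order at most $(2-a)+d < 3-a$ and boundary vanishing at rate $y^{2-a}$, strictly faster than the natural Dirichlet rate $y^{1-a}$ for solutions of $\Delta_a u = 0$ on $\pR$. This enhanced boundary decay is precisely what will rule out the lowest-degree homogeneous Dirichlet solutions.

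Assuming $w \not\equiv 0$, I would first introduce the frequency
$$
N(r) \coloneqq \frac{r\int_{B_r^+} y^a |\nabla w|^2\, dz}{\int_{(\partial B_r)^+} y^a w^2\, d\sigma},
$$
and, using the weak formulation of $\Delta_a w = 0$ together with the Hardy-type inequalities of Proposition \ref{lemma:whardy}, verify that $N(r)$ is well-defined and non-decreasing in $r$. Plugging the pointwise bound into $H(r) := \int_{(\partial B_r)^+} y^a w^2\, d\sigma$ yields $H(r) \le C r^{n+4-a+2d}$; combined with the elementary identity $\tfrac{d}{dr}\log H(r) = (n+a+2N(r))/r$, this forces $N(\infty) := \lim_{r\to\infty} N(r) \le 2-a+d < 3-a$.

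Next, classify the homogeneous Dirichlet solutions of $\Delta_a u = 0$ via separation of variables in polar coordinates, reducing to a weighted Dirichlet eigenvalue problem on $\mathbb{S}^n_+$ whose spectrum yields the discrete set of admissible degrees $\{k+1-a : k \in \N\}$. The only admissible degrees in $[0, 3-a)$ are therefore $1-a$ (whose eigenspace is spanned by $y^{1-a}$) and $2-a$ (whose eigenspace is spanned by $y^{1-a}x_1, \ldots, y^{1-a}x_n$). Running the standard blow-down procedure with $w_\lambda(z) := w(\lambda z)/H(\lambda)^{1/2}$, and using frequency monotonicity together with interior elliptic regularity and boundary regularity for $\Delta_a$, extract a subsequential limit $w_\infty \not\equiv 0$ that is a homogeneous Dirichlet solution of degree $N(\infty) \in \{1-a, 2-a\}$. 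A careful bookkeeping of the scaling passes a suitable version of the pointwise hypothesis to $w_\infty$: evaluating along $z = (0, \varepsilon)$ and using the explicit form of $w_\infty$ forces the coefficient of $y^{1-a}$ to vanish, while evaluating along $z = (R\hat e, 1)$ for $\hat e$ a horizontal unit vector and using $d < 1$ forces the linear coefficients to vanish. In both cases $w_\infty \equiv 0$, contradicting the normalization, so $w \equiv 0$.

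The main obstacle I expect is the range $a \le -1$, where the weight $y^a$ falls outside the Muckenhoupt $\cA_2$ class. Standard weighted Sobolev embeddings, trace theorems, and boundary H\"older regularity (available for free when $a \in (-1, 1)$) are no longer at hand, and so the frequency monotonicity and the compactness underlying the blow-down must be built up by hand. The plan is to do this relying exclusively on the generalized Hardy inequalities of Proposition \ref{lemma:whardy}, which are valid for the full range $a < 1$; these should supply exactly the integral inequalities needed to run the frequency argument and to extract the blow-down limit uniformly in the parameter.
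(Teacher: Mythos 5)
The paper does not prove this statement; it simply cites \cite[Theorem 3.2]{STV2}. So the relevant comparison is against the argument in that reference, and also against internal consistency of your sketch.

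Your frequency/blow-down sketch has a genuine gap in the step you describe as ``a careful bookkeeping of the scaling passes a suitable version of the pointwise hypothesis to $w_\infty$.'' The blow-down $w_\lambda(z)=w(\lambda z)/H(\lambda)^{1/2}$ is normalized in $L^2$ on the unit half-sphere, not in $L^\infty$. If $N_\infty = 2-a$, the frequency identity $\tfrac{d}{dr}\log H = (n+a+2N(r))/r$ only yields $c_\varepsilon\,\lambda^{n+4-a-\varepsilon}\le H(\lambda)\le C\lambda^{n+4-a}$ for large $\lambda$. Feeding the hypothesis into the rescaling at a point with $|z|\sim 1$ gives
\[
|w_\lambda(z)|\;\le\; C\,\frac{(\lambda y)^{2-a}\bigl(1+(\lambda|z|)^d\bigr)}{H(\lambda)^{1/2}}\;\lesssim\;\lambda^{\,d-\frac{n+a}{2}+\varepsilon}\,y^{2-a},
\]
which \emph{diverges} whenever $d>\tfrac{n+a}{2}$. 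This already occurs inside the Muckenhoupt range: for $n=1$ and $a$ close to $-1$ every $d\in(0,1)$ is problematic, and for $a<-1$ (where $\tfrac{n+a}{2}$ can be negative or small) essentially no $d$ works. So the pointwise bound does not pass to the blow-down limit, and the final step of your argument --- evaluating $w_\infty$ at $(0,\varepsilon)$ or $(R\hat e,1)$ to kill the coefficients --- has nothing to bite on. The classification of homogeneous Dirichlet solutions of $\Delta_a$ and the frequency bound $N_\infty<3-a$ are fine, but the contradiction is never actually produced. (Separately, the compactness and boundary regularity underlying the blow-down genuinely require work outside $\mathcal A_2$, as you flag; Proposition~\ref{lemma:whardy} alone does not give the locally uniform convergence you need.)

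The decisive observation in the intended proof, which your sketch misses, is the change of unknown $\tilde w := w/y^{1-a}$. A direct computation shows $\Delta_a w=0$ with $w|_{\oR}=0$ is equivalent to the \emph{even} (Neumann) equation $\mathrm{div}\bigl(y^{2-a}\nabla\tilde w\bigr)=0$, and the hypothesis $|w|\le C\,y^{2-a}(1+|z|^d)$ becomes $|\tilde w|\le C\,y\,(1+|z|^d)$, i.e.\ growth of order $1+d<2$ together with $\tilde w(x,0)=0$. The Liouville theorem for even solutions then forces $\tilde w$ to be a polynomial of degree at most $1$; since $\beta y$ is not a solution of the even equation, $\tilde w$ is affine in $x$ alone, and the boundary vanishing kills it, giving $\tilde w\equiv 0$ and hence $w\equiv 0$. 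This reduction both avoids the blow-down bookkeeping entirely and makes transparent why the exponent $2-a$ (rather than $1-a$) in the hypothesis is exactly what is needed. If you want to retain a frequency/blow-down framework, it should be run on $\tilde w$, where the boundary condition is the well-posed Neumann one, not on $w$ directly.
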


It's also noteworthy that for $m<\gamma<m+1$, where $m$ is a positive integer, Yang \cite{Y} exploits the fact that if $w$ is a solution to the equation (\ref{eq:delta_a}), then $\Delta_b^{m+1} w = 0 $ for $b= 2m+1-2\gamma \in (-1, 1)$. Consequently, Yang assumes that $w$ is an element of $H^{k,b}(\pR^{n+1})$ and successfully extends the results of \cite{CC} for general higher-order fractional Laplace operators.


\subsection{Liouville-type theorem for subharmonic functions}

In this subsection, we establish a Liouville-type theorem for subharmonic functions, which will be fundamental in proving the main theorems of this article. We begin by revisiting a sharp Poincar\'{e}-type inequality from \cite{STV1}.

\begin{lemma}\cite[Lemma 3.2]{STV1}
Let $w \in \tilde{H}^{1,0}(\pR)$. Then, the following inequality holds:
$$r\int_{B_r^+} |\nabla w|^2 \ge \int_{(\partial B_r)^+} w^2.$$
\end{lemma}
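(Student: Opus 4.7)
The plan is to reduce the inequality to test functions in $C_c^1(\overline{B_r^+}\setminus\oR)$ by density and then exhibit the defect $r\int_{B_r^+}|\nabla w|^2 - \int_{(\partial B_r)^+}w^2$ as a single non-negative quantity via the substitution $w = y\cdot(w/y)$ followed by one integration by parts.

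First, by Definition \ref{def:wfunctional} there is an approximating sequence $\phi_k \in C_c^1(\overline{B_r^+}\setminus \oR)$ converging to $w$ in $H^1(B_r^+)$. Because the weight is trivial ($a=0$), the standard Sobolev trace theorem applies on the spherical piece $(\partial B_r)^+$, which lies at positive distance from the zero-trace face $\oR$, and so $\phi_k \to w$ in $L^2((\partial B_r)^+)$ as well. It therefore suffices to prove the inequality for a generic such $\phi$.

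Because $\phi$ vanishes in a neighborhood of $\oR$, the function $v := \phi/y$ is again in $C_c^1(\overline{B_r^+}\setminus \oR)$, and expanding $\phi = yv$ gives the pointwise identity
\[
|\nabla \phi|^2 = v^2 + y\,\partial_y(v^2) + y^2 |\nabla v|^2.
\]
I would then treat the middle term by integrating by parts in $y$, equivalently by applying the divergence theorem to the vector field $(0,\ldots,0, y v^2)$ on $B_r^+$: the flat face contributes nothing because $y=0$ there, while on $(\partial B_r)^+$ the outward unit normal is $z/r$, so its last coordinate equals $y/r$. This produces
\[
\int_{B_r^+}\bigl(v^2 + y\,\partial_y(v^2)\bigr)\,dz \;=\; \frac{1}{r}\int_{(\partial B_r)^+} y^2 v^2\,dS \;=\; \frac{1}{r}\int_{(\partial B_r)^+}\phi^2\,dS.
\]

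Substituting this back and multiplying by $r$ would give the identity
\[
r\int_{B_r^+} |\nabla \phi|^2 - \int_{(\partial B_r)^+} \phi^2 \;=\; r\int_{B_r^+} y^2 |\nabla v|^2 \;\ge\; 0,
\]
from which the lemma for $w$ follows by passing to the limit in $k$. This also makes the sharpness of the constant $1$ transparent: equality holds precisely when $v$ is constant, that is, when $\phi$ is a multiple of $y$. I do not anticipate a serious obstacle; the two points requiring care are (i) the factorization $\phi=yv$, which is why one works with the approximating sequence rather than with $w$ directly (since $w/y$ need not be in $H^1$), and (ii) the trace-convergence step, which is standard for the unweighted space and is the reason the result is specifically stated for $\tilde H^{1,0}$.
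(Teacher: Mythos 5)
Your proof is correct and, once the notational dust settles, it is essentially the same as the paper's. Both arguments apply the divergence theorem to the very same vector field: your $(0,\ldots,0,\,y v^2)$ is exactly $\tfrac{\phi^2}{y}\nabla y$, which is the field the paper uses when it writes $\int_{(\partial B_1)^+}\tfrac{\partial y}{\partial\nu}\tfrac{w^2}{y}=\int_{B_1^+}\nabla y\cdot\nabla(\tfrac{w^2}{y})$; and your residual term $y^2|\nabla v|^2$ equals $|\nabla\phi-\tfrac{\phi}{y}\nabla y|^2$, which is precisely the non-negative defect that appears in the paper's last line. The only cosmetic differences are that you carry out the substitution $\phi=yv$ explicitly, you state the computation for general $r$ rather than normalizing to $r=1$, and you spell out the density/trace-convergence step that the paper leaves implicit.
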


\begin{proof}
The proof is included for clarity. Suppose $w \in C_c^{1}(\overline{B_1^+}\setminus \oR)$. The subsequent calculation leads to the desired inequality:
\begin{align*}
\int_{(\partial B_1)^+} w^2 &= \int_{(\partial B_1)^+} \frac{\partial y}{\partial \nu} \frac{w^2}{y} = \int_{B_1^+} \nabla y \cdot \nabla \big(\frac{w^2}{y} \big) \\
& = \int_{B_1^+} |\nabla w|^2 - \big|\nabla w - \frac{w}{y}\nabla y \big|^2 \le \int_{B_1^+} |\nabla w|^2.
\end{align*}
\end{proof}

We are now ready to present the proof of a Liouville-type theorem applicable to non-negative subharmonic functions defined in the upper half-plane.
\begin{theorem}\label{thm:liouville}
Consider $w \in \tilde{H}^{1,0}(\pR)$ that satisfies the condition $\Delta w \ge 0$ in the upper half-plane in the following weak sense:
\begin{equation}\label{eq:sub}
\int_{\pR} \nabla \eta \cdot  \nabla  w \le 0
\end{equation}
for any non-negative $\eta \in C_c^1(\pR)$. Additionally, assume that $0\le w \le C(y^d+1)$ for some positive constant $C$ and for $d \in [0,1)$.
Then, $w$ is identically zero.
\end{theorem}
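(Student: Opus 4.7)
The plan is to establish an Almgren-type monotonicity formula for the half-sphere mass $H(r) := \int_{(\partial B_r)^+} w^2\,d\sigma$, specifically that $H(r)/r^{n+2}$ is non-decreasing in $r$, and then combine it with the sublinear growth hypothesis to conclude $H \equiv 0$, hence $w \equiv 0$.

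First, by differentiating in polar coordinates one obtains the standard identity
\[
H'(r) = \frac{n}{r} H(r) + 2\int_{(\partial B_r)^+} w\,\partial_r w\,d\sigma,
\]
valid for a.e.\ $r$. Separately, one relates the Dirichlet energy $D(r) := \int_{B_r^+}|\nabla w|^2$ to this boundary flux: an integration-by-parts argument, applied to the weak subharmonicity inequality and tested against a suitable approximation of $w \cdot \chi_{B_r^+}$, yields
\[
D(r) + \int_{B_r^+} w\, d(\Delta w) = \int_{(\partial B_r)^+} w\,\partial_r w\,d\sigma.
\]
The contribution from the flat portion of $\partial B_r^+$ vanishes because $w$ has zero trace on $\oR$ (Proposition \ref{lemma:trace}), and the bulk term on the left is non-negative since $\Delta w$ is a non-negative Radon measure on $\pR$ and $w \ge 0$. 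Hence $D(r) \le \int_{(\partial B_r)^+} w\,\partial_r w\,d\sigma$, and combining with the $H'$ identity gives $H'(r) \ge \tfrac{n}{r}H(r) + 2D(r)$.

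Next I would invoke the sharp Poincar\'e inequality proved just above, $rD(r) \ge H(r)$, to upgrade this to $H'(r) \ge \tfrac{n+2}{r}H(r)$, which is precisely the monotonicity $\bigl(H(r)/r^{n+2}\bigr)'\ge 0$. Finally, the pointwise growth $0 \le w \le C(y^d+1)$ gives, by parametrizing the half-sphere via $z = r\omega$ and integrating $\omega_{n+1}^{2d}$ over $S^n_+$, the bound $H(r) \le Cr^{n+2d}$ for $r \ge 1$. Since $d < 1$, this forces $H(r)/r^{n+2} \le Cr^{2d-2} \to 0$ as $r\to\infty$. A non-decreasing non-negative function with limit zero at infinity must vanish identically, so $H\equiv 0$, and therefore $w \equiv 0$.

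The main obstacle is technical: justifying the integration by parts, since $w$ lies only in $\tilde H^{1,0}_{\mathrm{loc}}$ and $\Delta w$ is a priori only a non-negative Radon measure. The remedy is to exploit the density property built into the definition of $\tilde H^{1,0}$: approximate $w$ by functions in $C_c^1(\overline{B_R^+}\setminus\oR)$, test the weak inequality against cutoffs of the form $\chi_r(|z|)\kappa_\e(y)$ where $\kappa_\e$ keeps the test function compactly supported in $\pR$, and pass to the limit using the growth bound on $w$ to control the near-boundary error terms. A secondary point is that $H$ is only absolutely continuous in $r$ (via coarea applied to $w^2 \in W^{1,1}_{\mathrm{loc}}$), but the monotonicity of $H/r^{n+2}$ still follows from the a.e.\ differential inequality.
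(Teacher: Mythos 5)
Your proof is correct and follows essentially the same approach as the paper: both test the weak inequality against a radial cutoff of $w$, invoke the Poincar\'e lemma $r\int_{B_r^+}|\nabla w|^2 \ge \int_{(\partial B_r)^+}w^2$ to get the monotonicity of $\int_{(\partial B_r)^+}w^2 / r^{n+2}$, and then use the sublinear growth bound to force this quantity to vanish. The only cosmetic differences are that the paper works with $H(r)$ and $E(r)$ already normalized by powers of $r$, and it derives the energy--flux inequality $\int_{B_r^+}|\nabla w|^2 \le \tfrac12\int_{(\partial B_r)^+}\partial_r(w^2)$ directly by plugging $\eta_{r,\epsilon}w$ into \eqref{eq:sub} and sending $\epsilon\to 0$, rather than routing through Green's identity with $\Delta w$ as a non-negative Radon measure.
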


\begin{proof}

Let $H(r) = \frac{1}{r^{n}} \int_{(\partial B_r)^+ }  w^2 $ and $E(r) = \frac{1}{r^{n-1}} \int_{ B_r^+ }  |\nabla w|^2$. We test the inequality (\ref{eq:sub}) with $\eta = \eta_{r, \epsilon} w$ where $\eta_{r, \epsilon}$ is a cut-off function defined by:
$$
\eta_{r, \epsilon}(z) = \begin{cases} 1  \text{ for }  |z|\le r\\
1- \frac{|z|-r}{\epsilon}  \text{ for }  r \le |z|\le r+\epsilon \\
0 \text{ for } |z|\ge r+\epsilon.
\end{cases}$$
Consequently, we obtain
\begin{equation*}
\int_{\pR} \eta_{r, \epsilon} |\nabla w|^2 \le -\int_{\pR} w \nabla \eta_{r, \epsilon} \cdot \nabla w = \frac{1}{2\epsilon} \int_{B_{r+\epsilon}^+ \setminus B_r^+} \partial_r w^2 
\end{equation*}
As $\epsilon \rightarrow 0$, the left-hand side converges to $\int_{B_r^+} |\nabla w|^2$, and the right-hand side converges to $\frac{1}{2} \int_{(\partial B_r)^+} \partial_r w^2$ for almost every $r$.
Therefore, we establish the inequality
$$E(r) = \frac{1}{r^{n-1}} \int_{ B_r^+ }  |\nabla w|^2 \le \frac{1}{2r^{n-1}}  \int_{(\partial B_r)^+} \partial_r w^2 = \frac{r}{2} H'(r).$$
for almost every $r$. The Poincar\'{e} inequality then implies:
$$H(r) \le E(r) \le \frac{r}{2} H'(r).$$
This leads to the differential inequality $\big(\frac{1}{r^2}H(r)\big)' \ge 0$. Integrating this inequality, we find that for any fixed $r_0$ and for all $r \ge r_0$, 

$$\frac{1}{r_0^2} H(r_0) \le \frac{1}{r^2} H(r).$$
Given the bound $0\le w \le C(y^d+1)$, it is easy to see that $H(r) \le C (r^{2d}+1)$.
Consequently, we deduce
$$ \frac{1}{r_0^2} H(r_0) \le \frac{1}{r^2} H(r) \le C (r^{2d-2} + r^{-2}).$$
Since $d \in [0, 1)$, letting $r$ tend to infinity leads to the conclusion that $H(r_0) = 0$. This implies $w = 0$ everywhere.

\end{proof}

\begin{remark}
The condition $d<1$ is necessary because  the function $y$ is a non-negative harmonic function vanishing on $\oR$.
\end{remark}

\begin{remark}
The same technique can be applied even when the assumption $w \ge 0$ is not present, provided that  $w$ solves a PDE of the form $\Delta w = F(w)w$, with $F$ being non-negative and sufficiently regular.
\end{remark}

\section{Conformal infinity with positive scalar curvature}
In this section, we provide a detailed proof for Theorem 1.1.
\subsection{Previously known properties}
We start by summarizing some established geometric properties relevant to CCE manifolds under the condition of positive scalar curvature at the conformal infinity.

For $1\le \gamma \le \frac{n}{2}+1$, the positivity of $R_h$ implies the positivity of scalar curvature $R_s^*$.
\begin{proposition} \cite[Proposition 6.4]{CC} \label{lemma:sign2}
Let $(X, M , g_+)$ be a CCE manifold with conformal infinity $(M, [h])$. Suppose the scalar curvature $R_h$ of $(M, h)$ is positive. Then, the scalar curvature $R_s^*$ of the adapted metric $g_s^*$ is positive when $1\le \gamma \le \frac{n}{2}+1$.
\end{proposition}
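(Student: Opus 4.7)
The plan is to exploit the identity
\[ R_s^* = \frac{2n(s - \tfrac{n+1}{2})(1 - |\nablas \rho_s|^2)}{\rho_s^2}, \]
which is obtainable by a direct conformal change computation for $g_s^* = \rho_s^2 g_+$, using $\mathrm{Ric}_{g_+} = -n g_+$ and the Poisson equation $-\Delta_+ v_s = s(n-s) v_s$ with $v_s = \rho_s^{n-s}$. Since $s - \tfrac{n+1}{2} = \gamma - \tfrac12 \geq \tfrac12 > 0$ in the stated range, the task reduces to establishing the gradient bound $|\nablas \rho_s|^2 \leq 1$ throughout $\mathring{X}$; equivalently, one must show that $\Phi \coloneqq 1 - |\nablas \rho_s|^2$ is non-negative.

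For the boundary analysis, I would use a geodesic defining function $y$ and the scattering expansion $v_s = y^{n-s}(1 + f_{(2)} y^2 + \cdots)$, where $f_{(2)}$ is expressible in terms of the Schouten tensor of the conformal infinity and hence in terms of $R_h$. Expanding $\rho_s = v_s^{1/(n-s)}$ and $|\nablas \rho_s|^2$ to the required order and invoking $R_h > 0$ shows that $\Phi / \rho_s^2$ extends continuously to $M$ with strictly positive boundary value, so $R_s^* > 0$ in a collar of $M$. For the interior, I would derive the companion identity $\Delta_{g_s^*} \rho_s = -\tfrac{s R_s^*}{n(2s-n-1)} \rho_s$ from the conformal Laplacian formula and the PDE for $v_s$, and then apply the Bochner formula to $\rho_s$ in the metric $g_s^*$, substituting $\mathrm{Ric}_{g_s^*}$ from the conformal change rule. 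This should yield a second-order elliptic inequality for $\Phi$ to which the maximum principle applies, ruling out a negative interior minimum and completing the argument.

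The main obstacle is matching signs in the Bochner computation. The Ricci curvature of $g_s^*$ is not a priori well-signed, so one must carefully isolate the terms coming from the conformal change to ensure that the coefficients in the resulting inequality for $\Phi$ have the right sign for the maximum principle. This sign-matching is what restricts the range to $\gamma \in [1, \tfrac{n}{2}+1]$; outside this range, in particular for the regime $0 < \gamma < 1$ treated by Theorem \ref{mainthm2.1} of the present paper, the relevant coefficient flips and a more delicate argument is needed.
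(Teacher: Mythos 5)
This statement is cited verbatim from \cite[Proposition 6.4]{CC}; the paper supplies no proof of its own, so there is no in-paper argument to compare yours against directly. The closest internal benchmark is the paper's proof of the companion result for $0<\gamma<1$ (Theorem \ref{lemma:sign}), which follows exactly the strategy you outline: reduce to $f_s \coloneqq 1 - |\nablas\rho_s|^2 \ge 0$ via Lemma \ref{lemma:formulas}(2), derive a PDE for $f_s$ from Bochner's formula applied to $\rho_s$ in $g_s^*$ (this is precisely equation (\ref{eq:fs})), establish strict positivity near $M$ from the scattering expansion, and run a maximum principle. Your companion identity $\Delta_s^*\rho_s = -s f_s/\rho_s$ matches Lemma \ref{lemma:formulas}(1), so the ingredients you assemble are the right ones.

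The genuine gap is that the maximum-principle step does not close as you describe it, and you acknowledge this (``sign-matching'') without resolving it. The Bochner-derived equation reads
\[
\Delta_s^* f_s = -2|\nablas^2\rho_s|^2 + (2s-n+1)\frac{\nablas\rho_s}{\rho_s}\cdot\nablas f_s - 2(2s-n)\,f_s\,\frac{|\nablas\rho_s|^2}{\rho_s^2},
\]
and the zeroth-order coefficient $-2(2s-n)|\nablas\rho_s|^2/\rho_s^2$ is strictly negative for every $\gamma>0$, including $\gamma\ge 1$ --- this is the ``wrong'' sign for a minimum principle applied to a possibly negative $f_s$. At a hypothetical negative interior minimum one only gets $0\le \Delta_s^* f_s = -2|\nablas^2\rho_s|^2 + (\text{positive term})$, which is not a contradiction, so ``ruling out a negative interior minimum'' fails outright. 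The paper's proof of Theorem \ref{lemma:sign} surmounts this by normalizing to $f_s/\rho_s^{2\gamma}$, whose equation (\ref{eq:mp}) has a zeroth-order term $2\gamma s f_s^2/\rho_s^{2\gamma+2}$ that is \emph{quadratic} in $f_s$ and hence favorably signed once $f_s\ge 0$, and then runs a continuity argument in $\gamma$ anchored at $\gamma=1/2$ (Lemma \ref{lemma:wz}) with Serrin's strong maximum principle; your sketch contains no analogue of this bootstrapping device, and it is the crux. A secondary issue: for $\gamma=1$ (and other integer $\gamma$) the scattering expansion of $v_s$ acquires a $\log y$ term, so your claim that $f_s/\rho_s^2$ extends continuously to $M$ needs separate justification at that endpoint.
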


This proposition establishes a direct correlation between the positivity of scalar curvature in the boundary geometry and the scalar curvature of the adapted metric within a specific range of $\gamma$.

\begin{theorem}\cite[Theorem 1.2]{GQ} \label{thm:asym}
Assume $0<\gamma<1$. The fractional curvature $Q_{2\gamma} \coloneqq \frac{2}{n-2\gamma} P_{2\gamma}(1)$ is positive, provided that the scalar curvature $R_h$ of $(M, h)$ is positive.
\end{theorem}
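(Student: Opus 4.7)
The plan is to recast the positivity of $Q_{2\gamma}$ as a statement about the sign of the scattering coefficient in the asymptotic expansion of $v_s = \mathcal{P}(s)1$, and then pin down that sign through a weighted maximum principle in which $R_h > 0$ enters as a positive zeroth-order term. The approach is geometric and local near the boundary, so the geodesic defining function $y$ and Fefferman-Graham form of $g_+$ are the natural coordinates throughout.

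First I would reduce to a scattering estimate. Using $v_s = y^{n-s} F + y^s G$ with $F,G \in C^\infty(\overline{X})$, $F|_M = 1$ and $G|_M = S(s)1$, one reads off
\[
P_{2\gamma}(1) \;=\; 2^{2\gamma}\,\frac{\Gamma(\gamma)}{\Gamma(-\gamma)}\, G|_M .
\]
For $\gamma\in(0,1)$ the prefactor is strictly negative, since $\Gamma(\gamma)>0$ and $\Gamma(-\gamma) = -\Gamma(1-\gamma)/\gamma < 0$, and $\tfrac{2}{n-2\gamma}>0$. Hence the theorem reduces to the pointwise inequality $G|_M(p) < 0$ for every $p\in M$, i.e.\ to showing that $v_s$ lies strictly below its leading Dirichlet extension $y^{n-s}F$ in a collar of $M$.

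Next I would derive the degenerate elliptic equation satisfied by $u \coloneqq v_s/y^{n-s}$. A direct computation, analogous to the one carried out in the paper for the flat case that led to $\Delta_a w_s = 0$ with $a = 1-2\gamma$, shows that in a collar neighborhood of $M$ the scattering equation becomes
\[
-\mathrm{div}_{g^*}\!\bigl(y^{1-2\gamma}\,\nabla^{g^*} u\bigr) + y^{1-2\gamma}\,c(y)\,u \;=\; 0,
\]
where $g^* = y^2 g_+$ compactifies with $g^*|_M = h$, and $c(y)$ has an asymptotic expansion whose leading boundary value is a positive universal constant times $R_h$. (This is the CCE analogue of the Caffarelli--Silvestre extension; the appearance of $R_h$ comes from matching terms in the Fefferman--Graham expansion of $g_y$ after the substitution $v_s = y^{n-s}u$.) In particular, $u|_M = 1$ and $u = 1 + G|_M\, y^{2\gamma} + o(y^{2\gamma})$.

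The third step is a maximum-principle/Hopf argument. Setting $\tilde w = u - 1$, one has $\tilde w|_M = 0$ and
\[
-\mathrm{div}_{g^*}\!\bigl(y^{1-2\gamma}\,\nabla^{g^*}\tilde w\bigr) + y^{1-2\gamma}\,c(y)\,\tilde w \;=\; -y^{1-2\gamma}\,c(y).
\]
Because $c(y)>0$ in a sufficiently thin collar $\{0<y<\delta\}$ by continuity from $R_h>0$ on $M$, the right-hand side is non-positive, and a standard weighted maximum principle in the $y^{1-2\gamma}$-Muckenhoupt setting of Propositions~\ref{lemma:inclusion}--\ref{lemma:whardy} forces $\tilde w \leq 0$ throughout the collar, with equality only on $M$. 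The weighted Hopf lemma for $\Delta_{1-2\gamma}$-type operators (essentially the same tool used in Theorems~\ref{thm:stv2}--\ref{stv2}) then gives $\lim_{y\to 0} y^{1-2\gamma}\,\partial_y \tilde w < 0$; computing this limit from the expansion of $\tilde w$ yields $2\gamma\, G|_M < 0$, so $G|_M < 0$ pointwise and $Q_{2\gamma}>0$.

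The main obstacle is Step~2: extracting the precise form of the potential $c(y)$ and verifying that its boundary value is a positive multiple of $R_h$ rather than some other curvature combination. This amounts to inserting the Fefferman--Graham expansion $g_y = h + y^2 h^{(2)} + \cdots$ (with $h^{(2)}$ governed by the Schouten tensor of $h$) into the formula for $\Delta_+$, performing the substitution $v_s = y^{n-s}u$, and identifying which tensorial trace contracts to $R_h$ at leading order. A secondary difficulty is ensuring the maximum principle applies despite the degeneration of the weight $y^{1-2\gamma}$ at $\{y=0\}$; this is handled by the weighted Hopf-lemma / Hardy-inequality machinery of Sire--Terracini--Vita already invoked in Section~2 of the paper.
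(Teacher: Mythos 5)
This theorem is not proved in the paper at all: it is quoted verbatim from Guillarmou--Qing \cite[Theorem 1.2]{GQ}, so there is no in-paper argument to compare against. Judged on its own terms, your proposal gets the setup right. The reduction in Step 1 is correct: since $\Gamma(\gamma)>0$, $\Gamma(-\gamma)<0$ and $n-2\gamma>0$, positivity of $Q_{2\gamma}$ is equivalent to $G|_M<0$, i.e.\ to $\alpha_s<0$ in the expansion $v_s=y^{n-s}(1+\alpha_s y^{2\gamma}+\cdots)$ (the $F$-part only contributes at order $y^2$, which is subleading for $\gamma<1$). Step 2 is also essentially correct, and in fact the computation you flag as "the main obstacle" is already carried out in Section 6 of the paper: with $\bar g=y^2g_+$ and $y$ the geodesic defining function, $u=v_s/y^{n-s}$ satisfies $\mathrm{div}_{\bar g}(y^a\nabla u)=-(n-s)y^{a-1}(\Delta_{\bar g}y)\,u$, and the Einstein identity $\Delta_{\bar g}y=-\tfrac{1}{2n}yR$ together with $R|_M=\tfrac{n}{n-1}R_h$ gives exactly your potential $c=\tfrac{n-s}{2n}R$, positive on a collar.

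The genuine gap is in Step 3. You apply the maximum principle for $L\tilde w\coloneqq\mathrm{div}(y^a\nabla\tilde w)-y^ac\,\tilde w$ on the thin collar $\{0<y<\delta\}$, where $L\tilde w=y^ac>0$. This only yields $\sup\tilde w\le\max\bigl(0,\sup_{\{y=\delta\}}\tilde w\bigr)$: the collar has \emph{two} boundary components, and you have no control whatsoever on $\tilde w=v_s/y^{n-s}-1$ along the inner one $\{y=\delta\}$. Without an a priori bound $v_s\le y^{n-s}(1+o(1))$ there --- which is essentially the conclusion you are after --- the conclusion $\tilde w\le 0$ does not follow. Pushing the argument to all of $X$ does not work either: the geodesic defining function $y$, the identity $\Delta_{\bar g}y=-\tfrac{1}{2n}yR$, and the positivity of $R_{\bar g}$ (Lemma \ref{lemma:Rpositive}) are all only available in the collar, so the sign of the potential is lost in the interior. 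This is precisely why the Guillarmou--Qing result is nontrivial; their proof requires a global mechanism (exploiting positivity of $-\Delta_+-s(n-s)$ on all of $X$ via Lee-type global defining functions) rather than a purely local collar comparison. The concluding weighted Hopf-lemma step is fine in principle (it would give $2\gamma\,G|_M<0$ once $\tilde w\le 0$ and $\tilde w\not\equiv 0$ are known), but as written the key inequality $\tilde w\le 0$ is unproven.
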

Given that $P_{2\gamma} \coloneqq 2^{2\gamma} \frac{\Gamma(\gamma)}{\Gamma(-\gamma)} S(\frac{n}{2} + \gamma )$ and noting that $\Gamma(-\gamma)$ is negative while $\Gamma(\gamma)$ is positive, it follows that the asymptotic formula for $v_s$ is $y^{n-s}(1+ \alpha_{s}y^{2\gamma} + \cdots)$ where $\alpha_s$ is a negative function defined on the boundary. This observation  will be utilized later in the proof of Theorem 1.1.

Finally, in the case $\gamma = \frac{1}{2}$, the scalar curvature $R_s^*$ is identically zero. Instead, the positivity of $R_h$ implies the following:
\begin{lemma}\cite[Lemma 5.4]{WZ} \label{lemma:wz}
If $R_h>0$, then for $\gamma = \frac{1}{2}$, $\big( \frac{1-|\nablas \rho_s|^2}{\rho_s} \big) >0$ on $X$, and $R_s^* \equiv 0$.
\end{lemma}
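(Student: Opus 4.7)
My plan splits the statement into two parts. First, the identity $R^*_s \equiv 0$ for $\gamma=\tfrac12$ is immediate from the general formula $R^*_s = 2n(s-\tfrac{n+1}{2})(1-|\nablas \rho_s|^2)/\rho_s^2$ recorded in the introduction: the prefactor $s-\tfrac{n+1}{2}$ vanishes at $\gamma = \tfrac12$. Second, for the positivity of $f := (1-|\nablas \rho_s|^2)/\rho_s$ on $X$, my plan is to show (i) $f$ is $g^*_s$-superharmonic on $\mathring{X}$, and (ii) the boundary value of $f$ on $M$ is strictly positive; I then conclude by the weak maximum principle.

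The core of step (i) is a PDE derivation. Rewriting the Einstein equation $\mathrm{Ric}_{g_+}=-n g_+$ via the conformal transformation law for the Ricci tensor applied to $g^*_s = \rho_s^2 g_+$ produces the standard identity
\[
\rho_s^2\,\mathrm{Ric}^*_s + (n-1)\rho_s\,\mathrm{Hess}_{g^*_s}\rho_s + \bigl[\rho_s\Delta^*_s\rho_s - n|\nablas\rho_s|^2 + n\bigr]g^*_s = 0.
\]
Taking the trace and imposing $R^*_s=0$ gives $\Delta^*_s\rho_s = -\tfrac{n+1}{2}f$; rearranging the whole identity for the Hessian yields
\[
\mathrm{Hess}_{g^*_s}\rho_s = -\frac{\rho_s\,\mathrm{Ric}^*_s}{n-1} - \frac{f}{2}\,g^*_s.
\]
Applying the divergence, using the standard commutation identity $\mathrm{div}(\mathrm{Hess}_{g^*_s}\rho_s) = d(\Delta^*_s\rho_s) + \mathrm{Ric}^*_s(\nablas\rho_s,\cdot)^{\flat}$ together with the contracted second Bianchi identity $\mathrm{div}\,\mathrm{Ric}^*_s = \tfrac12 dR^*_s = 0$, I expect the two occurrences of $\mathrm{Ric}^*_s(\nablas\rho_s,\cdot)$ to combine into the clean gradient formula
\[
df = \frac{2}{n-1}\,\mathrm{Ric}^*_s(\nablas\rho_s,\cdot)^{\flat}.
\]
Taking the divergence once more and using both $\mathrm{div}\,\mathrm{Ric}^*_s=0$ and $\mathrm{tr}_{g^*_s}\mathrm{Ric}^*_s = R^*_s = 0$ to eliminate the extraneous terms, then contracting $\mathrm{Ric}^*_s$ against the Hessian formula above, I obtain the PDE
\[
\Delta^*_s f = -\frac{2\rho_s}{(n-1)^2}\,|\mathrm{Ric}^*_s|^2 \;\le\; 0,
\]
so that $f$ is superharmonic on $\mathring{X}$.

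For step (ii) I would extract the boundary value of $f$ from the Poisson-operator asymptotics of $v_s$. In a geodesic normal form $g_+ = y^{-2}(dy^2+g_y)$, the expansion $v_s = y^{(n-1)/2}\bigl(1 + \alpha_s y + O(y^{1+\delta})\bigr)$ with $\alpha_s = S(\tfrac{n+1}{2})(1)$ gives $\rho_s = y + \tfrac{2\alpha_s}{n-1}y^2 + O(y^3)$, and a direct computation of $|\nablas\rho_s|^2$ from $g^*_s = (\rho_s/y)^2(dy^2+g_y)$ yields the boundary value $f|_M = -\tfrac{4\alpha_s}{n-1}$. Combining Theorem \ref{thm:asym} with the normalization $P_1 = 2\,\Gamma(\tfrac12)/\Gamma(-\tfrac12)\,S(\tfrac{n+1}{2})$ — which gives the clean relation $Q_1 = -\tfrac{2\alpha_s}{n-1}$ — the hypothesis $R_h > 0$ forces $\alpha_s<0$ everywhere on $M$, hence $f|_M = 2Q_1 > 0$.

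Combining (i) and (ii): $f$ extends continuously (in fact smoothly, by scattering regularity) to $X$, is superharmonic on $\mathring{X}$ with respect to the smooth Riemannian metric $g^*_s$, and is strictly positive on $M$. The weak maximum principle then gives $\min_X f = \min_M f|_M > 0$, which is the desired positivity. The main obstacle will be the algebraic cleanup in the derivation of $\Delta^*_s f \le 0$: the cancellations depend crucially on $R^*_s = 0$ in two separate places — the traceless form of the Hessian identity, and the simultaneous vanishing of $\mathrm{div}\,\mathrm{Ric}^*_s$ and $\mathrm{tr}\,\mathrm{Ric}^*_s$ — which is precisely why this argument runs so cleanly at the distinguished value $\gamma=\tfrac12$ and does not extend verbatim to general $\gamma$.
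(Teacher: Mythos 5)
Your proposal is correct. Note that the paper does not prove this lemma itself --- it is quoted from \cite{WZ} --- and your argument (derive $\Delta_s^* f = -\tfrac{2\rho_s}{(n-1)^2}|\mathrm{Ric}_s^*|^2 \le 0$ using $R_s^*\equiv 0$ and the contracted Bianchi identity, identify the boundary value $f|_M = 2Q_1>0$ via the scattering expansion and Theorem \ref{thm:asym}, then apply the minimum principle) is essentially the argument of the cited source; all the algebraic identities and the normalization $Q_1=-\tfrac{2\alpha_s}{n-1}$ check out. The only point worth tightening is the final maximum-principle step: since $g_s^*$ and $f$ have limited regularity at $M$, one should run the minimum principle on $\{y\ge\epsilon\}$ and use the continuity of $f$ up to $M$ together with $f|_M>0$ to conclude, exactly as the paper does in the proof of Theorem \ref{lemma:sign}.
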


\subsection{Proof for Theorem \ref{mainthm2.1}}

First we recall some conformal transformation rules for curvature quantities.

\begin{lemma} \label{lemma:formulas}
Let $\frac{n}{2}<s<n$ and $f_s \coloneqq 1- |\nablas \rho_s|^2$. Denote covariant derivative, scalar curvature, traceless Ricci curvature, and Ricci curvature of $g_s^*$ by $\nablas, R_s^*, E_s^*, Ric_s^*$ respectively. Then we have following identities:

(1) $\Delta_s^* \rho_s  = -s(\frac{f_s}{\rho_s}).$

(2) $R_s^*  =2n (s- \frac{n+1}{2}) \frac{f_s}{\rho_s^2}.$

(3) $E_s^*  = -(n-1)\rho_s^{-1} ( \nablas^2 \rho_s + \frac{s}{n+1} \frac{f_s}{\rho_s} g_s^*).$

(4) $Ric_s^*  = -(n-1) \rho_s^{-1} (\nablas^2 \rho_s + \frac{s}{n+1} \frac{f_s}{\rho_s} g_s^* ) +(\frac{s-\frac{n+1}{2}}{n+1}) \frac{f_s}{\rho_s^2} g_s^*.$

(5) $ Ric_s^*(\nablas \rho_s , \nablas, \rho_s ) = -(n-1)\frac{\nablas^2 \rho_s (\nablas \rho_s, \nablas \rho_s)}{\rho_s} + [\frac{s-\frac{n+1}{2}}{n+1} - \frac{(n-1)s}{n+1}] \frac{f_s}{\rho_s^2} |\nablas \rho_s|^2.$

\end{lemma}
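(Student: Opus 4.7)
The plan is to derive all five identities from a single input, namely that the adapted conformal factor satisfies $v_s = \rho_s^{n-s}$ with $-\Delta_+ v_s = s(n-s) v_s$, and then to invoke the standard conformal transformation rules for Laplacian, Ricci, and scalar curvature under $g_s^* = \rho_s^2 g_+$. Throughout, I will freely use the identifications
$$|\nabla_+ \rho_s|^2_{g_+} = \rho_s^2 |\nablas \rho_s|^2_{g_s^*} = \rho_s^2(1 - f_s), \qquad g_+ = \rho_s^{-2}\, g_s^*,$$
so that every expression can be translated between the two metrics.

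First I would prove (1). Expanding $\Delta_+ v_s = \Delta_+(\rho_s^{n-s})$ by the chain rule and matching with the eigenvalue equation produces the pointwise identity
$$\Delta_+ \rho_s + (n-s-1)\,\frac{|\nabla_+ \rho_s|^2_{g_+}}{\rho_s} = -s\,\rho_s,$$
which, after substituting $|\nabla_+ \rho_s|^2_{g_+} = \rho_s^2(1-f_s)$, gives a closed expression for $\Delta_+\rho_s$. The conformal Laplacian rule $\Delta_s^* u = \rho_s^{-2}\bigl(\Delta_+ u + (n-1)\langle \nabla_+ \log\rho_s,\nabla_+ u\rangle_{g_+}\bigr)$ applied to $u = \rho_s$ then collapses to $-s f_s/\rho_s$ after straightforward cancellation. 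For (2) I would insert $R_{g_+} = -n(n+1)$ into the standard conformal change formula for scalar curvature and reuse the above expression for $\Delta_+ \log\rho_s$; the constant terms in the $\rho_s^{-2}$ prefactor cancel, and the surviving $f_s$-coefficient simplifies to $2n(s - \frac{n+1}{2})$.

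Identities (3) and (4) are the main work. I would apply the tensorial conformal change of Ricci
$$Ric_s^* = Ric_{g_+} - (n-1)\bigl(\nabla^2_+\log\rho_s - d\log\rho_s\otimes d\log\rho_s\bigr) - \bigl(\Delta_+\log\rho_s + (n-1)|\nabla_+\log\rho_s|^2_{g_+}\bigr)g_+,$$
substitute $Ric_{g_+} = -n g_+$, expand the logarithms, and then use the change-of-connection identity
$$\nabla^2_+\rho_s = \nablas^2\rho_s + \frac{2}{\rho_s}\, d\rho_s\otimes d\rho_s - \rho_s(1-f_s)\,g_+$$
together with $g_+ = \rho_s^{-2} g_s^*$ to move everything onto the $g_s^*$ side. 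After collection, the pure trace part must reproduce $R_s^*/(n+1)$ as computed in (2), and the remainder identifies the traceless Ricci as in (3); formula (4) is then just $E_s^* + \frac{R_s^*}{n+1} g_s^*$. Finally, (5) is obtained by direct evaluation of (4) on the pair $(\nablas\rho_s,\nablas\rho_s)$, using $g_s^*(\nablas\rho_s,\nablas\rho_s) = |\nablas\rho_s|^2$.

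The main obstacle is bookkeeping rather than conceptual: each of the Ricci, Hessian, and Laplacian transformations generates several terms of the types $d\rho_s\otimes d\rho_s/\rho_s^2$ and $g_s^*\,|\nablas\rho_s|^2/\rho_s^2$, and the content of the lemma is that, once (1) is used to eliminate all Laplacian terms, these cancel cleanly to leave only $\nablas^2\rho_s$, the scalar $f_s/\rho_s^2$, and multiples of $g_s^*$. Once (1) is in place, the remaining identities reduce to algebra in these two tensor types.
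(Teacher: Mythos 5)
The paper does not prove these identities itself; it refers the reader to equations (2.2), (2.3), and (4.3)--(4.5) of Wang--Zhou \cite{WZ}. Your plan is a correct, self-contained version of the derivation one would carry out there: use $v_s=\rho_s^{n-s}$ together with $-\Delta_+ v_s = s(n-s)v_s$ to pin down $\Delta_+\rho_s$, translate everything between the two metrics via $|\nabla_+\rho_s|^2_{g_+}=\rho_s^2(1-f_s)$ and $g_+=\rho_s^{-2}g_s^*$, and then feed the conformal-change formulas for $\Delta$, $\nabla^2$, $Ric$, $R$ under $g_s^*=\rho_s^2 g_+$ into the Einstein identities $Ric_{g_+}=-n g_+$, $R_{g_+}=-n(n+1)$. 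Your change-of-connection identity for $\nabla_+^2\rho_s$ is right, the $d\rho_s\otimes d\rho_s$ terms do cancel as you claim, and the computation reproduces (1), (2), (3) exactly, with Ricci landing on
\[
Ric_s^* \;=\; -\frac{n-1}{\rho_s}\,\nablas^2\rho_s \;+\; (s-n)\,\frac{f_s}{\rho_s^2}\,g_s^*,
\]
whose $g_s^*$-trace gives (2) and whose traceless part gives (3). One bonus of actually pushing the computation through, as you propose, is that it flags a discrepancy in the printed (4) and (5): decomposing the display above as $E_s^*+\frac{R_s^*}{n+1}g_s^*$ shows the second coefficient in (4) should be $\frac{2n(s-\frac{n+1}{2})}{n+1}$ rather than $\frac{s-\frac{n+1}{2}}{n+1}$ (as written, the trace of (4) is $(s-\frac{n+1}{2})\frac{f_s}{\rho_s^2}=\frac{1}{2n}R_s^*$, not $R_s^*$), and the bracket in (5) should correspondingly collapse to $s-n$. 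So your route is not only valid but, being explicit, would catch the dropped factor of $2n$ that a citation conceals.
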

\begin{proof}
For the detailed derivations and proofs of these formulas, refer to equations (2.2), (2.3), (4.3), (4.4), and (4.5) of \cite{WZ}.
\end{proof}

\begin{theorem} \label{lemma:sign}
Let $(X^{n+1}, \partial X = M , g_+)$ be a CCE manifold with conformal infinity $(M, [h])$. Assume that the scalar curvature $R_h$ of $(M, h)$ is positive. For $0<\gamma<1$, it follows that $|\nablas \rho_s|^2 < 1$ on $\mathring{X}$, or equivalently, $\mathrm{sgn}(R_s^*) = \mathrm{sgn}(\gamma - \frac{1}{2})$.
\end{theorem}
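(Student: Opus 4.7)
By Lemma 4.5(2), $R_s^* = 2n(\gamma - \tfrac{1}{2})\,f_s/\rho_s^2$ where $f_s := 1 - |\nablas \rho_s|^2$, so the sign assertion on $R_s^*$ is equivalent to $f_s > 0$ on $\mathring{X}$. The borderline case $\gamma = \tfrac{1}{2}$ is precisely Lemma 4.4. For the remaining range $\gamma \in (0,1)\setminus\{\tfrac{1}{2}\}$, I would combine a boundary-layer computation with an interior maximum principle.

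\textbf{Boundary asymptotics.} Theorem 4.2 gives $Q_{2\gamma} > 0$. Since $\Gamma(-\gamma) < 0$ and $\Gamma(\gamma) > 0$ for $\gamma \in (0,1)$, the identity $P_{2\gamma} = 2^{2\gamma}\Gamma(\gamma)/\Gamma(-\gamma)\cdot S(n/2+\gamma)$ forces the leading coefficient in the scattering expansion $v_s = y^{n-s}(1 + \alpha_s y^{2\gamma} + O(y^2))$ to satisfy $\alpha_s|_M < 0$. Passing to $\rho_s = v_s^{1/(n-s)}$ and using the geodesic normal form $g_+ = y^{-2}(dy^2 + g_y)$ to expand $|\nablas \rho_s|^2 = (y/\rho_s)^2|d\rho_s|^2_{dy^2 + g_y}$ yields
\[
f_s = -\frac{4\gamma\,\alpha_s}{n-s}\, y^{2\gamma} + O\bigl(y^{2\gamma+1}\bigr),
\]
which is strictly positive in a collar neighborhood of $M$ and vanishes to order $y^{2\gamma}$ at $M$.

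\textbf{Interior maximum principle.} Compactness of $X$ combined with the boundary positivity implies that if $f_s > 0$ fails on $\mathring{X}$, then $f_s$ attains a non-positive minimum at some interior $p$. To rule this out, I would apply Bochner's identity to $\rho_s$ on $(X, g_s^*)$,
\[
-\tfrac{1}{2}\Delta_s^* f_s = |\nablas^2 \rho_s|^2 + \langle \nablas \rho_s,\, \nablas \Delta_s^* \rho_s\rangle + Ric_s^*(\nablas \rho_s, \nablas \rho_s),
\]
substitute the identities of Lemma 4.5 (notably $\Delta_s^* \rho_s = -s f_s/\rho_s$ together with (3)-(5)), and decompose $|\nablas^2 \rho_s|^2 = (\Delta_s^* \rho_s)^2/(n+1) + |E_s^*\text{-part}|^2$ in order to convert the traceless Hessian contribution into a non-negative $|E_s^*|^2$ term. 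After simplification one expects a linear elliptic inequality of the form
\[
\Delta_s^* f_s + \langle Y,\, \nablas f_s\rangle + c(z)\, f_s \;\leq\; 0,
\]
whose evaluation at $p$ (where $\nablas f_s(p) = 0$ and $\Delta_s^* f_s(p) \geq 0$) combined with the sign of $c$ forces $f_s(p) > 0$, contradicting the assumption. A technically cleaner variant is to run the same Bochner argument on the renormalized quantity $f_s/\rho_s^{2\gamma}$, which by the boundary expansion extends continuously and strictly positively to $M$, so that Hopf's strong maximum principle applies directly.

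\textbf{Main obstacle.} The core difficulty is the algebraic reduction of the Bochner PDE: one must simultaneously track the traceless-Hessian contribution (non-negative via Lemma 4.5(3)) and the Ricci term whose coefficient in Lemma 4.5(5) depends delicately on $\gamma$, and then verify that the resulting zeroth-order coefficient $c(z)$ has a definite sign uniformly on $\mathring{X}$. Doing so in a way that handles both sub-ranges $\gamma < \tfrac{1}{2}$ and $\gamma > \tfrac{1}{2}$ simultaneously - ideally by choosing a single auxiliary function whose boundary value and PDE structure are compatible with the $\gamma = \tfrac{1}{2}$ limit treated in Lemma 4.4 - is the principal computational challenge of the proof.
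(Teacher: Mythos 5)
Your boundary asymptotics (using Theorem 4.2 together with the sign of $\Gamma(-\gamma)$ to get $\alpha_s<0$ and hence $f_s=-\tfrac{4\gamma\alpha_s}{n-s}y^{2\gamma}+\cdots>0$ near $M$) and the idea of normalizing by $\rho_s^{2\gamma}$ to get a quantity that extends continuously and strictly positively to the boundary are both exactly the paper's ingredients, and your Bochner starting point is the source of the paper's identity (4.2). The quantity $\phi:=f_s/\rho_s^{2\gamma}$ solves
\begin{equation*}
\Delta_s^*\phi \;=\; -\frac{2|\nablas^2\rho_s|^2}{\rho_s^{2\gamma}} \;+\; (1-2\gamma)\,\nablas\phi\cdot\frac{\nablas\rho_s}{\rho_s} \;+\; 2\gamma s\,\frac{f_s^2}{\rho_s^{2\gamma+2}},
\end{equation*}
which is precisely the reduction you are aiming for. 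But the obstacle you flag at the end is a genuine gap, not a computational nuisance: writing the nonlinear term as $c(z)\,\phi$ gives $c=2\gamma s\,f_s/\rho_s^2$, whose sign \emph{is} the unknown. If $\phi$ attains a negative interior minimum at $p$, then at $p$ the nonlinear term is non-negative and competes against $-2|\nablas^2\rho_s|^2/\rho_s^{2\gamma}\le 0$; the pointwise evaluation $\Delta_s^*\phi(p)\ge 0$, $\nablas\phi(p)=0$ yields no contradiction. Nor can you invoke Serrin's strong maximum principle directly, since that requires the zeroth-order coefficient $-c$ to be non-positive, i.e.\ $f_s\ge 0$, which is exactly what you have not yet proved. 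A one-shot minimum-principle argument therefore does not close.

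The paper resolves this with a continuity argument in $\gamma\in(0,1)$, anchored at $\gamma=\tfrac12$ where Lemma 4.4 already gives $\phi>0$. Openness of the set of good $\gamma$ follows from compactness of $\overline X$, the strict positivity of $\phi$ on $M$, and continuity of $s\mapsto v_s$. For closedness one only needs to rule out: $\phi\ge 0$ with an interior zero. Under that hypothesis $f_s\ge 0$, so $c=2\gamma s f_s/\rho_s^2\ge 0$, and the operator $L\phi:=\Delta_s^*\phi-(1-2\gamma)\tfrac{\nablas\rho_s}{\rho_s}\cdot\nablas\phi-c\phi\le 0$ now has a cooperative sign; Serrin's strong maximum principle (applied on $X\setminus\{y<\epsilon\}$) forces $\phi\equiv 0$, contradicting the boundary asymptotics. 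So your Bochner computation and normalization are the right tools, but the argument must be wrapped in a continuity scheme rather than run directly, because the sign of the zeroth-order coefficient is available only as an inductive hypothesis in the closedness step, not a priori.
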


\begin{proof}
A straightforward calculation using Lemma \ref{lemma:formulas} gives the following equation for $f_s$:
\begin{equation} \label{eq:fs}
\Delta_s^* f_s = - 2|\nablas^2 \rho_s |^2 + (2s-n+1) \frac{\nablas \rho_s}{\rho} \cdot \nablas f_s - 2(2s-n) f_s\frac{|\nablas \rho_s |^2}{\rho_s^2}.
\end{equation}

Next, we compute the asymptotic expansion of $f_s$. Let $y$ be the geodesic defining function, denote $g \coloneqq y^2 g_+$ and $\nabla$ be the covariant derivative with respect to $g$.
According to Theorem \ref{thm:asym}, we have
\begin{equation*}
v_s = y^{n-s}(1+ \alpha_{s}y^{2\gamma} + \cdots)
\end{equation*}
for some negative function $\alpha_s$ defined on $M$. Direct computation leads to
\begin{equation*}
\rho_s = y(1+\frac{\alpha_s}{n-s} y^{2\gamma} +\cdots )    
\end{equation*}

which, in turn, gives us
\begin{align*}
f_s = 1-|\nablas \rho_s|^2 = & 1- (\frac{y}{\rho_s})^2 |\nabla \rho_s |^2 \\
= & 1 - (1 + \frac{\alpha_s}{n-s} y^{2\gamma} )^{-2} (1 + (1+2\gamma) \frac{\alpha_s}{n-s} y^{2\gamma} ) ^2 \\
= & - 4\gamma \frac{\alpha_s}{n-s}  y^{2\gamma} + \cdots
\end{align*}
Given that $\alpha_s < 0$, the expression  $\frac{f_s}{\rho_s^{2\gamma}}$ is strictly positive on the boundary $M$. This is the underlying  reason for choosing the exponent $2\gamma$ in the denominator of the quantity $\frac{f_s}{\rho_s^{2\gamma}}$.

We now compute the equation for $\frac{f_s}{\rho_s^{2\gamma}}$. Using the identity (\ref{eq:fs}), it is easy to check
\begin{equation}\label{eq:mp}
\Delta_s^*(\frac{f_s}{\rho_s^{2\gamma}} ) =  - \frac{2 |\nablas^2 \rho_s | ^2 }{\rho_s^{2\gamma}} +(1-2\gamma) \nablas (\frac{f_s}{\rho_s^{2\gamma}})\cdot (\frac{\nablas \rho_s}{\rho_s}) + 2\gamma s \frac{f_s^2}{\rho_s^{2\gamma+2}}.
\end{equation}

To prove that $\frac{f_s}{\rho_s^{2\gamma}}$ is positive, we employ the continuity method. For the case $s = \frac{n}{2} + \frac{1}{2}$, or equivalently $\gamma = \frac{1}{2}$, Lemma \ref{lemma:wz} gives the positivity.  We extend this to all other values of $\gamma \in (0,1)$ 

 Since $v_s$ is a continuous family with respect to $s$, the set of $s$ values for which $\frac{f_s}{\rho_s^{2\gamma}}>0$ is open. To prove closedness, assume $\frac{f_s}{\rho_s^{2\gamma}}\ge 0$ and $\frac{f_s}{\rho_s^{2\gamma}}$ attains 0 at some point in $X$. Given the strict positivity of $\frac{f_s}{\rho_s^{2\gamma}}$ on the boundary(as derived from its asymptotic expansion), any zero of $\frac{f_s}{\rho_s^{2\gamma}}$ must occur at an interior point of $X$.  We apply Serrin's strong maximum principle(see \cite[Theorem 2.10]{HL}) to the equation (\ref{eq:mp}) in the region $X\setminus \{y<\epsilon\}$ for a sufficiently small $\epsilon$. This application leads to a contradcition, implying that $f_s>0$ throughout $X$.

Consequently, the sign of $R_s^*$ follows from the identity $R_s^*  = 2n(\gamma -\frac{1}{2}) \frac{f_s}{\rho_s^2}$.
\end{proof}

\begin{corollary}
Under the same assumption to that of Theorem \ref{lemma:sign}, we have $\lim_{y \rightarrow 0} \rho_s^{2-2\gamma} R_s^* = C_{n,s} Q_{2\gamma}$ and $|R_s^*|\ge C \rho_s^{2\gamma-2}$ for some positive constant $C$. For $0<\gamma<1/2$, the constant $C$ can be chosen as $C = C_{n,s} \inf_M Q_{2\gamma}$ where $C_{n,s}$ is a constant depending only on $n$ and $s$.
\end{corollary}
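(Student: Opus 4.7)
My plan is to derive all three assertions from the asymptotic expansion of $v_s$ established in the proof of Theorem \ref{lemma:sign} together with a minimum principle for $u := f_s/\rho_s^{2\gamma}$. First I would identify the coefficient $\alpha_s$ in $v_s = y^{n-s}(1 + \alpha_s y^{2\gamma} + \cdots)$ with scattering data: writing $v_s = \mathcal{P}(s)(1) = y^{n-s} F + y^s G$ with $F|_M = 1$ admitting only even-power corrections in $y$ and $G|_M = S(s)(1)$, the condition $0 < 2\gamma < 2$ forces $\alpha_s = S(s)(1)$. The definitions $P_{2\gamma} = 2^{2\gamma}\Gamma(\gamma)/\Gamma(-\gamma)\cdot S(s)$ and $Q_{2\gamma} = \tfrac{2}{n-2\gamma}P_{2\gamma}(1)$ then give $\alpha_s = \frac{n-2\gamma}{2^{2\gamma+1}}\frac{\Gamma(-\gamma)}{\Gamma(\gamma)}Q_{2\gamma}$. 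Combining this with the expansions $f_s = -\tfrac{4\gamma\alpha_s}{n-s}y^{2\gamma}+\cdots$ and $\rho_s = y(1+O(y^{2\gamma}))$ from the proof of Theorem \ref{lemma:sign}, and with the identity $\rho_s^{2-2\gamma} R_s^* = 2n(\gamma - \tfrac{1}{2})\,u$ coming from Lemma \ref{lemma:formulas}(2), yields $\lim_{y\to 0}\rho_s^{2-2\gamma}R_s^* = C_{n,s}Q_{2\gamma}$ with an explicit constant $C_{n,s}$.

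For the uniform lower bound, compactness of $X$ suffices: by Theorem \ref{lemma:sign} the function $u$ is strictly positive on $\mathring X$, and by the previous step (combined with the positivity of $Q_{2\gamma}$ from Theorem \ref{thm:asym}) it extends continuously to $\overline X$ with strictly positive boundary values. Hence $u$ attains a positive minimum $m > 0$, and $|R_s^*| = 2n|\gamma-\tfrac{1}{2}|\,u\,\rho_s^{2\gamma-2}$ immediately gives $|R_s^*| \ge 2n|\gamma-\tfrac{1}{2}|m \cdot \rho_s^{2\gamma-2}$.

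To sharpen the constant in the range $0 < \gamma < 1/2$, I would apply a minimum principle to $u$. Rewriting equation (\ref{eq:mp}) as
\begin{equation*}
\Delta_s^* u - (1-2\gamma)\frac{\nablas \rho_s}{\rho_s}\cdot\nablas u = -\frac{2|\nablas^2 \rho_s|^2}{\rho_s^{2\gamma}} + 2\gamma s \frac{f_s^2}{\rho_s^{2\gamma+2}},
\end{equation*}
the Cauchy-Schwarz inequality applied to the trace of the symmetric tensor $\nablas^2\rho_s$, combined with Lemma \ref{lemma:formulas}(1), gives $|\nablas^2\rho_s|^2 \ge (\Delta_s^*\rho_s)^2/(n+1) = s^2 f_s^2/((n+1)\rho_s^2)$. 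Substituting this estimate, the right-hand side is bounded above by $2s(\gamma - \tfrac{s}{n+1}) f_s^2/\rho_s^{2\gamma+2}$, which is non-positive precisely when $(n+1)\gamma \le s$, equivalently $\gamma \le 1/2$. Thus $u$ is a supersolution of the linear elliptic operator $L := \Delta_s^* - (1-2\gamma)\nablas\rho_s/\rho_s\cdot\nablas$, whose coefficients are smooth on $\mathring X$. Applying Serrin's weak minimum principle (cf.\ \cite[Theorem 2.10]{HL}) on the exhausting subdomains $X\setminus\{y < \epsilon\}$ and passing to the limit $\epsilon \to 0$ via continuity of $u$ up to $M$ yields $u \ge \inf_M u|_M$ throughout $\mathring X$. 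Substituting the explicit formula $u|_M = -4\gamma\alpha_s/(n-s)$ and expressing $\alpha_s$ in terms of $Q_{2\gamma}$ from the first step produces the refined bound $|R_s^*| \ge C_{n,s}\inf_M Q_{2\gamma}\cdot\rho_s^{2\gamma-2}$. The principal obstacle is the degeneracy of the coefficient $(1-2\gamma)\nablas\rho_s/\rho_s$ as $\rho_s \to 0$, which prevents a direct global application of the maximum principle; this is handled exactly as in the proof of Theorem \ref{lemma:sign} by working on subdomains bounded away from $M$ and exploiting continuity of $u$ up to the boundary.
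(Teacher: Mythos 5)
Your proposal follows essentially the same route as the paper: it uses the factorization $R_s^* = C_{n,s}(f_s/\rho_s^{2\gamma})\rho_s^{2\gamma-2}$ together with the boundary asymptotics of $f_s/\rho_s^{2\gamma}$ for the first two claims, and for $0<\gamma<1/2$ it applies the trace inequality $|\nablas^2\rho_s|^2 \ge (\Delta_s^*\rho_s)^2/(n+1)$ with $\Delta_s^*\rho_s = -sf_s/\rho_s$ in equation (\ref{eq:mp}) to obtain a differential inequality suitable for the minimum principle, exactly as the paper does. The additional detail you supply (explicit identification of $\alpha_s$ with scattering data, the compactness argument for the intermediate uniform bound, and the care over the degenerating drift coefficient via exhausting subdomains) is a faithful expansion of steps the paper leaves implicit.
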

\begin{proof}
The first statement is derived from the identity: 
$$R_s^*  = 2n (s- \frac{n+1}{2}) \frac{f_s}{\rho_s^2} = C_{n,s} \big( \frac{f_s}{\rho_s^{2\gamma}} \big) \cdot \rho_s^{2\gamma-2}.$$
For $0<\gamma <1/2$, using the identity (\ref{eq:mp}), we obtain:
\begin{align*}
\Delta_s^*(\frac{f_s}{\rho_s^{2\gamma}} )& =  - \frac{2 |\nablas^2 \rho_s | ^2 }{\rho_s^{2\gamma}} +(1-2\gamma) \nablas (\frac{f_s}{\rho_s^{2\gamma}})\cdot (\frac{\nablas \rho_s}{\rho_s}) + 2\gamma s \frac{f_s^2}{\rho_s^{2\gamma+2}} \\
& \le  - \big(\frac{2}{n+1} \big)\cdot \frac{(\Delta_s^* \rho_s)^2 }{\rho_s^{2\gamma}} +(1-2\gamma) \nablas (\frac{f_s}{\rho_s^{2\gamma}})\cdot (\frac{\nablas \rho_s}{\rho_s}) + 2\gamma s \frac{f_s^2}{\rho_s^{2\gamma+2}} \\
& = (1-2\gamma) \nablas (\frac{f_s}{\rho_s^{2\gamma}})\cdot (\frac{\nablas \rho_s}{\rho_s}) + \big(\frac{2sn(\gamma-1/2)}{n+1} \big)\cdot \frac{f_s^2}{\rho_s^{2\gamma+2}} \\
&\le  (1-2\gamma) \nablas (\frac{f_s}{\rho_s^{2\gamma}})\cdot (\frac{\nablas \rho_s}{\rho_s}).
\end{align*}
The application of the maximum principle leads to the second statement.
\end{proof}

\begin{remark}
The identity $\lim_{y \rightarrow 0} \rho_s^{2-2\gamma} R_s^* = C_{n,s} Q_{2\gamma}$ suggests that $\rho_s^{2-2\gamma} |Rm_s^*|$ is an appropriate quantity to control near the boundary as $|Rm_s^*|$ blows up as $y \rightarrow 0$.
\end{remark}

\section{Liouville-type theorems on the hyperbolic space}

In this section, we establish Liouville-type theorems on the standard upper half-plane model of hyperbolic space.
\subsection{Liouvile-type theorems for adapted metrics with $0<\gamma<\frac{n}{2}$}

Throughout this subsection, we consider positive functions $v_s$ and denote $\rho_s = v_s^{1/(n-s)}$ for $\frac{n}{2} < s < n$.  Additionally, we define the metric $g_s^* \coloneqq \frac{\rho_s^2}{y^2} |dz|^2$ on $\pR$.
Our starting point is a lemma concerning the condition $1-|\nabla^*_s \rho_s|^2 \ge 0$.
\begin{lemma}\label{lemma:ydirection}
Let $\frac{n}{2} < s < n$ and $v_s$ be a positive function defined on $\pR$ satisfying $1-|\nabla^*_s \rho_s|^2 \ge 0$ . Define $w_s \coloneqq 1-\frac{v_s}{y^{n-s}}$ and assume $w\equiv 0$ on $\oR$. Then $\partial_y w_s$ is non-negative, and $0 \le w_s<1$ on $\pR$.
\end{lemma}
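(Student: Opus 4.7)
The plan is to translate the geometric gradient bound $|\nablas \rho_s|^2 \le 1$ into an explicit pointwise differential inequality on $w_s$ and then integrate in the $y$-direction. Since $g_s^* = (\rho_s/y)^2 |dz|^2$, the bound $|\nablas \rho_s|^2 \le 1$ is equivalent in Euclidean terms to $|\nabla \rho_s|^2 \le \rho_s^2/y^2$. Also, the positivity of $v_s$ immediately gives $w_s = 1-v_s/y^{n-s} < 1$ everywhere, so one of the two claims is free and, just as importantly, the factor $(1-w_s)$ that will appear below is strictly positive.

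Next I would exploit the explicit formula $\rho_s = y(1-w_s)^{1/(n-s)}$ coming from $v_s = y^{n-s}(1-w_s)$ and $\rho_s = v_s^{1/(n-s)}$. Differentiating in $y$ gives
$$\partial_y \rho_s = (1-w_s)^{1/(n-s)}\Bigl[1 - \frac{y\,\partial_y w_s}{(n-s)(1-w_s)}\Bigr],$$
and the tangential derivatives $\partial_{x_i}\rho_s$ are proportional to $\partial_{x_i} w_s$ with a factor involving $(1-w_s)^{1/(n-s)-1}$. Substituting into $|\nabla \rho_s|^2 \le (1-w_s)^{2/(n-s)}$ and dividing through by the positive quantity $(1-w_s)^{2/(n-s)}$ yields
$$\Bigl[1 - \frac{y\,\partial_y w_s}{(n-s)(1-w_s)}\Bigr]^2 + \frac{y^2\,|\nabla_x w_s|^2}{(n-s)^2(1-w_s)^2} \le 1.$$
Discarding the non-negative tangential term forces the square to be bounded by $1$, hence $0 \le \frac{y\,\partial_y w_s}{(n-s)(1-w_s)} \le 2$. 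Since $y>0$, $n-s>0$, and $1-w_s>0$, I conclude $\partial_y w_s \ge 0$ pointwise on $\pR$.

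To finish, I would integrate this sign in the $y$-variable: for each fixed $x$, the map $y \mapsto w_s(x,y)$ is non-decreasing, and by the hypothesis $w_s\equiv 0$ on $\oR$ it tends to $0$ as $y\to 0^+$, so $w_s(x,y)\ge 0$ for every $y>0$. I do not anticipate any real obstacle: the only step that deserves a moment of care is the legitimacy of dividing by $(1-w_s)^{2/(n-s)}$, which is ensured precisely by the observation made at the start that $w_s<1$, and the fact that the algebraic manipulation with the square is sign-sensitive, extracting the useful inequality $\partial_y w_s \ge 0$ rather than a useless absolute-value bound.
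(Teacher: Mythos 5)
Your proposal is correct and takes essentially the same route as the paper: both translate $|\nablas\rho_s|^2\le 1$ into the Euclidean bound $|\nabla\rho_s|\le\rho_s/y$, extract the one-sided inequality $\partial_y\rho_s\le\rho_s/y$ by dropping the tangential contribution, recast it as $\partial_y w_s\ge 0$, and then integrate in $y$ from the zero boundary data. The only cosmetic difference is that you perform the change of variables $\rho_s=y(1-w_s)^{1/(n-s)}$ before applying the inequality (arriving at the squared bracket and discarding $|\nabla_x w_s|^2$), whereas the paper keeps the computation in terms of $v_s$ and only passes to $w_s$ at the last step.
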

\begin{proof}
Observe that $1-|\nabla^*_s \rho_s|^2 \ge 0$ implies $|\nablas v_s^{1/(n-s)}| \le 1$, which further implies $\frac{y}{v_s^{1/(n-s)}} |\nabla v_s^{1/(n-s)}| \le 1$ where $\nabla$ is the convariant derivative with  respect to the Euclidean metric.
 In particular, we have
$$\frac{v_s^{1/(n-s)}}{y} \ge  |\nabla v_s^{1/(n-s)}| \ge \partial_y v_s^{1/(n-s)}=  v_s^{1/(n-s) -1 } (\frac{1}{n-s}) \partial_y v_s,$$
which implies that $(n-s)\frac{v_s}{y} \ge \partial_y v_s$ since $s<n$.

Therefore, we deduce $$\partial_y w_s = \partial_y \big( 1-\frac{v_s}{y^{n-s}} \big) =\frac{1}{y^{n-s}}((n-s)\frac{v_s}{y} - \partial_y v_s ) \ge 0 $$.
$w_s<1$ follows from the fact that$v_s$ is positive. Furthermore, since  $w_s \equiv 0$ on $\oR$ and $w_s$ is non-decreasing in the $y$-direction, $w_s$ must be non-negative.
\end{proof}

Now we are ready to prove Liouville-type theorems.
\begin{theorem} \label{thm:2}
Let $\gamma \in (0,1)$. Suppose $v_s$ is a positive solution to the PDE $-\Delta_+ v_s -s(n-s)v_s = 0$ on $\pR$.   Define $w_s \coloneqq 1-\frac{v_s}{y^{n-s}}$. Assume the following conditions:

(1)  $w_s \in \tilde{H}^{1,a}(\pR)$.

(2) $|\nablas \rho_s | \le 1$. This is equivalent to $\mathrm{sgn}(R^*_s) = \mathrm{sgn}(\gamma - \frac{1}{2})$.

Then, $v_s = y^{n-s}$ or equivalently, $g_s^* \equiv |dz|^2$.
\end{theorem}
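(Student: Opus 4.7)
The plan is to reduce Theorem \ref{thm:2} to a direct application of the Liouville-type result of Sire, Terracini and Vita quoted as Theorem \ref{thm:stv2}. The conformal transformation rule for the Laplacian already recorded in equation (\ref{eq:degenerate1}) converts the hyperbolic equation $-\Delta_+ v_s - s(n-s) v_s = 0$ into the degenerate Euclidean equation $\Delta_a w_s = 0$ on $\pR$ with $a = 1 - 2\gamma$. Since $\gamma \in (0,1)$, the parameter $a$ lies in $(-1,1)$, so $y^a$ is an $\mathcal{A}_2$-Muckenhoupt weight and the hypotheses of Theorem \ref{thm:stv2} are within reach.

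Concretely, I would proceed in three steps. First, I would verify that $w_s$ is a weak solution of $\Delta_a w_s = 0$ in $\tilde{H}^{1,a}(\pR)$: this follows from condition (1) together with the pointwise computation converting the equation for $v_s$ into the equation (\ref{eq:degenerate1}) for $w_s$. Second, Proposition \ref{lemma:trace} (applied with $a \in (-1,1)$) yields $w_s|_{\oR} \equiv 0$, supplying the homogeneous Dirichlet data needed by Theorem \ref{thm:stv2}. Third, I need a polynomial growth bound $|w_s(z)| \le C(1+|z|^d)$ for some $d \in [0, 1-a) = [0, 2\gamma)$. Here condition (2), $|\nablas \rho_s| \le 1$, is precisely the hypothesis of Lemma \ref{lemma:ydirection}, which produces the uniform bound $0 \le w_s < 1$ on all of $\pR$. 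In particular $d = 0$ is admissible for every $\gamma \in (0,1)$, so Theorem \ref{thm:stv2} applies and forces $w_s \equiv 0$. Translating back, $v_s = y^{n-s}$ and $g_s^* \equiv |dz|^2$.

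The one point that deserves explicit confirmation is the stated equivalence between the pointwise inequality $|\nablas \rho_s| \le 1$ and the sign condition on $R_s^*$, since Lemma \ref{lemma:ydirection} is phrased in terms of the former. This equivalence follows directly from Lemma \ref{lemma:formulas}(2): the identity
\[
R_s^* \;=\; 2n\left(s - \tfrac{n+1}{2}\right) \cdot \frac{1 - |\nablas \rho_s|^2}{\rho_s^2}
\]
has prefactor with sign $\mathrm{sgn}(\gamma - \tfrac{1}{2})$, so $|\nablas \rho_s| \le 1$ is the same as $\mathrm{sgn}(R_s^*) = \mathrm{sgn}(\gamma - \tfrac{1}{2})$ (with equality on the exceptional set where both sides vanish).

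I do not expect any serious obstacle: the deepest input, Theorem \ref{thm:stv2}, is cited as a black box, and the remaining work is bookkeeping. The only soft spot is matching the notion of weak solution used in \cite{STV2} with our formulation in $\tilde{H}^{1,a}(\pR)$; since $a \in (-1,1)$ places us squarely in the Muckenhoupt regime of Fabes-Kenig-Serapioni theory, interior regularity and the trace statement of Proposition \ref{lemma:trace} handle this matching routinely.
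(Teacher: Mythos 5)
Your argument coincides with the paper's own proof: derive $\Delta_a w_s = 0$ with $a = 1-2\gamma \in (-1,1)$, use Proposition~\ref{lemma:trace} to get the zero trace, invoke Lemma~\ref{lemma:ydirection} to bound $0 \le w_s < 1$ (so $d=0$ suffices in the growth hypothesis), and conclude via Theorem~\ref{thm:stv2}. The additional remark about the equivalence of $|\nablas\rho_s|\le 1$ with $\mathrm{sgn}(R_s^*)=\mathrm{sgn}(\gamma-\tfrac{1}{2})$ via Lemma~\ref{lemma:formulas}(2) is consistent with what the paper states after Theorem~\ref{mainthm2.1}.
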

\begin{proof}
Firstly, we compute the PDE for $w_s$ with respect to the Euclidean metric:
$$\Delta_a w_s = 0.$$
Given $w_s \in \tilde{H}^{1,a}(\pR)$, we infer that $w_s \equiv 0$ on $\oR$ by Proposition \ref{lemma:trace}. Now Lemma \ref{lemma:ydirection} assures us that $0 \le w_s < 1$. 

As $w_s \in \tilde{H}^{1, a}(\pR)$, and satisfies $0 \le w_s \le 1$, we can apply Theorem \ref{thm:stv2} to conclude that $w_s \equiv 0$, which implies $v_s = y^{n-s}$.
\end{proof}

\begin{theorem} \label{thm:1}
Let $s \coloneqq \frac{n}{2} + \gamma$ for some $\gamma \in [\frac{1}{2}, \frac{n}{2})$. Suppose $v_s$ is a positive solution to the PDE $-\Delta_+ v_s -s(n-s)v_s = 0$  on $\pR$. Define $w_s \coloneqq 1-\frac{v_s}{y^{n-s}}$. Assume the following conditions:

(1) $w_s \in \tilde{H}^{1,0}(\pR)$.

(2) $R^*_s $ is non-negative.

Then, $v_s = y^{n-s}$ or equivalently, $g_s^* \equiv |dz|^2$.
\end{theorem}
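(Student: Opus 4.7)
My plan is to reduce Theorem \ref{thm:1} to the subharmonic Liouville Theorem \ref{thm:liouville}. The conformal change rule for the Laplacian turns $-\Delta_+ v_s - s(n-s)v_s = 0$ into the classical identity
$$\Delta_a w_s \;=\; \operatorname{div}(y^a\nabla w_s) \;=\; 0 \qquad \text{on }\pR,\qquad a\coloneqq 1-2\gamma \le 0,$$
where smoothness of $w_s$ in $\pR$ comes from elliptic regularity for $v_s$. Since $0\in(-1,1)$, assumption (1) combined with Proposition \ref{lemma:trace} ensures $w_s\equiv 0$ on $\oR$.

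Next I would convert (2) into a pointwise bound on $|\nablas\rho_s|$. The identity $R_s^* = 2n(\gamma-\tfrac{1}{2})(1-|\nablas\rho_s|^2)/\rho_s^2$ together with $R_s^*\ge 0$ forces $|\nablas\rho_s|\le 1$ on $\pR$ whenever $\gamma>\tfrac{1}{2}$. Lemma \ref{lemma:ydirection} then immediately yields
$$\partial_y w_s \;\ge\; 0 \qquad \text{and} \qquad 0\le w_s<1.$$

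With these ingredients, the weak subharmonicity required by Theorem \ref{thm:liouville} is a short computation: expanding the operator identity gives $\Delta w_s=-\frac{a}{y}\partial_y w_s\ge 0$ pointwise in the open half-space (since $a\le 0$ and $\partial_y w_s\ge 0$), and testing against any non-negative $\eta\in C_c^1(\pR)$ — whose support is interior to $\pR$, so integration by parts carries no boundary term — produces $\int_{\pR}\nabla\eta\cdot\nabla w_s\le 0$, which is exactly (\ref{eq:sub}). The bound $0\le w_s<1$ trivially supplies the growth hypothesis of Theorem \ref{thm:liouville} with $d=0$, $C=1$, and that theorem forces $w_s\equiv 0$, i.e.\ $v_s=y^{n-s}$.

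The delicate point I foresee is the endpoint $\gamma=\tfrac{1}{2}$: the prefactor in the expression for $R_s^*$ vanishes, so assumption (2) becomes vacuous and Lemma \ref{lemma:ydirection} does not apply directly. In this case however $a=0$, and $w_s$ is a genuine harmonic function on $\pR$ with vanishing trace and locally finite Dirichlet energy; I would treat it separately, for instance by odd reflection across $\oR$ together with a sublinear growth bound extracted from the weighted Sobolev control of Proposition \ref{lemma:whardy}, after which Theorem \ref{thm:stv2} (at $a=0$) closes the argument.
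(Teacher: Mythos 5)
For $\gamma > \frac{1}{2}$ your proof is exactly the paper's: hypothesis (2) together with the identity $R_s^* = 2n(\gamma-\frac{1}{2})(1-|\nablas\rho_s|^2)/\rho_s^2$ gives $|\nablas\rho_s|\le 1$, Lemma \ref{lemma:ydirection} gives $\partial_y w_s\ge 0$ and $0\le w_s<1$, the expansion $\Delta w_s = \frac{2\gamma-1}{y}\partial_y w_s\ge 0$ gives pointwise subharmonicity, and Theorem \ref{thm:liouville} with $d=0$, $C=1$ finishes.

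You are right to single out $\gamma=\frac{1}{2}$, but the workaround you sketch cannot succeed, and in fact as stated the theorem needs a stronger hypothesis at that endpoint. When $s=\frac{n+1}{2}$ the factor $\gamma-\frac{1}{2}$ vanishes, so $R_s^*\equiv 0$ identically for \emph{every} positive solution of the PDE; assumption (2) then carries no information and in particular does not deliver $|\nablas\rho_s|\le 1$, so Lemma \ref{lemma:ydirection} is unavailable. Concretely, $v_s = y^{n-s}(1+\beta y)$ with $\beta>0$ is a positive solution whose $w_s = -\beta y$ is harmonic, has zero trace, and lies in $\tilde{H}^{1,0}(\pR)$, yet is nonzero. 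Since $|w_s|$ grows linearly, no sublinear bound $|w_s|\le C(1+|z|^d)$ with $d<1$ can be extracted from $\tilde{H}^{1,0}$ membership, and Theorem \ref{thm:stv2} never applies, so your odd-reflection plan stalls. At $\gamma=\frac{1}{2}$ the hypothesis $R_s^*\ge 0$ must be replaced by the pointwise gradient bound $|\nablas\rho_s|\le 1$ (as in Theorem \ref{thm:3}), after which your argument for $\gamma>\frac{1}{2}$ runs verbatim; note that the paper's own proof also invokes Lemma \ref{lemma:ydirection} at this endpoint without supplying its hypothesis, so it shares the same gap.
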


\begin{proof}
A straightforward computation shows that the PDE for $w_s$ with respect to the Euclidean metric is 
$$\Delta w_s = \frac{(2\gamma-1)}{y}\partial_y w_s.$$
Firstly, we observe from  Lemma \ref{lemma:ydirection} that $ 0 \le \partial_y w_s$, and $0 \le w_s \le 1$. Additionally, the linear PDE above indicates that $w_s \in C^{\infty}_{\mathrm{loc}}(\pR)$, implying that $\Delta w_s \ge 0$ at every point on $\pR$.

By applying Theorem \ref{thm:liouville}, we deduce that $w_s\equiv 0$.
\end{proof}

\begin{remark}
In the absence of constraints on the scalar curvature, alternative solutions to the PDE for $v_s$ exist. Specifically, $v_s = y^{n-s} + \beta y^s$ can also be a solution for a positive real number $\beta$.
\end{remark}

\begin{remark}
For Theorem \ref{thm:1}, it is essential to note that the assumption is $w_s \in \tilde{H}^{1, 0}(\pR)$ rather than $\tilde{H}^{1, a}(\pR)$.  While $\tilde{H}^{1, a}(\pR)$ might seem a more intuitive functional space for weak solutions,  the space $\tilde{H}^{1, 0}(\pR)$ is still sufficient to prove the Liouville-type theorem. Moreoever, according to Proposition \ref{lemma:inclusion}, $\tilde{H}^{1, a}(\pR) \subset \tilde{H}^{1, 0}(\pR)$ when $\gamma \ge \frac{1}{2}$. Yang's work \cite{Y} requires integrability of higher-order derivatives when $\gamma>1$; under suitable boundary assumptions, this implies that the solution is in $\tilde{H}^{1, 0}(\pR)$ by Proposition \ref{lemma:whardy}. Therefore, the regularity assumption on $w_s$ in our context appears to be minimal.
\end{remark}

\begin{remark}
Theorem \ref{stv2} specifies the behavior of $w$ near the boundary by imposing $|w(z)| \le C y^{2-a}(1+|z|^d)$, while Theorem \ref{thm:1} only assumes boundedness.
\end{remark}

\subsection{Liouville-type theorem for compactified metrics}

The proof of Theorem \ref{thm:1} reveals an interesting aspect: the necessity of an explicit PDE condition for establishing a Liouville-type theorem is not absolute. This insight allows us to formulate a version of the theorem that relies more on geometric and boundary conditions than on a specific PDE. 
\begin{theorem} \label{thm:3}
Let $\rho$ be a positive function defined on $(\pR, g_+ \coloneqq \frac{1}{y^2}|dz|^2)$ with following properties:

(1) $\rho \in C^2(\overline{\mathbb{R}}_+^{n+1})$

(2) $\rho(x, 0) = 0 $ and $\partial_y \rho(x, 0) = \lim_{y\rightarrow 0 }\frac{\rho}{y}(x, y) = 1$, or equivalently, the induced boundary metric on $\oR$ of the compactified manifold $(\overline{\mathbb{R}}_+^{n+1}, g^* \coloneqq \rho^2 g_+)$ is the Euclidean metric.

(3) The scalar curvature of the compactified metric, $R^*$, is non-negative.

(4) $|\nabla^* \rho| \le 1$.

Then, $\rho \equiv y$, or equivalently, $g^* \equiv |dz|^2$.

\end{theorem}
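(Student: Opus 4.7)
The plan is to mimic the strategy of the proof of Theorem \ref{thm:1}: introduce a non-negative auxiliary function, derive a subharmonicity-type inequality, and close with a Liouville-type argument. Throughout I write $w \coloneqq 1 - \rho/y$, so that $\rho = y(1-w)$.

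First I would establish the pointwise behaviour of $w$. Hypothesis (2) and the Taylor expansion $\rho(x,y) = y + O(y^2)$ at $y=0$ give that $w$ is continuous on $\overline{\pR}$ with $w|_{\oR} = 0$. Hypothesis (4), written as $|\nabla^*\rho|^2 = (y/\rho)^2|\nabla\rho|_E^2 \le 1$, implies $|\partial_y\rho| \le \rho/y$, and hence $\partial_y(\rho/y) = (y\partial_y\rho - \rho)/y^2 \le 0$. Combined with $\lim_{y\to 0}\rho/y = 1$, this forces $\rho/y \le 1$, so $0 \le w < 1$ and $\partial_y w \ge 0$ throughout $\pR$.

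Next I would exploit the scalar curvature hypothesis. A direct conformal-change computation starting from $g_+ = y^{-2}|dz|^2$ (scalar curvature $-n(n+1)$) gives, for any $g^* = \rho^2 g_+$, the identity
\begin{equation*}
R^* = -\frac{2n\,\Delta^*\rho}{\rho} - \frac{n(n+1)\bigl(1 - |\nabla^*\rho|^2\bigr)}{\rho^2}.
\end{equation*}
Hypotheses (3) and (4) together then yield $\Delta^*\rho \le 0$ on $\pR$. Rewriting this in Euclidean coordinates via $g^* = (\rho/y)^2 g_E$ and $\rho = y(1-w)$, a short calculation produces the equivalent inequality
\begin{equation*}
y\Delta w + (n+1)\partial_y w \;\ge\; \frac{(n-1)\,y\,|\nabla w|^2}{1-w} \;\ge\; 0,
\end{equation*}
i.e., $\Delta_{n+1} w = \mathrm{div}(y^{n+1}\nabla w) \ge 0$ in the classical sense.

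Finally I would close by a higher-dimensional lifting that reduces the weighted subharmonicity to the classical Liouville theorem. Set $W(x,\tilde y) \coloneqq w(x,|\tilde y|)$ for $(x,\tilde y) \in \mathbb{R}^n \times \mathbb{R}^{n+2}$. A standard radial computation gives $\Delta_{\mathbb{R}^{2n+2}} W = |\tilde y|^{-(n+1)}\,\Delta_{n+1} w \ge 0$ on $\mathbb{R}^{2n+2} \setminus (\mathbb{R}^n \times \{0\})$, so $W$ is bounded subharmonic off a subspace of Hausdorff dimension $n < 2n$, which is polar. Standard removable-singularity results for bounded subharmonic functions then extend $W$ to a bounded subharmonic function on all of $\mathbb{R}^{2n+2}$, and classical Liouville forces $W$ to be constant. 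Since $W \equiv 0$ on $\mathbb{R}^n \times \{0\}$, we conclude $W \equiv 0$, hence $w \equiv 0$, and therefore $\rho \equiv y$. The main obstacle I anticipate is the conformal book-keeping leading to $\Delta^*\rho \le 0$ and its Euclidean translation, where the signs and the weight $y^{n+1}$ need to be tracked carefully; an alternative route would be to adapt the Almgren-type monotonicity used in the proof of Theorem \ref{thm:liouville} directly to the $y^{n+1}$-weighted setting, but Proposition \ref{lemma:whardy} only supplies the needed weighted Poincar\'e for $a < 1$, so the higher-dimensional lift appears to be the cleanest uniform-in-$n$ route.
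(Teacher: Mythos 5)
The computation up to $\Delta_{n+1}w \ge 0$ (equivalently $\Delta^*\rho \le 0$) is correct, and the radial lift $W(x,\tilde y)=w(x,|\tilde y|)$ to $\mathbb{R}^n\times\mathbb{R}^{n+2}$ does turn this into classical subharmonicity off the subspace $\mathbb{R}^n\times\{0\}$, which is indeed polar. The fatal step is the final invocation of ``classical Liouville'': there is no Liouville theorem for bounded subharmonic functions on $\mathbb{R}^m$ when $m\ge 3$. In fact your exact setup admits a counterexample: take $W(x,\tilde y)=1-(1+|\tilde y|)^{-n}$ on $\mathbb{R}^{2n+2}$. This $W$ is bounded, nonnegative, vanishes identically on $\mathbb{R}^n\times\{0\}$, and a direct check gives $\Delta W = n(n+1)|\tilde y|^{-1}(1+|\tilde y|)^{-n-2} > 0$ off the subspace, with the sub-mean-value inequality holding on the subspace because $W\ge0=W|_{\mathbb{R}^n\times\{0\}}$; so $W$ is globally subharmonic yet nonconstant. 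Equivalently, $w=1-(1+y)^{-n}$ satisfies $\Delta_{n+1}w\ge 0$, $w|_{\oR}=0$, $0\le w<1$, but $w\not\equiv 0$. (This $w$ does not satisfy the hypotheses of the theorem, only the weakened consequence $\Delta_{n+1}w\ge0$ you reduced to, which shows that reduction throws away too much.) The point is that subharmonic functions have no minimum principle, so knowing $W\ge0$ achieves its infimum on the polar set buys nothing.

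The paper's proof sidesteps this by never leaving the half-space. It sets $u:=(\rho/y)^{(n-1)/2}$ and observes that hypothesis (3) \emph{alone} is equivalent to $\Delta\log(\rho/y)+\tfrac{n-1}{2}|\nabla\log(\rho/y)|^2\le0$, i.e.\ $\Delta u\le 0$ for the \emph{unweighted} Laplacian; hypothesis (4) is used separately, integrated in $y$, to give $\rho\le y$ and hence $0<u\le1$ with $u|_{\oR}=1$. Then Theorem~\ref{thm:liouville} applies to $1-u$. That theorem is a Liouville result for subharmonic functions \emph{on the half-space with zero trace on} $\oR$; the boundary condition feeds the sharp half-sphere Poincar\'e inequality that drives the Almgren-type monotonicity of $H(r)/r^2$, which is exactly the structure your lift destroys by making the boundary an interior set. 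If you want to keep your $w=1-\rho/y$, note that it does not satisfy $\Delta w\ge0$ (the $-(n+1)y^{-1}\partial_yw$ term can dominate near $\oR$), so you cannot feed it into Theorem~\ref{thm:liouville} directly either; passing to the power $(\rho/y)^{(n-1)/2}$ is precisely what absorbs the drift into the Laplacian and produces a bona fide unweighted subharmonic quantity.
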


\begin{proof}
We begin by considering the implication of the inequality $|\nabla^* \rho| \le 1$. This results in the following inequalities:
$$\partial_y \rho \le |\nabla \rho| = \frac{\rho}{y} |\nabla^* \rho| \le \frac{\rho}{y}.$$
Integrating this differential inequality gives us that for any $(x, y) \in \pR$ and $\epsilon>0$, we have:
$$\log {\rho (x, y)} - \log {\rho (x, \epsilon)} \le \log{y} - \log{\epsilon}.$$
This implies $\log {(\rho(x, y)/y)} \le \lim_{\epsilon \rightarrow 0} \log {(\rho(x, \epsilon)/\epsilon)} = \log {1} = 0$, or in other words, $\rho(x, y) \le y$.

The non-negativity of the scalar curvature $R^*$ is equivalent to the following condition dervied from the direct computation:
$$\Delta \log(\frac{\rho}{y}) + \frac{(n-1)}{2} |\nabla \log(\frac{\rho}{y})|^2 \le 0.$$

Define $w(x, y) \coloneqq \big( \frac{\rho(x, y)}{y} \big)^{\frac{(n-1)}{2}}$, which lies in $C^1(\overline{\pR}) \cap C^2_{\mathrm{loc}} (\pR)$.  The inequality for $\log(\frac{\rho}{y})$ is equivalent to $\Delta w \le 0$, and the inequality $\rho(x, y) \le y$ is equivalent to $0<w \le 1$. Moreover, $w(x, 0) = 1$.

Applying Theorem \ref{thm:liouville} to the function $1-w$ , we deduce that $w \equiv 1$. This completes the proof.
\end{proof}

\begin{remark}
The non-negativity of $R^*$ by itself is not a sufficient. A counter-example is provided by the function $\rho = y (1 + \log(1+y))^{\frac{2}{n-1}}$. For this specific choice of $\rho$, the function $w(x, y) = 1 + \log(1+y)$ used in the proof of the theorem satisfies $\Delta w \le 0$ and $w(x, 0) = 1$, yet it does not lead to the conclusion that $\rho \equiv y$.
\end{remark}

\begin{remark}
If $\rho$ is a polynomial in $y$ with degree $d$,  then it is easy to check that $\lim_{y \rightarrow \infty} |\nabla^* \rho| = d$. This observation suggests that the assumption $|\nabla^* \rho| \le 1$ serves as an intrinsic way (with respect to the metric $g^*$) to control the growth of $\rho$ as $y \rightarrow \infty$.
\end{remark}

A direct outcome of the Theorem \ref{thm:3} and its proof is a Liouville-type theorem for adapted metrics with $\gamma \in [\frac{n}{2}, \frac{n}{2}+1]$ including the Fefferman-Graham compactification.
\begin{corollary} 

Suppose $v$ is a positive solution to the PDE $-\Delta_+ v_n -n = 0$  on $\pR$. Define the metric $g_n^* \coloneqq \frac{e^{2v_n}}{y^2}|dz|^2$ and $w_n \coloneqq 1 - \frac{e^{v_n}}{y}$. Assume the following conditions:

(1) $w_n \in \tilde{H}^{1,0}(\pR)$.

(2) $R^*_n $ is non-negative.

Then, $w_n \equiv 0$ or equivalently, $g_n^* \equiv |dz|^2$.
\end{corollary}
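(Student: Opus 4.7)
The plan is to apply Theorem~\ref{thm:3} with $\rho \coloneqq e^{v_n}$, so that $g_n^* = \rho^2 g_+$ matches its framework. I then verify each of the four hypotheses of Theorem~\ref{thm:3}. Since $w_n \in \tilde{H}^{1,0}(\pR)$, Proposition~\ref{lemma:trace} gives $w_n|_{\oR} \equiv 0$, i.e.\ $\lim_{y\to 0}e^{v_n}/y = 1$, which translates to $\rho(x,0) = 0$ and $\partial_y \rho(x,0) = 1$, giving hypothesis (2). Interior smoothness of $\rho$ follows from elliptic regularity for the linear PDE $-\Delta_+ v_n = n$; the $C^2$ behavior up to the boundary, needed for hypothesis (1), is addressed below. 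Hypothesis (3) is directly assumed.

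The central step is to derive hypothesis (4), $|\nabla^* \rho| \le 1$, from hypothesis (3). Applying the standard conformal scalar curvature formula to $g_n^* = e^{2v_n}g_+$ in dimension $n+1$ yields
\[
R_n^* = e^{-2v_n}\bigl(R_{g_+} - 2n\,\Delta_+ v_n - n(n-1)\,|\nabla_+ v_n|^2\bigr).
\]
Substituting $R_{g_+} = -n(n+1)$ and $\Delta_+ v_n = -n$ gives
\[
R_n^* = n(n-1)\,e^{-2v_n}\bigl(1 - |\nabla_+ v_n|^2\bigr).
\]
A short computation using $\rho = e^{v_n}$ and $(g_n^*)^{ij} = e^{-2v_n}y^2\delta^{ij}$ shows that $|\nabla^* \rho|^2 = |\nabla_+ v_n|^2$, so $R_n^* \ge 0$ is equivalent to $|\nabla^* \rho| \le 1$. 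With all four hypotheses then verified, Theorem~\ref{thm:3} yields $\rho \equiv y$, i.e.\ $w_n \equiv 0$.

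The main obstacle is establishing the $C^2(\overline{\pR})$-regularity of $\rho$ near $\oR$ that Theorem~\ref{thm:3} assumes, since only $w_n \in \tilde{H}^{1,0}(\pR)$ is given. This can be handled either by a boundary bootstrap from the linear degenerate equation satisfied by $w_n$ together with its vanishing trace, or--avoiding pointwise regularity altogether--by redoing the relevant part of Theorem~\ref{thm:3} in a weak formulation: set $W \coloneqq (\rho/y)^{(n-1)/2}$, observe that $1 - W$ lies in the same weighted Sobolev class as $w_n$ near the boundary (since $1 - W \sim \tfrac{n-1}{2}w_n$ as $y \to 0$), derive $\Delta W \le 0$ distributionally from $R_n^* \ge 0$, and then apply Theorem~\ref{thm:liouville} to $1 - W$ to conclude $W \equiv 1$.
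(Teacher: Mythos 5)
Your argument coincides with the paper's: the paper likewise sets $\rho_n = e^{v_n}$, uses the conformal scalar curvature formula $R_n^* = \frac{n(n-1)}{\rho_n^2}\bigl(1-|\nabla_+ v_n|^2\bigr)$ to get $|\nabla_n^* \rho_n|\le 1$ from $R_n^*\ge 0$, reads off the boundary condition from the trace of $w_n$, and then runs the argument of Theorem~\ref{thm:3}. One difference worth noting: the paper says ``follow the same approach as in Theorem~\ref{thm:3}'' rather than literally invoking it, precisely because hypothesis~(1) there ($\rho\in C^2(\overline{\pR})$) is not part of the corollary's assumptions; your suggestion to rerun the Theorem~\ref{thm:3} computation in a weak form on $W=(\rho/y)^{(n-1)/2}$, using $1-W\sim\tfrac{n-1}{2}w_n$ near $\oR$ to place $1-W$ in the correct Sobolev class and then applying Theorem~\ref{thm:liouville}, is the cleaner way to make this precise and matches the paper's intent.
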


\begin{proof}
Let us define $\rho_n = e^{v_n}$. The conformal transformation rule for the scalar curvature implies
$$0\le R^*_n=  \frac{1}{\rho_n^2}(-n(n+1) -2n\Delta_+  v_n-n(n-1) |\nabla_+ v_n|^2) = \frac{n(n-1)}{\rho_n^2}(1-|\nabla_+ v_n|^2).$$
From this, it follows that $|\nabla^*_n \rho_n |^2 = \rho_n^2 |\nabla^*_n v_n|^2 =  |\nabla_+ v_n|^2 \le 1$. Additionally, note that  $0 = w_n(x, 0) = \lim_{y \rightarrow 0} (1-\frac{\rho_n(x, y)}{y})$, which is equivalent to $\lim_{y \rightarrow 0} \frac{\rho_n(x, y)}{y} = 1$.

Furthermore, the non-negativity of $R^*_n $ leads to the inequality $\Delta w_n \ge 0$. Thus, we can follow the same approach as in Theorem \ref{thm:3} to deduce $w_n \equiv 0$.

\end{proof}

\begin{corollary} 
Let $s \coloneqq \frac{n}{2} + \gamma$ for some $\gamma \in (\frac{n}{2}, \frac{n}{2}+1]$. Suppose $v_s$ is a positive solution to the PDE $-\Delta_+ v_s -s(n-s)v_s = 0$  on $\pR$. Define $w_s \coloneqq 1-\frac{v_s}{y^{n-s}}$. Assume the following conditions:

(1) $w_s \in \tilde{H}^{1,0}(\pR)$.

(2) $R^*_s $ is non-negative.

Then, $v_s = y^{n-s}$ or equivalently, $g_s^* \equiv |dz|^2$.
\end{corollary}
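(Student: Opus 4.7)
The proof plan is to mirror the Fefferman--Graham corollary just above and reduce everything to Theorem~\ref{thm:3}. For $\gamma \in (n/2,n/2+1]$ one has $s = n/2 + \gamma > n \ge \tfrac{n+1}{2}$, so the prefactor in the identity
\[ R_s^* \;=\; 2n\Bigl(s - \tfrac{n+1}{2}\Bigr)\,\frac{1-|\nablas \rho_s|^2}{\rho_s^2} \]
is strictly positive. Consequently the hypothesis $R_s^* \ge 0$ is equivalent to $|\nablas \rho_s| \le 1$, which furnishes condition~(4) of Theorem~\ref{thm:3}.

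Next I would check the boundary data. Since $0 \in (-1,1)$ and $w_s \in \tilde{H}^{1,0}(\pR)$, Proposition~\ref{lemma:trace} gives $w_s \equiv 0$ on $\oR$. Writing $\rho_s = v_s^{1/(n-s)} = y(1-w_s)^{1/(n-s)}$ (note $n-s < 0$), this translates immediately into $\rho_s(x,0) = 0$ and $\lim_{y\to 0}\rho_s/y = 1$, which is condition~(2) of Theorem~\ref{thm:3}.

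For condition~(1), interior smoothness of $\rho_s$ on $\pR$ is automatic from standard elliptic regularity applied to $-\Delta_+ v_s = s(n-s) v_s$ together with positivity of $v_s$. Up to the boundary, since $1-w_s$ is continuous and equals $1$ on $\oR$, the factorization $\rho_s = y(1-w_s)^{1/(n-s)}$ reduces the matter to showing $w_s \in C^2(\pbR)$ for the degenerate equation $\Delta_a w_s = 0$ with $a = 1-2\gamma$. Once this regularity is in place all four hypotheses of Theorem~\ref{thm:3} hold, and I would conclude $\rho_s \equiv y$, i.e., $v_s = y^{n-s}$.

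The main obstacle is this boundary regularity step: for $\gamma \in (n/2, n/2+1]$ one has $a = 1-2\gamma < 1-n$, so $y^a$ falls outside the Muckenhoupt $\cA_2$ class that underlies most of the weighted analysis used earlier in the paper. The natural remedy is the one alluded to after Theorem~\ref{stv2}: choosing $m$ with $m < \gamma < m+1$ and $b = 2m+1-2\gamma \in (-1,1)$ recasts $\Delta_a w_s = 0$ as the $\cA_2$-weighted polyharmonic equation $\Delta_b^{m+1} w_s = 0$, to which Yang's theory applies and from which the desired $C^2(\pbR)$ regularity of $w_s$ can be extracted.
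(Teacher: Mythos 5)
Your first step is exactly the paper's: for $\gamma\in(n/2,n/2+1]$ one has $s>n>\tfrac{n+1}{2}$, so the prefactor in $R_s^*=2n(s-\tfrac{n+1}{2})\rho_s^{-2}(1-|\nablas\rho_s|^2)$ is positive and $R_s^*\ge 0$ forces $|\nablas\rho_s|\le 1$. That part is fine.

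Where you and the paper diverge is what to do with this information. You try to invoke Theorem~\ref{thm:3} as a black box, which requires $\rho_s\in C^2(\pbR)$ up to the boundary, and you correctly notice that this regularity is not provided by the hypothesis $w_s\in\tilde H^{1,0}(\pR)$. But your proposed remedy---extracting $C^2(\pbR)$ regularity from Yang's reformulation $\Delta_b^{m+1}w_s=0$ with $b=2m+1-2\gamma\in(-1,1)$---is both a genuine gap and an unnecessary detour. You never argue why Yang's machinery should deliver classical second-order boundary regularity; as it stands it is a hope, not a proof, and the whole difficulty you flagged remains open in your write-up.

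The paper avoids this entirely by following the \emph{proof} of Theorem~\ref{thm:3} rather than its statement. Interior smoothness of $v_s$ (hence of $\rho_s$) comes for free from the elliptic equation; $|\nablas\rho_s|\le 1$ gives $\partial_y(\rho_s/y)\le 0$ in $\pR$, which combined with the Sobolev trace condition $w_s|_{\oR}=0$ (via absolute continuity on lines) yields $0<\rho_s/y\le 1$; and $R_s^*\ge 0$ is equivalent to $\Delta\log(\rho_s/y)+\tfrac{n-1}{2}|\nabla\log(\rho_s/y)|^2\le 0$, i.e. to weak superharmonicity of $(\rho_s/y)^{(n-1)/2}$. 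One then applies Theorem~\ref{thm:liouville} to $\tilde w:=1-(\rho_s/y)^{(n-1)/2}$, which is nonnegative, bounded by $1$, weakly subharmonic, and lies in $\tilde H^{1,0}(\pR)$: writing $u=\rho_s/y$ and using $0<u\le 1$ and $s>\tfrac{n+1}{2}$, one has $|\nabla\tilde w|=\tfrac{(n-1)/2}{\,|n-s|\,}\,u^{\,s-(n+1)/2}\,|\nabla w_s|\le C|\nabla w_s|$, so $\tilde H^{1,0}$ membership is inherited from the hypothesis on $w_s$. Theorem~\ref{thm:liouville} needs only this weak-Sobolev regularity, not $C^2(\pbR)$. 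So the fix is not to upgrade regularity but to descend to the weak level: replace ``apply Theorem~\ref{thm:3}'' by ``run Theorem~\ref{thm:3}'s argument and close it with Theorem~\ref{thm:liouville} applied to $1-(\rho_s/y)^{(n-1)/2}$''.

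Two smaller points: in your factorization $\rho_s=y(1-w_s)^{1/(n-s)}$ note that $1/(n-s)<0$, so the bound $\rho_s\le y$ corresponds to $w_s\le 0$ here (the sign of $w_s$ flips relative to the $s<n$ regime); and the boundary limit $\lim_{y\to 0}\rho_s/y=1$ should be justified via the monotonicity $\partial_y(\rho_s/y)\le 0$ together with the a.e.\ trace, rather than asserted directly from Proposition~\ref{lemma:trace}.
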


\begin{proof}
The identity $R_s^*  = 2n(s- \frac{n+1}{2}) \cdot \frac{(1-|\nablas \rho_s|^2)}{\rho_s^2}$ implies that $|\nablas \rho_s|\le 1$. The remainder of the proof follows similarly to the previous corollary.
\end{proof}

\section{Cacciopoli-type inequalities}

In this section, we establish an upper bound on the $y^a$-weighted Sobolev norm for solutions $w_s$ that appear in Theorem \ref{thm:2}, specifically in a CCE setting. To achieve this, we will prove some Cacciopoli-type inequalities. Conceptually, such upper bound is essential for demonstrating the existence of a limit function in the weighted Sobolev space, especially for a sequence of functions $(w_s)_i$ defined on a sequence of CCE manifolds $(X_i, M_i, (g_+)_i)$ during a blow-up analysis. We articulate this bound in terms of geometric quantities:  namely, the $y^a$-weighted volume of geodesic balls and the $L^1$-norm of the  fractional curvature $Q_{2\gamma}$ along the boundary.

Recall that on a CCE manifold $(X, M, g_+)$, the metric can be expressed as $g_+ = \frac{1}{y^2}(dy^2 + g_y)$ where $y$ is the geodesic defining function and $g_0 = h$. Note that $y$ was also used as a geodesic defining function in the hyperbolic space $\pR$.  The function $y$ is defined on a collar neighborhood $\{y<\epsilon\} \simeq M\times [0, \epsilon)$ for some positive number $\epsilon$. 

In this section, we continue to work under the assumption that the scalar curvature $R_h$ of the manifold $(M, h)$ is positive. We define $g \coloneqq y^2 g_+$ and denote by $\nabla$, $R$ and $Ric$ the covariant derivative, scalar curvature, and Ricci curvature, respectively, with respect to the metric $g$.

Based on the Theorem \ref{lemma:sign}, we know that $|\nabla_s \rho_s| \le 1$. Let's define $w_s = 1 - \frac{v_s}{y^{n-s}}$, as before.  Let's prove some basic observations about the region $M\times [0, \epsilon)$.

\begin{lemma}\label{lemma:ydirection1}
For $y<\epsilon$, the derivative $\partial_y w_s$ is non-negative, and $0\le w_s (x, y) \le 1$.
\end{lemma}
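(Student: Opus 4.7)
The plan is to transcribe the argument of Lemma \ref{lemma:ydirection} to the collar neighborhood $\{y<\epsilon\}\simeq M\times[0,\epsilon)$ of the CCE manifold. In the geodesic gauge $g_+ = y^{-2}(dy^2 + g_y)$, the metric $g = y^2 g_+ = dy^2 + g_y$ is a warped product, so $\partial_y$ is a $g$-unit vector field; this is the structural analog of the Euclidean half-space situation, and it allows the same chain of one-dimensional inequalities to go through.

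First, I would convert the bound $|\nablas \rho_s|\le 1$ furnished by Theorem \ref{lemma:sign} into a pointwise inequality on $\partial_y \rho_s$. Since $g_s^* = \rho_s^2 g_+ = (\rho_s/y)^2 g$, one computes
\[
|\nablas \rho_s|^2 = \Big(\frac{y}{\rho_s}\Big)^{\!2} |\nabla \rho_s|^2 \le 1,
\]
which gives $|\nabla \rho_s|\le \rho_s/y$. Because $\partial_y$ is $g$-unit in the collar, this yields $\partial_y \rho_s \le \rho_s/y$. Writing $\rho_s = v_s^{1/(n-s)}$ and differentiating then produces $\partial_y v_s \le (n-s) v_s/y$, and a direct computation of $\partial_y(1-v_s/y^{n-s})$ gives $\partial_y w_s \ge 0$ throughout $\{y<\epsilon\}$, exactly as in Lemma \ref{lemma:ydirection}.

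For the bounds $0\le w_s \le 1$, the upper estimate $w_s<1$ is immediate from positivity of $v_s$. The lower estimate combines the monotonicity $\partial_y w_s \ge 0$ with the asymptotic expansion $v_s = y^{n-s}(1+\alpha_s y^{2\gamma}+\cdots)$ recalled in Theorem \ref{thm:asym}, which forces $w_s(x,y)\to 0$ as $y\to 0$; together these imply $w_s\ge 0$ in the collar.

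I do not expect a serious obstacle: the only nontrivial input is the upgrade from the global bound $|\nablas \rho_s|\le 1$ to a pointwise inequality for $\partial_y \rho_s$, and the warped-product structure of $g$ in the geodesic gauge makes this immediate. All other steps are verbatim specializations of the proof already given in the half-space case.
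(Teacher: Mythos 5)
Your proof is correct and follows essentially the same approach as the paper: both arguments exploit that $|\nabla y|_g = 1$ in the geodesic gauge (so that Cauchy–Schwarz projects the gradient bound onto the $\partial_y$ direction), and both deduce monotonicity of $w_s$ in $y$ and then conclude $0 \le w_s \le 1$ from the boundary condition and positivity of $v_s$. The only cosmetic difference is that you transcribe the chain of inequalities through $\rho_s$ and $v_s$ exactly as in Lemma \ref{lemma:ydirection}, whereas the paper rewrites $|\nablas\rho_s|\le 1$ directly as the equivalent inequality $|\nabla w_s|^2 \le 2(n-s)(1-w_s)\frac{\partial_y w_s}{y}$ in terms of $w_s$; the two are algebraically identical.
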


\begin{proof}
By a direct computation, we observe that the inequality $|\nabla_s \rho_s| \le 1$ is equivalent to 
$$|\nabla w_s|^2 \le 2(n-s)(1-w_s) \frac{\partial_y w_s}{y}.$$
Given that $|\nabla y|^2 = 1$, it follows that $|\nabla w_s|^2 \ge |\nabla w_s \cdot \nabla y |^2 = (\partial_y w_s)^2$. This results in the inequality $\partial _y w_s \ge 0$. Additionally, since $w_s \equiv 0$ on $M$, and $v_s > 0$, it is clear that $0\le w_s (x, y) \le 1$ for $y<\epsilon$. 
\end{proof}

\begin{lemma}\label{lemma:Rpositive}
The scalar curvature $R$ of the metric $g$ is positive in the collar neighborhood $M\times[0, \epsilon)$.
\end{lemma}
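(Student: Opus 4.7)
The plan is to reduce the question to a pointwise calculation of $R$ on $M$ and then extend positivity into the collar by continuity and compactness. The identity $g_+ = y^{-2} g$ together with the Einstein condition $R_{g_+} = -n(n+1)$ reduces everything to standard conformal geometry and a Fefferman--Graham expansion.

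First I would apply the conformal change formula for scalar curvature in dimension $n+1$, namely $R_{g_+} = y^2(R - 2n(-\Delta_g \log y) - n(n-1)|\nabla \log y|_g^2)$, to the present situation. The geodesic slicing $g = dy^2 + g_y$ gives $|\nabla \log y|_g^2 = y^{-2}$ and
$$\Delta_g \log y = -\frac{1}{y^2} + \frac{1}{2y}\partial_y \log \det g_y,$$
so the singular $y^{-2}$ contributions cancel cleanly against $R_{g_+} = -n(n+1)$. After this cancellation one is left with the compact identity
$$R = -\frac{n}{y}\,\partial_y \log \det g_y$$
valid throughout the collar.

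Next I would use the Fefferman--Graham expansion in geodesic normal coordinates for a CCE manifold, $g_y = h + g^{(2)} y^2 + O(y^3)$, where $g^{(2)} = -P_h$ is minus the Schouten tensor of $h$. A direct trace computation gives $\text{tr}_h g^{(2)} = -R_h/(2(n-1))$, whence
$$\partial_y \log \det g_y = 2y\,\text{tr}_h g^{(2)} + O(y^3) = -\frac{R_h}{n-1}\,y + O(y^3).$$
Substituting into the identity above yields the boundary value
$$R\big|_{M} = \frac{n}{n-1}\,R_h,$$
which is strictly positive by the standing hypothesis $R_h > 0$.

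Since $M$ is compact and $R$ is continuous on $X$, the pointwise positivity of $R|_M$ extends to a collar $M \times [0, \epsilon')$, and shrinking $\epsilon$ if necessary finishes the proof. The main technical obstacle is bookkeeping signs across the conformal change formula and the Fefferman--Graham expansion, together with the mild adaptation needed in low boundary dimension (say $n=2$, where the Schouten formula needs to be stated differently); however, only the leading $y$-coefficient of $\partial_y \log \det g_y$ enters the final argument, and that coefficient is robust across dimensions.
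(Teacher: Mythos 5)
Your boundary computation is correct: the conformal change formula combined with the geodesic slicing $g = dy^2 + g_y$ does give $R = -\tfrac{n}{y}\partial_y \log\det g_y$, and the Fefferman--Graham expansion $g_y = h - P_h\, y^2 + \cdots$ then yields $R|_M = \tfrac{n}{n-1}R_h > 0$. (The paper cites this boundary value directly from \cite[Lemma 5.2]{L} rather than re-deriving it, but your derivation is a valid substitute.)

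The gap is in the extension step. You only obtain positivity on \emph{some} smaller collar $M\times[0,\epsilon')$ and then propose to ``shrink $\epsilon$''. But $\epsilon$ is the fixed constant for which the geodesic defining function is defined, introduced earlier in the section, and the lemma asserts $R>0$ on the entire collar $M\times[0,\epsilon)$; the Cacciopoli estimates that follow are stated for $r<\epsilon/2$ relative to that fixed $\epsilon$. Continuity plus compactness gives no control on how far into the collar positivity persists, so the argument as written proves a strictly weaker statement. The idea you are missing is the Bochner-type monotonicity argument: applying $\tfrac12\Delta$ to the identity $|\nabla y|^2 = 1$ and invoking Bochner's formula together with $\Delta y = -\tfrac{1}{2n}yR$ and $Ric(\nabla y,\nabla y) = \tfrac{1}{2n}R$ (valid throughout the collar, from \cite[Lemma 5.2]{L}) gives
\[
0 = |\nabla^2 y|^2 + \partial_y\Bigl(-\tfrac{1}{2n}yR\Bigr) + \tfrac{1}{2n}R = |\nabla^2 y|^2 - \tfrac{1}{2n}y\,\partial_y R,
\]
hence $\partial_y R \ge 0$ on all of $M\times[0,\epsilon)$. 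Combined with the boundary value $R|_M>0$, this monotonicity propagates positivity across the whole collar, which is exactly what the lemma requires.
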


\begin{proof}
Recall some identities from \cite[Lemma 5.2]{L}: $\Delta y = -\frac{1}{2n}yR$, $Ric(\nabla y, \nabla y) = \frac{1}{2n} R$ in the collar neighborhood $M\times[0, \epsilon)$, and $R = \frac{n}{n-1} R_h>0$ on the boundary $M$.

Applying the Laplacian on the equation $|\nabla y |^2 =1 $ and using Bochner's formula, we obtain:
\begin{align*}
0 = \frac{1}{2}\Delta(|\nabla y|^2) &= |\nabla^2 y|^2 + \nabla y \cdot \nabla(\Delta y) + Ric(\nabla y, \nabla y)\\
& = |\nabla^2 y|^2 + \partial_y (-\frac{1}{2n}yR) + \frac{1}{2n} R \\
& \ge -\frac{1}{2n} y \partial_y R
\end{align*}
From this inequality, we deduce that $\partial_y R \ge 0$. Given the scalar curvature $R_h$ of the boundary metric is positive, it follows that the scalar curvature $R$ within the collar neighborhood $M\times[0, \epsilon)$ is also positive.
\end{proof}

The PDE for $w_s$ with respect to the metric $g$ is given as follows:

\begin{equation}\label{eq}
\mathrm{div}(y^a\nabla w_s) = (n-s)\frac{(1-w_s)}{y^{2\gamma}} \Delta y
\end{equation}

Using this elliptic equation, we prove an upper bound on the weighted Sobolev norms of $w_s$ within half-balls, as required by the assumptions of Theorem \ref{thm:2}. Here, $B_r^+$ represents a geodesic ball with radius $r$, centered at a point on the boundary.

\begin{proposition} 
Let $0 < \gamma < 1$ and assume that the scalar curvature of $(M, h)$ is positive. Then, for $r < \frac{\epsilon}{2}$, we have
$$\int_{B_r^+}y^a\big( w_s^2 + |\nabla w_s|^2 \big)dV_g\le C(n, \gamma)\big((1+\frac{1}{r^2})\int_{B_{2r}^+}  y^a dV_g + \oint_{B_{2r}^+\cap M} Q_{2\gamma} dV_h \big).$$
where $C(n, \gamma)$ is a constant depending on $n$ and $\gamma$.
\end{proposition}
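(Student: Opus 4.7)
The plan is a Caccioppoli-type energy estimate based on the PDE (\ref{eq}), with a test function chosen so that the boundary trace produces precisely the $Q_{2\gamma}$ term. The first step is to rewrite the right-hand side of (\ref{eq}) using the collar identity $\Delta y = -\tfrac{yR}{2n}$ (established in the proof of Lemma \ref{lemma:Rpositive}) to obtain
\begin{equation*}
\mathrm{div}(y^{a}\nabla w_{s}) = -\tfrac{n-s}{2n}\,R(1-w_{s})\,y^{a}\qquad\text{on } B_{2r}^{+}.
\end{equation*}
Now the weight $y^{a}$ appears symmetrically on both sides, and since Lemma \ref{lemma:Rpositive} gives $R > 0$ throughout the collar and $n-s > 0$, the right-hand side carries the sign that will help below.

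Next I would fix a cutoff $\phi \in C_{c}^{\infty}(\overline{B_{2r}^{+}})$ with $\phi \equiv 1$ on $B_r^+$, $\phi \equiv 0$ on $\partial B_{2r}^{+}\setminus M$, and $|\nabla\phi|_g \le C/r$, multiply the reformulated equation by $(1-w_s)\phi^2$, integrate over $\{y\ge\delta\}\cap B_{2r}^{+}$ with respect to $dV_g$, integrate by parts, and let $\delta\to 0^+$. The spherical portion of the boundary disappears because $\phi\equiv 0$ there, while on $\{y=\delta\}$ I need the limit
\begin{equation*}
\lim_{\delta \to 0^+}\int_{\{y=\delta\}\cap B_{2r}^+}\bigl(-y^a\partial_y w_s\bigr)(1-w_s)\phi^2\, dA_g = \int_{M\cap B_{2r}^+}2\gamma\alpha_s\,\phi^2\, dV_h.
\end{equation*}
This follows from the polyhomogeneous expansion $v_s = y^{n-s}(1 + \alpha_s y^{2\gamma} + O(y^{\min(2,4\gamma)}))$ of Theorem \ref{thm:asym}: the key identity $a + (2\gamma-1)=0$ forces $\lim_{y\to 0^+} y^a\partial_y w_s = -2\gamma\alpha_s$, while the remaining pieces decay at positive rate $y^{\min(2-2\gamma,\,2\gamma)}$ since $\gamma\in(0,1)$. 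Combined with $P_{2\gamma}(1) = 2^{2\gamma}\tfrac{\Gamma(\gamma)}{\Gamma(-\gamma)}\alpha_s$ and $Q_{2\gamma} = \tfrac{2}{n-2\gamma}P_{2\gamma}(1)$, and the sign facts $\Gamma(\gamma)>0$, $\Gamma(-\gamma)<0$, $\alpha_s<0$, the boundary contribution becomes $-C_1(n,\gamma)\oint_{M\cap B_{2r}^+}Q_{2\gamma}\phi^2\, dV_h$ with $C_1(n,\gamma)>0$.

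Putting everything together yields the identity
\begin{equation*}
\int y^{a}|\nabla w_{s}|^{2}\phi^{2} - 2\int y^{a}(1-w_{s})\phi\,\nabla w_{s}\cdot\nabla\phi = -\tfrac{n-s}{2n}\int R(1-w_{s})^{2}y^{a}\phi^{2} + C_1(n,\gamma)\oint Q_{2\gamma}\phi^{2}.
\end{equation*}
Since $R > 0$, $n-s > 0$ and $(1-w_{s})^{2}\ge 0$, the scalar-curvature term on the right is non-positive and can be dropped. A Young's inequality absorbs the cross term into $\tfrac{1}{2}\int y^{a}|\nabla w_{s}|^{2}\phi^{2}$ plus $2\int y^{a}(1-w_{s})^{2}|\nabla\phi|^{2}$, and the bounds $0 \le w_s \le 1$ (Lemma \ref{lemma:ydirection1}) and $|\nabla\phi|_{g}\le C/r$ control the latter by $(C'/r^{2})\int_{B_{2r}^{+}}y^{a}\,dV_g$. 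Combining this with the trivial bound $\int_{B_{r}^{+}}y^{a}w_{s}^{2}\le\int_{B_{2r}^{+}}y^{a}\,dV_g$, again from $w_s\le 1$, yields the stated Caccioppoli inequality.

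The main obstacle is justifying the boundary limit rigorously: one must verify that $y^{a}\partial_{y}w_{s}$ converges to $-2\gamma\alpha_{s}$ uniformly on compact subsets of $M\cap B_{2r}^+$, and that the error terms of the polyhomogeneous expansion contribute nothing in the limit of the surface integral. This rests on the precise polyhomogeneous structure of $v_{s}$ from scattering theory together with the exact balance $a = 1 - 2\gamma$; once this ingredient is in place, the remaining manipulations are standard.
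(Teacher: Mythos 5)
Your proof is correct and follows essentially the same route as the paper: rewriting the RHS of (\ref{eq}) via $\Delta y = -\tfrac{yR}{2n}$ so that $y^a$ appears on both sides, testing with $(1-w_s)\phi^2$ (the paper writes $u_s\eta^2$ with $u_s=1-w_s$), dropping the nonpositive scalar-curvature bulk term, applying Young's inequality, and identifying the boundary trace with $Q_{2\gamma}$ via the polyhomogeneous expansion $u_s = 1 + \alpha_s y^{2\gamma} + O(y^2)$. The only nitpick is the stated error rate for $y^a\partial_y w_s$: since the next term in $u_s$ after $\alpha_s y^{2\gamma}$ is $O(y^2)$ (both $F$ and $G$ in the expansion of $\mathcal P(s)1$ are even in $y$), the correction is $O(y^{2-2\gamma})$ rather than $O(y^{\min(2-2\gamma,2\gamma)})$; this is still positive for $\gamma\in(0,1)$, so the limit computation is unaffected.
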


\begin{proof}
Let $u_s = 1-w_s$. It is straightforward to see from Lemma \ref{lemma:ydirection1} that $0 \le u_s \le 1$ and $u_s \equiv 1$ on $M$. The equation (\ref{eq}) and Lemma \ref{lemma:Rpositive} imply that
$$\mathrm{div}(y^a\nabla u_s) = -(n-s)\frac{u_s}{y^{2\gamma}} \Delta y =  \frac{(n-s)}{2n}y^au_s R \ge 0.$$
We test the above elliptic differential inequality with a function $\eta^2 u_s$ where $\eta$ is a Lipshitz cut-off function satisfying $\eta \equiv 1$ on $B_r$,  $\eta \equiv 0$ outside $B_{2r}$, and $|\nabla \eta|\le C/r$ for a dimensional constant $C$. We have the following inequality:
\begin{align*}
\int_{B_{2r}^+\cap\{y\ge\epsilon_1\}} y^a \eta^2 |\nabla u_s|^2 &\le \int_{B_{2r}^+\cap\{y\ge\epsilon_1\}} -2y^a\eta u_s \nabla \eta \cdot \nabla u_s - \oint_{B_{2r}^+\cap\{y = \epsilon_1\}} y^a \eta^2 u_s \frac{\partial u_s}{\partial y}.
\end{align*}
Then, by Young's inequality, we have 
$$\int_{B_{2r}^+\cap\{y\ge\epsilon_1\}} y^a \eta^2 |\nabla u_s|^2  \le 4\int_{B_{2r}^+\cap\{y\ge\epsilon_1\}} y^a |\nabla \eta|^2 u_s^2 - 2 \oint_{B_{2r}^+\cap\{y = \epsilon_1\}} y^a \eta^2 u_s \frac{\partial u_s}{\partial y}. $$

As we take the limit $\epsilon_1 \rightarrow 0$, we calculate that the boundary integral term converges  to 
$$\lim_{\epsilon_1 \rightarrow 0}  -  \oint_{B_{2r}^+\cap\{y = \epsilon_1\}} y^a \eta^2 u_s \frac{\partial u_s}{\partial y} = \oint_{B_{2r}^+\cap M} \eta^2 C_{n, \gamma} Q_{2\gamma}$$
where $C_{n, \gamma}$ is a positive constant. This proves that
$$\int_{B_r^+}y^a|\nabla w_s|^2 dV_g\le C(n, \gamma)\big(\frac{1}{r^2}\int_{B_{2r}^+}  y^a dV_g + \oint_{B_{2r}^+\cap M} Q_{2\gamma} dV_h \big).$$

\end{proof}

For the case $\frac{1}{2}\le \gamma <1$, we provide another Cacciopoli-type inequality to give an upper bound on the $W^{1,2}$-norm of $w_s$ as required by the assumptions of Theorem \ref{thm:1}. Given the conformal transformation of the scalar curvature, we have the following inequality:
\begin{equation}\label{eq:scalarequation}
\frac{4n}{n-1} \Delta (\frac{\rho_s}{y})^{\frac{n-1}{2}} = (\frac{\rho_s}{y})^{\frac{n-1}{2}} R - (\frac{\rho_s }{y})^{\frac{n+3}{2}}R_s^*  \ge - (\frac{\rho_s }{y})^{\frac{n+3}{2}}R_s^*.
\end{equation}

\begin{proposition} 
Let $\frac{1}{2} < \gamma < 1$. Assume that the scalar curvature of $(M, h)$ is positive. Then, for $r < \frac{\epsilon}{2}$
$$\int_{B_r^+} \big(  w_s^2 + |\nabla w_s|^2 \big) dV_g\le C(n, \gamma)\int_{B_{2r}^+} \big( 1 + \frac{1}{r^2} +   y^{2\gamma-2} ||\rho_s^{2-2\gamma}R_s^*||_{L^\infty(X)} \big)dV_g$$
where $C(n, \gamma)$ is a constant depending on $n$ and $\gamma$.
\end{proposition}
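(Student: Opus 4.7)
The plan is to mirror the proof of the preceding proposition but to start from the scalar-curvature identity \eqref{eq:scalarequation} rather than the weighted equation \eqref{eq}, so that the $y^a$-weight is replaced by the unweighted Riemannian measure $dV_g$. Introduce $u_s \coloneqq 1 - w_s = (\rho_s/y)^{n-s}$ and $W_s \coloneqq (\rho_s/y)^{(n-1)/2}$, and set $\alpha \coloneqq (n-1)/(n-2\gamma)$. The hypothesis $\gamma > 1/2$ gives $\alpha > 1$, and a direct computation shows $W_s = u_s^{\alpha}$, so by the chain rule
$$\nabla W_s = -\alpha\, u_s^{\alpha-1}\, \nabla w_s.$$
Theorem \ref{lemma:sign} yields $|\nablas \rho_s| \le 1$, so Lemma \ref{lemma:ydirection1} applies and gives $0 \le w_s \le 1$ and $\rho_s \le y$; in particular $u_s, W_s \in (0,1]$.

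The key step is to test \eqref{eq:scalarequation}, i.e.\ the identity $\tfrac{4n}{n-1}\Delta W_s = W_s R - (\rho_s/y)^{(n+3)/2} R_s^*$, against $\eta^2 w_s$, where $\eta$ is a Lipschitz cutoff with $\eta \equiv 1$ on $B_r^+$, $\eta \equiv 0$ off $B_{2r}^+$, and $|\nabla \eta| \le C/r$. Because $w_s$ vanishes on $M$ and $\eta$ vanishes on the spherical part of $\partial B_{2r}^+$, integration by parts produces no boundary contribution. Substituting the chain-rule formula for $\nabla W_s$ and applying Young's inequality to the cross term (absorbing half of the $|\nabla w_s|^2$ piece) leaves
$$\frac{\alpha}{2}\int \eta^2 u_s^{\alpha-1} |\nabla w_s|^2\, dV_g \le 2\alpha \int u_s^{\alpha-1} |\nabla \eta|^2 w_s^2\, dV_g + \frac{n-1}{4n}\int \eta^2 w_s \big|W_s R - (\rho_s/y)^{(n+3)/2} R_s^*\big|\, dV_g.$$

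It remains to bound each factor on the right. The term $w_s W_s R$ is controlled since $w_s, W_s \in [0,1]$ and $R$ is smooth (and positive, by Lemma \ref{lemma:Rpositive}) on the collar. For the curvature contribution, $\rho_s \le y$ together with $(n-1)/2 + 2\gamma > 0$ yields
$$(\rho_s/y)^{(n+3)/2} R_s^* = \rho_s^{(n-1)/2 + 2\gamma}\, y^{-(n+3)/2}\, \big(\rho_s^{2-2\gamma} R_s^*\big) \le \|\rho_s^{2-2\gamma} R_s^*\|_{L^\infty(X)}\, y^{2\gamma-2}.$$
On the left, since $u_s \to 1$ uniformly as $y \to 0$, one may shrink $\epsilon$ so that $u_s \ge 1/2$ throughout the collar, giving $u_s^{\alpha-1} \ge (1/2)^{\alpha-1}$ on $B_{2r}^+$. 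Combining these bounds with $|\nabla \eta| \le C/r$, $w_s \le 1$, and the trivial estimate $\int_{B_r^+} w_s^2 \le \int_{B_r^+} dV_g$ delivers the claimed inequality.

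The main obstacle is choosing the right identity: the exponent $(n-1)/2$ in $W_s = (\rho_s/y)^{(n-1)/2}$ is precisely the one that converts the conformal change of scalar curvature into an unweighted Laplace equation, so one must work with this auxiliary function instead of $w_s$ directly; but the test function should still be $w_s$ itself, with the chain rule $\nabla W_s = -\alpha u_s^{\alpha-1}\nabla w_s$ reconciling the two and a lower bound on $u_s^{\alpha-1}$ preventing the degeneracy from destroying the estimate. A secondary technicality is that the boundary integral at $M$ must genuinely vanish, which is guaranteed by $w_s|_M = 0$ combined with the expansion $w_s \sim -\alpha_s y^{2\gamma}$: the restriction $\gamma > 1/2$ makes $\nabla W_s = O(y^{2\gamma-1})$ bounded at $y = 0$ and renders $y^{2\gamma-2}$ integrable on $B_{2r}^+$.
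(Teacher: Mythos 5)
Your test function $\eta^2 w_s$ is a genuine departure from the paper's choice $\eta^2(\rho_s/y)^{(n+1)/2 - 2\gamma}$, and it introduces two obstructions that prevent you from obtaining a constant depending only on $n$ and $\gamma$. First, because $\nabla W_s$ and $\nabla w_s$ are anti-parallel (indeed $\nabla W_s = -\alpha u_s^{\alpha-1}\nabla w_s$), the main term $\alpha\int u_s^{\alpha-1}\eta^2|\nabla w_s|^2$ appears with a \emph{plus} sign after integrating by parts. Consequently, the one-sided inequality $\tfrac{4n}{n-1}\Delta W_s \ge -(\rho_s/y)^{(n+3)/2}R_s^*$ (the form the paper actually uses, dropping $W_s R \ge 0$) reduces to (nonnegative) $\ge$ (nonpositive), which is vacuous. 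You are therefore forced to keep the full identity, and this leaves $\int \eta^2 w_s\, W_s R \, dV_g$ on the right. The only estimate you propose for it is $W_s R \le \sup_{B_{2r}^+} R$, a manifold-dependent quantity; it cannot be folded into $C(n,\gamma)$, nor into the stated majorant $C(n,\gamma)\int_{B_{2r}^+}(1 + r^{-2} + y^{2\gamma-2}\|\rho_s^{2-2\gamma}R_s^*\|_\infty)$, whose volume term $\int 1$ carries a coefficient independent of the background metric.

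Second, the integration by parts leaves the residual weight $u_s^{\alpha-1} = (\rho_s/y)^{\gamma-1/2}$ on the left, which is at most $1$ but has no lower bound depending only on $n,\gamma$. Your fix, shrinking $\epsilon$ so that $1-w_s\ge 1/2$ on the collar, is manifold-dependent and shrinks the admissible range $r<\epsilon/2$, so it weakens the proposition. The paper avoids both difficulties at once: the test function $\eta^2(\rho_s/y)^{(n+1)/2-2\gamma}$ is a positive power of $\rho_s/y$, so its gradient is parallel to $\nabla W_s$ and the main term enters with a minus sign (making the one-sided inequality usable and eliminating $W_s R$ outright), while the exponent $(n+1)/2-2\gamma$ is calibrated so that the two $(\rho_s/y)$-powers multiply to give exactly $|\nabla w_s|^2$ with no leftover weight. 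Note also that with the paper's test function the boundary term from integrating by parts does not vanish identically (the test function equals $\eta^2$ on $M$); it vanishes in the limit because $\partial_y W_s = O(y^{2\gamma-1})\to 0$ for $\gamma>\tfrac12$, which is the real role of the hypothesis $\gamma>1/2$ here. Your observation that $\gamma>1/2$ controls a boundary term is correct in spirit but attached to the wrong test function, since with $\eta^2 w_s$ the boundary contribution vanishes trivially because $w_s|_M=0$.
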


\begin{proof}
From Lemma \ref{lemma:ydirection1}, we directly observe that $\rho_s \le y$ and thus $\int_{B_r^+}  w_s^2 \le  \int_{B_{r}^+}1 $. We now estimate $\int_{B_r^+} y^a|\nabla w_s|^2$. Let us denote $u_s \coloneqq \frac{\rho_s}{y}$. It follows that $\int_{B_r^+} |\nabla w_s|^2 = C\int_{B_r^+}  u_s^{2(n-s-1)}|\nabla u_s|^2$ and the inequality (\ref{eq:scalarequation}) is equivalent to $$\frac{4n}{n-1} \Delta u_s^{\frac{n-1}{2}} \ge - (u_s)^{\frac{n+3}{2}}R_s^*.$$

We test (\ref{eq:scalarequation}) with a function $\eta^2 u_s^{\frac{n+1}{2}-2\gamma}$, where $\eta$ is a Lipshitz cut-off function satisfying $\eta \equiv 1$ on $B_r$,  $\eta \equiv 0$ on $B_{2r}$, and $|\nabla \eta|\le C/r$ for a dimensional constant $C$. 
Then we have following computation:
\begin{align*}
\int_{B_{2r}^+} \eta^2  u_s^{\frac{n+1}{2}-2\gamma} \Delta u_s^{\frac{n-1}{2}} &= -C\int_{B_{2r}^+} \eta \nabla \eta u_s^{n-1-2\gamma} \nabla u_s - C \int_{B_{2r}^+}   \eta^2 |\nabla w_s|^2  \\
& \le -C(n,\gamma) \int_{B_{2r}^+} \big(   \eta^2 |\nabla w_s|^2 + C(n, \gamma) |\nabla \eta|^2 u_s^{n-2\gamma} \big),
\end{align*}
where the inequality follows from Young's inequality. Consequently, we obtain:
\begin{align*}
\int_{B_{2r}^+}   \eta^2 |\nabla w_s|^2 &\le C(n, \gamma)\int_{B_{2r}^+} \big( |\nabla \eta|^2 u_s^{n-2\gamma}  + u_s^{n+2-2\gamma}R_s^* \big) \\
& \le C(n, \gamma)\int_{B_{2r}^+}  \big( |\nabla \eta|^2 u_s^{n-2\gamma} +  u_s^{n+2-2\gamma} \rho_s^{2\gamma-2} ||\rho_s^{2-2\gamma}R_s^*||_{L^\infty(X)} \big) \\
&\le C(n, \gamma)\int_{B_{2r}^+}  \big( |\nabla \eta|^2 u_s^{n-2\gamma} +  u_s^{n} y^{2\gamma-2}  ||\rho_s^{2-2\gamma}R_s^*||_{L^\infty(X)} \big)  \\
&\le  C(n, \gamma)\int_{B_{2r}^+} \big( \frac{1}{r^2} +   y^{2\gamma-2} ||\rho_s^{2-2\gamma}R_s^*||_{L^\infty(X)} \big).
\end{align*}
This concludes the proof.
\end{proof}


\begin{thebibliography}{100}
\bibitem{A} M. Anderson, Einstein metrics with prescribed conformal infinity on 4-manifolds, Geom. Funct. Anal. 18 (2008), no.2, 305-366.


\bibitem{BH} O. Biquard and M. Herzlich, Analyse sur un demi-espace hyperbolique et poly-homog\'{e}n\'{e}it\'{e} locale, Calc. Var.  P.D.E., 51 (2014), 813-848.


\bibitem{CC} 
J. Case and S.-Y. A. Chang, On fractional GJMS operators, Comm. Pure Appl. Math. 69 (2016), 1017-1061.


\bibitem{CDLS} P. T. Chru\'{s}ciel, E. Delay, J. Lee, and D. N. Skinner,  Boundary regularity of conformally  compact Einstein metrics, J. Differential Geom. 69 (2005), no.1, 111-136.

\bibitem{CG2} S.-Y. A. Chang and M. d. M. Gonz\'{a}lez. Fractional Laplacian in conformal geometry. Adv. 
Math., 226(2):1410–1432, 2011.

\bibitem{CG} S.-Y. A. Chang and Y. Ge, Compactness of conformally compact Einstein manifolds in dimension 4,
Adv. Math. 340 (2018), 588-652.

\bibitem{CGJQ} S.-Y. A. Chang, Y. Ge, S. Jin and J. Qing, On compactness conformally compact Einstein manifolds
and uniqueness of Graham-Lee metrics, III, arXiv:2107.03075.


\bibitem{CGQ} S.-Y. A. Chang and Y. Ge,  Compactness of conformally compact Einstein 4-manifolds II,
Adv. Math. 373 (2020), 107325.

\bibitem{CLW} X. Chen, M. Lai, and F. Wang, Escobar-Yamabe compactifications for Poincar\`{e}-Einstein manifolds  and rigidity theorems, Adv. Math. 343 (2019), 16-35.


\bibitem{CS}  L. Caffarelli and L. Silvestre. An extension problem related to the fractional Laplacian. Communications in partial
differential equations 32 (2007), no. 8, 1245-1260.


\bibitem{DM} S. Dutta and M. Javaheri, Rigidity of conformally compact manifolds with the round sphere as conformal infinity, Adv. Math. 224, (2010), 525-538.



\bibitem{FG} C. Fefferman, and R.Graham, Q-curvature and Poincar\`{e} metrics, Math.
Res. Lett., 9 (2002), no. 2-3, 139-151.

\bibitem{FG2} C. Fefferman and C. R. Graham, The ambient metric, Annals of Mathematics Studies, 178. Princeton University Press, Princeton,  (2012).


\bibitem{GH} M. J. Gursky and Q. Han, Non-existence of Poincar\`{e}-Einstein manifolds with prescribed conformal infinity,  Geom. Funct. Anal. 27 (2017), no.4, 863-879.

\bibitem{GHS} M. J. Gursky, Q. Han, and S. Stolz, An invariant related to the existence of conformally compact Einstein  fillings, Trans. Amer. Math. Soc., 374 (2021), 4185-4205.

\bibitem{GL} C. R. Graham and J. Lee, Einstein metrics with prescribed conformal infinity on the ball, Adv. Math., 87 (1991), 186-225.

\bibitem{GQ} C. Guillarmou and J. Qing, Spectral characterization of Poincar´e-Einstein manifolds with infinity of positive
Yamabe type, Int. Math. Res. Not. (2010), 1720-1740.

\bibitem{GS} M. J. Gursky and G. Sz\'{e}kelyhidi A local existence result for Poincar\`{e}-Einstein metrics, Adv. Math. 361 (2020) 106–912.

\bibitem{GZ}
C. R. Graham and M. Zworski, Scattering matrix in conformal geometry, Invent. Math. 152 (2003), 89-118.

\bibitem{HL} Q. Han and F. Lin, Elliptic Partial Differential Equations, volume1. American Mathematical Soc., 2011


\bibitem{L}
J. Lee, The spectrum of an asymptotically hyperbolic Einstein manifold. Comm. Anal. Geom. 3 (1995), no. 1-2, 253–271.


\bibitem{LSQ} G. Li, J. Qing, and Y.Shi, Gap phenomena and curvature estimates for conformally compact  Einstein manifolds, Trans. Amer. Math. Soc., 369 (2017), 4385-4413.


\bibitem{MM} R. Mazzeo and R. Melrose, Meromorphic extension of the resolvent on complete spaces with asymptotically constant negative curvature, J. Funct. Anal. 113 (1991), 25-45.

\bibitem{STV1} Y. Sire, S. Terracini, and S. Vita, Liouville type theorems and regularity of solutions to degenerate or singular
problems part I: even solutions. Comm. Partial Differential Equations 46-2 (2021), 310-361.

\bibitem{STV2}  Y. Sire, S. Terracini, and S. Vita, Liouville type theorems and regularity of solutions to degenerate or singular
problems part II: odd solutions. Math. Eng. 3-1 (2021), 1-50.

\bibitem{WZ}
F. Wang and H. Zhou, A note on the compactness of Poincar\`{e}-Einstein manifolds, arXiv:2106.01704, 2021.

\bibitem{Y}
R. Yang, On higher order extensions for the fractional Laplacian, arXiv:1302.4413, preprint.

























\end{thebibliography}
\end{document}